\newcommand{\dint}{\displaystyle\int}
\theoremstyle{plain}
\newtheorem{theorem}{Theorem}[section]
\newtheorem{hy}{Assumption}[section]
\newtheorem{corollary}[theorem]{Corollary}
\newtheorem{lemma}[theorem]{Lemma}
\newtheorem{proposition}[theorem]{Proposition}
\theoremstyle{definition}
\newtheorem{definition}[theorem]{Definition}
\theoremstyle{remark}
\newtheorem{remark}[theorem]{Remark}
\numberwithin{equation}{section}
\numberwithin{theorem}{section}
\begin{document}

\title{Bridges with random length: \\ Gaussian-Markovian case}

\author{\textbf{Mohamed Erraoui}\\
Universit{\'e} Cadi Ayyad, Facult{\'e} des Sciences Semlalia,\\
 D{\'e}partement de Math{\'e}matiques, B.P. 2390, Marrakech, Maroc\\
Email: erraoui@uca.ma 
\and 
\textbf{Mohammed Louriki}\\
Universit{\'e} Cadi Ayyad, Facult{\'e} des Sciences Semlalia,\\
 D{\'e}partement de Math{\'e}matiques, B.P. 2390, Marrakech, Maroc\\
 Email: louriki.1992@gmail.com}

\maketitle

\begin{abstract}

Motivated by the Brownian bridge on random interval considered by  Bedini et al \cite{BBE}, we introduce and study Gaussian bridges with random length with special emphasis to the Markov property.  We prove that if the starting process is Markov then this property was kept by the bridge with respect to the usual augmentation of its natural filtration.  This leads us to conclude that the completed natural filtration of the bridge satisfies the usual conditions of right-continuity and completeness.  
\end{abstract}
\smallskip
\noindent 
\textbf{Keywords:} Gausssian Process, Gaussian Bridge, Markov Process, Bayes Theorem.
\\ 
\\
\textbf{MSC:} 60G15, 60G40, 60J25.
\begin{center}
\section{Introduction}
\label{Setion_1}
\end{center}

A deterministic length bridge is a stochastic process that is pinned to some fixed point at a fixed future time. 
There is extensive literature on this topic dealing with many kinds of bridges, Brownian bridge, generalized Gaussian bridge, Markov bridge, Gamma bridge. We quote \cite{A},\cite{BC}, \cite{GSV}, \cite{SY} and references therein. To our knowledge, the first work that deals with the bridge of random length is that of Bedini et al \cite{BBE}. Precisely, motivated by the problem of modeling the information concerning the default time of a financial company, the authors consider a new approach to credit
risk in which the information about the time of bankruptcy is modelled
using a Brownian bridge $\beta$ that starts at zero and is conditioned
to equal zero when the default occurs. This latter is modeled by a strictly positive random variable $\tau$. The process $\beta$ is called
Brownian bridge of random length (also information process) and is
defined as follows
\begin{equation}
\beta_{t}=W_{t}-\dfrac{t}{\tau\lor t}\,W_{\tau\lor t},\quad t\geq0,
\end{equation}
where $W$ is a Brownian motion independent of $\tau$. It should be mentioned that the process $\beta$ is given by the composition
of the two mappings $(r,t,w)\rightarrow\beta_{t}^{r}(w)$ and $(t,w)\rightarrow(\tau(w),t,\omega)$
where $\beta^{r}$, for a fixed positive real number $r$, is the Brownian bridge of length $r$. It is constructed from 
the orthogonal decomposition of $W$ with respect to $W_{r}$, and has the explicit form
\begin{equation}\label{detergaussbridge}
\beta^{r}_{t}=W_{t}-\dfrac{t}{r\lor t}\,W_{r\lor t},\quad t\geq0.
\end{equation}

In their model, the informations are carried by the process $\beta$
through two filtrations $\mathbb{F}^{P}$ and $\mathbb{F}^{\beta}$. $\mathbb{F}^{P}=\left(\mathcal{F}_{t}^{P}\right)_{t\geq0}$
is the completed natural filtration, that is $\mathcal{F}_{t}^{P}$ is
the observation of the information process $\beta$ up to time $t$
augmented with negligible sets, and $\mathbb{F}^{\beta}=\left(\mathcal{F}_{t}^{\beta}\right)_{t\geq0}$
is the smallest filtration which contains $\mathbb{F}^{P}$ and satisfies
the usual conditions of right-continuity and completeness. It is well known that the stopping time property play a key role in the modeling of default times in mathematical finance. As a first step,
the authors investigate this property for the unknown default time $\tau$ as well as basic properties of the process $\beta$ with respect to the filtration $\mathbb{F}^{P}$. Namely, the Bayes estimates,
the structure of the a posteriori distribution of $\tau$ and the Markov property of  the process $\beta $. 
In the second step, the authors focussed on the Markov property relative to $\mathbb{F}^{\beta}$. This has led to an important fact which is none other than the equality of filtrations $\mathbb{F}^{P}$ and $\mathbb{F}^{\beta}$. In other words only one filtration is used as source of informations. It is interesting to point out that the main tools used are the Gaussian and the Markov properties of the Brownian Motion $W$.
Concerning the question of predictability of $\tau$, it is worth reminding that she was considered in a separate paper by Bedini and Hinz in \cite{BH}. Using a well known fact from parabolic potential theory, they provide a sufficient condition for predictability in terms of the size of the support of its law $P_\tau$.

On the other hand, it is important to underline that in finance the Markov property is one of the most popular assumptions in most continuous-time modeling. For example, in modeling interest rate term structure, such popular models as Vasicek \cite{V}, Cox, Ingersoll, and Ross \cite{CIR} are all Markov processes. Our first object in this article is to give the definition of Gaussian bridges with random length with a degree of generality appropriate, especially the bridges for Markov processes.
To do this, we consider $X=(X_{t})_{t\geq0}$ a continuous centred  Gaussian process with positive
definite covariance function $R_X$. Using the orthogonal decomposition of Hilbert space associated to $X$ with respect to $X_r$, Gasbarra et al \cite{GSV} consider the analog of \eqref{detergaussbridge}, that is a Gaussian bridge with length $r$, for a large class of non-semimartingale processes $X$ as follows
$$  \xi_t^r(\omega)=X_t(\omega)-\dfrac{R_X(t,r\vee t)}{R_X(r\vee t,r\vee t)}X_{r\vee t}(\omega),~ t \geq 0.$$
Inspired by ideas of \cite{BBE}, we will consider similarly the Gaussian bridge of random length associated to the process $X$ by the composition
of the two mappings $(r,t,w)\rightarrow\xi_{t}^{r}(w)$ and $(t,w)\rightarrow(\tau(w),t,\omega)$ which led to
$$  \xi_t:=X_t-\dfrac{R_X(t,\tau \vee t)}{R_X(\tau \vee t,\tau \vee t)}X_{\tau \vee t},~ t \geq 0. $$ 
Firstly, we will deal with the stopping time property of $\tau$ and then we seek additional assumptions under which it becomes predictable. Moreover, we derive, for the process $\xi$, similar properties to those established for the process $\beta$ putting a special emphasis on the Markov property but this requires further study as will be seen below. We also wish to point out the effect that
 the process $\xi$ inherits the Markov property from the process $X$ through $\xi^r$.

Section 2 begins with the definition of Gaussian bridge of random length $\xi $ and some useful 
properties of $\xi^r$ which will be used throughout the paper. In Section 3 we consider the stopping time property 
as well as the question of predictability of $\tau$ with respect to the right continuous and complete filtration generated by the process $\xi$, which will be denoted by $\mathbb{F}^{\xi,c}_+$. Moreover, we give the conditional distribution of $\tau$ and $\xi_u$ given $\xi_{t}$ for $u>t>0$. In section 4 we establish the Markov property of the process $\xi$ with respect to its completed natural filtration.  As a consequence, we derive Bayesian estimates of the distribution of the default time $\tau$ given the past behaviour of the process $\xi$ up to time $t$. Section 5 deals with the Markov property of $\xi$ the Gaussian bridge of random length with respect to $\mathbb{F}^{\xi,c}_+$. \\
For convenience some notations used in the paper are introduced as follows:
For a complete probability space $(\Omega,\mathcal{F},\mathbb{P})$, $\mathcal{N}_p$ denotes the
collection of $\mathbb{P}$-null sets. If $\theta$ is a random variable, then $\mathbb{P}_{\theta}$
denote the law of $\theta$ under $\mathbb{P}$. $C\left(\mathbb{R}_+,\mathbb{R}\right)$ denotes the canonical space that is the space of continuous real-valued functions defined on $\mathbb{R}_+$, $\mathcal{C}$ the $\sigma$-algebra generated by the canonical process. If $E$ is a topological space, then the Borel $\sigma$-algebra over $E$ will be denoted by $\mathcal{B}(E)$. The characteristic function of a set $A$ is written $\mathbb{I}_{A}$. 
The symmetric difference of two sets is denoted by $\Delta$. $p(t, x, y)$, $x\in \mathbb{R}$, denotes the Gaussian density function with variance $t$ and mean $y$. $R_Y(s,t)= \text{cov} (Y_s,Y_t)$, $s,t \in \mathbb{R}_+$ is the covariance function associated to Gaussian process $Y$.
Finally for any process $Y=(Y_t,\, t\geq 0)$ on $(\Omega,\mathcal{F},\mathbb{P})$, we define by:
\begin{enumerate}
	\item[(i)] $\mathbb{F}^{Y}=\bigg(\mathcal{F}^{Y}_t:=\sigma(Y_s, s\leq t),~ t\geq 0\bigg)$ the natural filtration of the process $Y$.
	\item[(ii)] $\mathbb{F}^{Y,c}=\bigg(\mathcal{F}^{Y,c}_t:=\mathcal{F}^{Y}_t\vee \mathcal{N}_{P},\, t\geq 0\bigg)$ the completed natural filtration.
	\item[(iii)] $\mathbb{F}^{Y,c}_{+}=\bigg(\mathcal{F}^{Y,c}_{t^{+}}:=\underset{{s>t}}\bigcap\mathcal{F}^{Y,c}_{s}=\mathcal{F}^{Y}_{t^{+}}\vee \mathcal{N}_{P},\, t\geq 0\bigg)$ the smallest filtration containing $\mathbb{F}^{Y}$ and satisfying the usual hypotheses of right-continuity and completeness.
\end{enumerate}
Throughout this paper we assume the standing assumption 
\begin{hy}\label{hy1}
 $X=(X_t)_{t\geq 0}$ is a centered Gaussian process with continuous sample paths and $X_0=0$ a.s. such that:
	\begin{enumerate}
		\item[(i)]  $R_X(t,t)> 0$ for all $t>0$.
		\item[(ii)] For all $s,t>0$ such that $t\neq s$ we have $$ R_X(t,s)^2<R_X(t,t)R_X(s,s).$$
	\end{enumerate}
\end{hy} 

\begin{remark}
Assumption \eqref{hy1} (ii) means that the covariance matrix of $(X_{s},X_{t})$ is non degenerate, that is $(X_{s},X_{t})$ has a density with respect to the Lebesgue measure on $\mathbb{R}^{2}$.
\end{remark} 

 
\begin{center}
	\section{Definitions and properties}\label{sectiondefandprop}
\end{center}


	The purpose of this section is to define the Gaussian bridge of random length. To achieve this we need to recall the definition of $\xi^r$ the bridge with deterministic length $r>0$. 
	
	\begin{definition}
		Let $r\in (0,+\infty)$. The map $ \xi^r:\Omega \longmapsto C\left(\mathbb{R}_+,\mathbb{R}\right)$ defined by
		$$ \xi_t^r(\omega):=X_t(\omega)-\dfrac{R_X(t,r\vee t)}{R_X(r\vee t,r\vee t)}X_{r\vee t}(\omega),~ t \geq 0,~ \omega \in \Omega,$$
		is called Gaussian bridge of length r associated to $X$.
	\end{definition}
	The next proposition gives some properties of the centred Gaussian process $\xi^r$ for $r>0$.
	\begin{proposition} Let Assumption \eqref{hy1} be satisfied. Then
	\begin{enumerate}
	\item[(i)] $\xi^r_0=0$ a.s.
		\item[(ii)]  For any $0\leq s,t<r$, we have 
\begin{equation}\label{bridgecov}
R_{\xi^r}(s,t)=R_X(s,t) -\dfrac{R_X(s,r)R_X(t,r)}{R_X(r,r)}.
\end{equation}		
		
		\item[(iii)] The probability density  function $\varphi_{\xi_{t}^r}$ is given by
		\begin{equation}
		\varphi_{\xi_{t}^r}(x)= \left\{
		\begin{array}{lll}
		& p\left( \dfrac{A_{t,r}}{R_X(r,r)},x,0\right)  ~\text{if}~ 0<t<r ~, x \in \mathbb{R},  
		
		\\ \\ & 0 ~ \text{if} ~  r \leqslant t  ~, x \in \mathbb{R},
		\end{array}
		\right.\label{density}
		\end{equation}
		where 
		\begin{equation}
			A_{t,s}=R_X(s,s)R_X(t,t)-R_X(s,t)^2, ~\text{for all}~~ s,t\in \mathbb{R}_+.\label{equationdefofA}
		\end{equation}
	\end{enumerate}
	\end{proposition}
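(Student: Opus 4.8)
The plan is to establish the three assertions by elementary computations, the unifying observation being that, $X$ being a centred Gaussian process, every finite linear combination of the variables $(X_u)_{u\ge 0}$ is again a centred Gaussian random variable; in particular each $\xi^r_t$ is centred Gaussian, so it suffices to identify the relevant (co)variances. Part (i) is then immediate: since $r>0$ we have $r\vee 0=r$, hence $\xi^r_0=X_0-\frac{R_X(0,r)}{R_X(r,r)}X_r$, the denominator being nonzero by Assumption \eqref{hy1}(i); by Assumption \eqref{hy1} we have $X_0=0$ a.s., whence also $R_X(0,r)=\mathbb{E}[X_0X_r]=0$, and therefore $\xi^r_0=0$ a.s.

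For part (ii), fix $0\le s,t<r$, so that $r\vee s=r\vee t=r$ and $\xi^r_s=X_s-\frac{R_X(s,r)}{R_X(r,r)}X_r$, $\xi^r_t=X_t-\frac{R_X(t,r)}{R_X(r,r)}X_r$. As just noted $\xi^r$ is centred, so $R_{\xi^r}(s,t)=\mathbb{E}[\xi^r_s\xi^r_t]$; expanding by bilinearity of the covariance and using $\mathbb{E}[X_r^2]=R_X(r,r)$, the two mixed terms each contribute $-\frac{R_X(s,r)R_X(t,r)}{R_X(r,r)}$ while the quadratic term contributes $+\frac{R_X(s,r)R_X(t,r)}{R_X(r,r)}$, leaving exactly $R_{\xi^r}(s,t)=R_X(s,t)-\frac{R_X(s,r)R_X(t,r)}{R_X(r,r)}$, which is \eqref{bridgecov}. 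This is a one-line calculation.

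For part (iii), consider first $0<t<r$. By part (ii) (with $s=t$) the centred Gaussian variable $\xi^r_t$ has variance $R_{\xi^r}(t,t)=R_X(t,t)-\frac{R_X(t,r)^2}{R_X(r,r)}=\frac{A_{t,r}}{R_X(r,r)}$, with $A_{t,r}$ as in \eqref{equationdefofA}. The decisive point is that this variance is strictly positive: since $t\ne r$, Assumption \eqref{hy1}(ii) gives $R_X(t,r)^2<R_X(t,t)R_X(r,r)$, i.e.\ $A_{t,r}>0$, while $R_X(r,r)>0$ by Assumption \eqref{hy1}(i); hence the law of $\xi^r_t$ is the non-degenerate centred Gaussian with density $p\!\left(\frac{A_{t,r}}{R_X(r,r)},x,0\right)$. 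If instead $t\ge r$, then $r\vee t=t$ and $\xi^r_t=X_t-\frac{R_X(t,t)}{R_X(t,t)}X_t=0$ a.s., so the law of $\xi^r_t$ is the Dirac mass at $0$, which carries no Lebesgue density --- this is the meaning of the entry ``$0$'' in \eqref{density}. None of the steps presents a genuine difficulty; the only point worth flagging is the appeal to the non-degeneracy Assumption \eqref{hy1}(ii), which is precisely what guarantees that $\xi^r_t$ genuinely possesses a density on $0<t<r$, together with the complementary observation that the bridge collapses to the constant $0$ once $t\ge r$.
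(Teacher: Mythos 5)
Your proof is correct; the paper states this proposition without proof, treating it as a routine consequence of the definition, and your computation (bilinearity of the covariance for (i)–(ii), non-degeneracy via Assumption \eqref{hy1}(ii) for the density in (iii), and the collapse $\xi^r_t=0$ for $t\geq r$) is exactly the standard argument it implicitly relies on. Your reading of the entry ``$0$'' in \eqref{density} as the absence of a Lebesgue density for the Dirac mass at $0$ is also consistent with how the paper later uses $\varphi_{\xi^r_t}$ against the measure $\mu=\delta_0+\lambda$.
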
	
	Let us first give some light on the covariance function of $R_{\xi^r}$. It follows from Assumption \eqref{hy1} (ii) that for any $u, t>0$ with $u\neq t$, the Gaussian vector $(X_{u},X_{t})$ has a non-degenerate covariance matrix. But this is not sufficient in general to ensure that the covariance matrix of $(\xi_{t}^{r},\xi_{u}^{r})$ is invertible for all $0<t,u<r$. However  since 
	\[
\left(\begin{array}{l}
\xi_{u}^{r}\\
\\
\xi_{t}^{r}\\
\\
X_{r}
\end{array}\right)=\left(\begin{array}{lclcl}
1 &  & 0 &  & -\dfrac{R_{X}(u,r)}{R_{X}(r,r)}\\
0 &  & 1 &  & -\dfrac{R_{X}(t,r)}{R_{X}(r,r)}\\
0 &  & 0 &  & 1
\end{array}\right)\left(\begin{array}{l}
X_{u}\\
\\
X_{t}\\
\\
X_{r}
\end{array}\right)
\]
then if we assume that $\left(X_{u},X_{t},X_{r}\right)$ has an absolutely continuous density with respect to the lebesgue
measure on $\mathbb{R}^{3}$ it is simple to see that the Gaussian vector $(\xi_{t}^{r},\xi_{u}^{r})$ has a non-degenerate covariance matrix for all $0<t,u<r$.
 In this case for $t,u \in [0,r)$ such that $t\neq u$, the regular conditional law of
		$\xi_u^r$ given $\xi_t^r$ is given by: 
		\begin{equation}
		\mathbb{P}_{\xi_u^r|\xi_t^r=x}(x,dy)=p\left( \sigma_{t,u}^r,y,\mu_{t,u}^r\,x \right) dy,\label{conditionalnonmarkov}
		\end{equation}
		where 
		$$ \mu_{t,u}^r=\dfrac{R_X(u,t)R_X(r,r)-R_X(u,r)R_X(t,r)}{A_{t,r}},$$
		$$ \sigma_{t,u}^r=\dfrac{A_{u,r}}{R_X(r,r)} -\dfrac{\bigg( R_X(u,t)R_X(r,r)-R_X(u,r)R_X(t,r)\bigg) ^2}{R_X(r,r)A_{t,r} },$$
	 and $x\in \mathbb{R}$.

We now formulate a joint continuity property of the centred Gaussian process $\xi^r$.
	\begin{proposition}\label{propRcontinue}
	
	Under the assumption \eqref{hy1}, the map $(r,t)\longmapsto \xi_t^r(\omega)$ from $(0,+\infty)\times \mathbb{R}_{+}$ into $\mathbb{R}$ is continuous
			for all $\omega \in \Omega$.
		
	\end{proposition}
	\begin{proof}
		Since $X$ has a continuous sample paths then $R_X(\cdot,\cdot)$ is continuous on $(0,+\infty)\times \mathbb{R}_{+}$. Thus for all $\omega \in \Omega$, the map $(r,t)\longmapsto \xi_t^r(\omega)$ is continuous
			 as a composition of continuous maps.
	\end{proof}
	
	It is clear that the process $\xi$ is really a function of the variables $(r,t,\omega)$ and for technical reasons, it is often convenient to have some joint measurability properties.
	\begin{lemma}
Assume that Assumption \eqref{hy1} holds. Then the map $(r,t,\omega)\longmapsto \xi_t^r(\omega)$ of $\big((0,+\infty)\times \mathbb{R}_{+} \times \Omega ,\mathcal{B}\big((0,+\infty)\big)\otimes \mathcal{B}(\mathbb{R}_{+})\otimes\mathcal{F}\big)$ into $(\mathbb{R},\mathcal{B}(\mathbb{R}))$ is measurable. In particular, the t-section of
		$(r,t,\omega)\longmapsto \xi_t^r(\omega)$: $(r,\omega)\longmapsto \xi_t^r(\omega)$ is measurable with respect to the $\sigma$-algebra
		$\mathcal{B}\big((0,+\infty)\big)\otimes\mathcal{F}$, for all $t \geq 0$.
	\end{lemma}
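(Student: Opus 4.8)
The plan is to obtain the joint measurability of $(r,t,\omega)\longmapsto\xi^r_t(\omega)$ from two facts that are already available. First, by Proposition \ref{propRcontinue}, for every fixed $\omega$ the map $(r,t)\longmapsto\xi^r_t(\omega)$ is continuous on $(0,+\infty)\times\mathbb{R}_+$. Second, for every fixed $(r,t)$ the map $\omega\longmapsto\xi^r_t(\omega)$ is $\mathcal{F}$-measurable, since it is the fixed linear combination $X_t-\frac{R_X(t,r\vee t)}{R_X(r\vee t,r\vee t)}X_{r\vee t}$ of the random variables $X_t$ and $X_{r\vee t}$ (the denominator is strictly positive because $r\vee t>0$, by Assumption \eqref{hy1} (i)). A real‑valued function on a product of a separable metric space and a measurable space that is continuous in the first variable and measurable in the second is jointly measurable; I would prove this instance directly by a dyadic approximation in the variables $(r,t)$.

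Concretely, for each $n\ge 1$ I would tile $(0,+\infty)\times\mathbb{R}_+$ by the pairwise disjoint dyadic rectangles $I^n_{j,k}=\big((j-1)2^{-n},j2^{-n}\big]\times[k2^{-n},(k+1)2^{-n})$, $j\ge1$, $k\ge0$, whose union is all of $(0,+\infty)\times\mathbb{R}_+$, and set
\[
f_n(r,t,\omega)=\sum_{j\ge 1}\sum_{k\ge 0}\xi^{\,j2^{-n}}_{\,k2^{-n}}(\omega)\,\mathbb{I}_{I^n_{j,k}}(r,t).
\]
Each corner $(j2^{-n},k2^{-n})$ lies in $(0,+\infty)\times\mathbb{R}_+$, so every summand is well defined, $\mathcal{F}$-measurable in $\omega$ and Borel in $(r,t)$; hence $f_n$ is $\mathcal{B}\big((0,+\infty)\big)\otimes\mathcal{B}(\mathbb{R}_+)\otimes\mathcal{F}$-measurable. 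Since for $(r,t)\in I^n_{j,k}$ the corner $(j2^{-n},k2^{-n})$ is within $2^{-n}$ of $(r,t)$ in each coordinate, Proposition \ref{propRcontinue} gives $f_n(r,t,\omega)\to\xi^r_t(\omega)$ as $n\to\infty$ for every $(r,t,\omega)$, and a pointwise limit of measurable functions is measurable. This proves the first assertion. For the $t$-section, fix $t\ge0$: the map $(r,\omega)\longmapsto(r,t,\omega)$ from $\big((0,+\infty)\times\Omega,\mathcal{B}\big((0,+\infty)\big)\otimes\mathcal{F}\big)$ to $\big((0,+\infty)\times\mathbb{R}_+\times\Omega,\mathcal{B}\big((0,+\infty)\big)\otimes\mathcal{B}(\mathbb{R}_+)\otimes\mathcal{F}\big)$ is measurable because each of its components is, so composing it with the jointly measurable map just obtained gives the claim.

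I expect no serious obstacle; the only point demanding a little care is that the approximating corner points must be chosen inside the domain $(0,+\infty)\times\mathbb{R}_+$ and converge to $(r,t)$, which is exactly why the first coordinate is taken of the form $j2^{-n}$ with $j\ge1$. An alternative route, which avoids invoking Proposition \ref{propRcontinue}, is to write $\xi^r_t(\omega)=X_t(\omega)-c(r,t)\,X_{r\vee t}(\omega)$ with $c(r,t)=R_X(t,r\vee t)/R_X(r\vee t,r\vee t)$ continuous hence Borel, to use that the path‑continuous process $X$ is jointly measurable in $(t,\omega)$, and to note that $(r,t,\omega)\longmapsto X_{r\vee t}(\omega)$ is then jointly measurable as the composition of the continuous map $(r,t,\omega)\longmapsto(r\vee t,\omega)$ with that joint‑measurability map; the heart of this variant is the standard joint measurability of a process with continuous paths, itself proved by the same dyadic device.
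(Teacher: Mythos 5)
Your argument is correct and is essentially the paper's own proof: the authors also obtain joint measurability from the continuity of $(r,t)\mapsto\xi^r_t(\omega)$ (Proposition \ref{propRcontinue}) by writing the map as a pointwise limit of measurable functions, and you have merely made the dyadic discretization and the section argument explicit. No discrepancy to report.
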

	\begin{proof}
		Since the map $(r,t)\longmapsto \xi_t^r(\omega)$ is continuous for all $\omega \in \Omega$, then the map $(r,t,\omega)\longmapsto \xi_t^r(\omega)$ can be obtained as the pointwise limit of sequences of measurable functions. So, it is sufficient to use standard results on the passage to the limit of sequences of measurable functions.
	\end{proof}
	As a consequence we have the following corollary.
	\begin{corollary}\label{cormesurable}
		Assume that Assumption \eqref{hy1} holds. Then the map $(r,\omega)\longmapsto \xi_t^r(\omega)$ of $\big((0,+\infty)\times \Omega ,\mathcal{B}\big((0,+\infty)\big)\otimes \mathcal{F}\big)$
		into $(C\left(\mathbb{R}_+,\mathbb{R}\right), \mathcal{C})$ is measurable.
	\end{corollary}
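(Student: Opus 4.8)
The plan is to reduce the measurability of this $C(\mathbb{R}_+,\mathbb{R})$-valued map to the measurability of its one-dimensional marginals, which has already been recorded in the preceding Lemma. First I would recall the elementary but crucial fact that the $\sigma$-algebra $\mathcal{C}$ on $C(\mathbb{R}_+,\mathbb{R})$ generated by the canonical process coincides with the $\sigma$-algebra generated by the family of coordinate (evaluation) maps $\pi_t:w\longmapsto w(t)$, $t\geq 0$; moreover, by continuity of paths, the countable subfamily $\{\pi_t:t\in\mathbb{Q}_+\}$ already generates $\mathcal{C}$. Consequently $\mathcal{C}$ is precisely the initial $\sigma$-algebra rendering all the $\pi_t$ measurable, and the usual criterion for measurability into an initial $\sigma$-algebra becomes available.

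Next I would apply that criterion in the following form: a map $\Phi$ from a measurable space $(E,\mathcal{E})$ into $\big(C(\mathbb{R}_+,\mathbb{R}),\mathcal{C}\big)$ is $\mathcal{E}/\mathcal{C}$-measurable if and only if, for every $t\geq 0$, the composition $\pi_t\circ\Phi:E\longrightarrow\mathbb{R}$ is $\mathcal{E}/\mathcal{B}(\mathbb{R})$-measurable. Here $E=(0,+\infty)\times\Omega$, $\mathcal{E}=\mathcal{B}\big((0,+\infty)\big)\otimes\mathcal{F}$, and $\Phi(r,\omega)=\xi^r_{\cdot}(\omega)$; note that $\Phi$ does take values in $C(\mathbb{R}_+,\mathbb{R})$ since $t\longmapsto\xi^r_t(\omega)$ is continuous for each fixed $r$ by Proposition \ref{propRcontinue}. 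The composition $\pi_t\circ\Phi$ is nothing but the $t$-section $(r,\omega)\longmapsto\xi^r_t(\omega)$.

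Finally I would invoke the preceding Lemma, whose ``in particular'' clause states exactly that, for each $t\geq 0$, the map $(r,\omega)\longmapsto\xi^r_t(\omega)$ is $\mathcal{B}\big((0,+\infty)\big)\otimes\mathcal{F}/\mathcal{B}(\mathbb{R})$-measurable. Combining this with the criterion above gives the measurability of $(r,\omega)\longmapsto\xi^r_{\cdot}(\omega)$ into $\big(C(\mathbb{R}_+,\mathbb{R}),\mathcal{C}\big)$, as claimed. I do not expect a genuine obstacle here; the only step deserving a word of care is the identification of $\mathcal{C}$ with the $\sigma$-algebra generated by the coordinate maps, which legitimizes the ``check on coordinates'' reduction and is classical for the space of continuous functions on a separable metric domain.
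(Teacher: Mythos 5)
Your proof is correct and follows exactly the route the paper intends: the paper gives no explicit argument, simply declaring the corollary a consequence of the preceding lemma, and your reduction of $\mathcal{C}$-measurability to the measurability of the coordinate evaluations $(r,\omega)\longmapsto\xi^r_t(\omega)$ is precisely the standard step being left implicit. Nothing is missing.
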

	Thanks to the above corollary we could define another process $(\xi_{t},t\geq 0)$ by substituting $r$ by a random time $\tau$. We  now give precise definition. 
	\begin{definition}
	Let $\tau: (\Omega,\mathcal{F},\mathbb{P}) \longmapsto (0,+\infty)$ be a strictly positive random time, with distribution function $F(t) := \mathbb{P}(\tau \leq t)$, $t \geq 0$.
		The map $\xi$:$(\Omega,\mathcal{F})\longmapsto (C\left(\mathbb{R}_+,\mathbb{R}\right),\mathcal{C})$ is defined by 
		$$\xi_{t}(\omega):=\xi_{t}^{r}(\omega)\vert_{r=\tau(\omega)}~~, (t,\omega) \in \mathbb{R}_{+} \times \Omega .$$
		Then $\xi$ takes the form	
		\begin{equation}
		\xi_{t}:=X_{t}-\frac{R_X(t,\tau \vee t)}{R_X(\tau \vee t,\tau \vee t)}X_{\tau \vee t},~~ t\geq 0. \label{defxi}
		\end{equation}
	\end{definition}
	Since $\xi$ is obtained by composition of two maps $(r,t,\omega)\longmapsto \xi^r_t(\omega)$ and $(t,\omega)\longmapsto(\tau(\omega), t, \omega)$, it's not hard to verify that the map $$\xi:(\Omega,\mathcal{F})\longmapsto (C\left(\mathbb{R}_+,\mathbb{R}\right),\mathcal{C})$$ is measurable. The process $\xi$ will be called Gaussian bridge of random length $\tau$.


\begin{center}
	\section{Stopping time property and conditional law}\label{sectionstoppingtimeproperty}
\end{center}


	In this section we prove that the random time $\tau$ is a stopping time
	with respect to $\mathbb{F}^{\xi,c}_{+}$. We give the conditional distribution of the random time $\tau$ given $\xi_{t}$ as well as  the regular conditional law of $\xi_u$ given $\xi_t$. But the latter requires an added assumption on $\xi^r$. At the end we discuss the predictability property with respect to $\mathbb{F}^{\xi,c}_{+}$ under additional assumption on $X$. 
	From now on, we suppose the fundamental assumption.\\
	\begin{hy}\label{hyindependent}
		The random time $\tau$ and the Gaussian process $X$ are independent.
	\end{hy} 
	
	\begin{remark}

It is easy to see that under assumption \eqref{hyindependent}, the conditional law of the process $\xi$ given the random time $\tau$, $\mathbb{P}_{\xi\vert \tau=r}$, is none other than the law of the process $\xi^{r}$. That is, on the canonical space we have
\begin{equation}\label{condlawresptau}
\mathbb{P}_{\xi\vert \tau=r}=\mathbb{P}_{\xi^{r}}.
\end{equation}
	
	\end{remark}

	\begin{proposition}
Assume that Assumptions \eqref{hy1} and \eqref{hyindependent} hold. Then, for all $t>0$, we have $\mathbb{P}\left(\{\xi_t = 0\} \bigtriangleup \{\tau \leq t\}\right)=0$. 
		Then $\tau $ is a stopping time with respect to $\mathbb{F}^{\xi,c}$ and consequently it is a stopping time with respect to $\mathbb{F}^{\xi,c}_{+}$.
	\end{proposition}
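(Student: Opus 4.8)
The plan is to establish the measure-theoretic identity $\mathbb{P}\bigl(\{\xi_t=0\}\bigtriangleup\{\tau\le t\}\bigr)=0$ first and then read off the stopping-time property as a formal consequence. One inclusion is deterministic and needs no exceptional set: on $\{\tau\le t\}$ one has $\tau\vee t=t$, so by \eqref{defxi}
\[
\xi_t=X_t-\frac{R_X(t,t)}{R_X(t,t)}\,X_t=0,
\]
whence $\{\tau\le t\}\subseteq\{\xi_t=0\}$. Consequently $\{\xi_t=0\}\bigtriangleup\{\tau\le t\}=\{\xi_t=0\}\cap\{\tau>t\}$, and it remains only to show that this event is $\mathbb{P}$-negligible.

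For that I would condition on $\tau$. By Assumption \eqref{hyindependent} and the identity \eqref{condlawresptau}, the regular conditional law of $\xi$ given $\{\tau=r\}$ is $\mathbb{P}_{\xi^r}$; in particular the conditional law of $\xi_t$ given $\{\tau=r\}$ is the law of $\xi_t^r$. For $r>t$ the density formula \eqref{density} shows that $\xi_t^r$ is a centred Gaussian variable with variance $A_{t,r}/R_X(r,r)$, and by Assumption \eqref{hy1}(ii) together with \eqref{equationdefofA} we have $A_{t,r}=R_X(r,r)R_X(t,t)-R_X(t,r)^2>0$, so this variance is strictly positive. Hence $\mathbb{P}(\xi_t^r=0)=0$ for every $r>t$, and by disintegration
\[
\mathbb{P}\bigl(\{\xi_t=0\}\cap\{\tau>t\}\bigr)=\int_{(t,\infty)}\mathbb{P}(\xi_t^r=0)\,\mathbb{P}_\tau(dr)=0,
\]
which proves the claimed identity.

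It then remains to deduce that $\tau$ is a stopping time. For $t>0$ the random variable $\xi_t$ is $\mathcal{F}_t^\xi$-measurable, so $\{\xi_t=0\}\in\mathcal{F}_t^\xi\subseteq\mathcal{F}_t^{\xi,c}$; since $\{\tau\le t\}$ differs from $\{\xi_t=0\}$ by a $\mathbb{P}$-null set and $\mathcal{F}_t^{\xi,c}$ contains $\mathcal{N}_P$, it follows that $\{\tau\le t\}\in\mathcal{F}_t^{\xi,c}$. For $t=0$ the event $\{\tau\le 0\}$ is empty because $\tau$ is strictly positive. Thus $\{\tau\le t\}\in\mathcal{F}_t^{\xi,c}$ for every $t\ge 0$, i.e.\ $\tau$ is an $\mathbb{F}^{\xi,c}$-stopping time, and therefore also an $\mathbb{F}^{\xi,c}_{+}$-stopping time since $\mathcal{F}_t^{\xi,c}\subseteq\mathcal{F}_{t^+}^{\xi,c}$.

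The only slightly delicate point is the conditioning/disintegration step: one must know that $(r,\omega)\mapsto\xi_t^r(\omega)$ is jointly measurable (Corollary \ref{cormesurable}), so that $\{\xi_t=0\}$ is a genuine event and $r\mapsto\mathbb{P}(\xi_t^r=0)$ is a measurable function — here trivially, being identically $0$ on $(t,\infty)$ — and that the independence of $\tau$ and $X$ genuinely licenses \eqref{condlawresptau}. Everything else is immediate from the explicit Gaussian density \eqref{density} and the non-degeneracy Assumption \eqref{hy1}(ii); I expect no real obstacle beyond this measurability bookkeeping.
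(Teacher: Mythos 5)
Your proof is correct and follows essentially the same route as the paper: the deterministic inclusion $\{\tau\le t\}\subseteq\{\xi_t=0\}$, then disintegration over $\tau$ using independence and \eqref{condlawresptau} to show $\mathbb{P}(\xi_t=0,\,t<\tau)=0$ via the non-degeneracy of $\xi_t^r$ for $t<r$, and finally the stopping-time conclusion. The extra detail you supply (the explicit positivity of the variance $A_{t,r}/R_X(r,r)$ and the measurability bookkeeping) is consistent with, and slightly more careful than, the paper's own argument.
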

	\begin{proof}
		First we have from the definition of $\xi$ that  $\xi_t=0$ for $\tau \leq t$. Then $\{\tau\leq t\}\subseteq \{\xi_t=0\}$. On the other hand, using the formula of total probability and the equality \eqref{condlawresptau}, we obtain
		\begin{align*}
		\mathbb{P}(\xi_{t}=0,t<\tau)&=\dint_{(t,+\infty)} \mathbb{P}(\xi_{t}=0 | \tau=r) \mathbb{P}_{\tau}(dr)
		\\  
		&=\dint_{(t,+\infty)} \mathbb{P}(\xi_{t}^{r}=0) \mathbb{P}_{\tau}(dr) \\
		&=0.
		\end{align*}
	The latter equality uses the fact that $\xi_t^r$ is a Gaussian random variable for $0 < t < r$. 
		Thus $\mathbb{P}\left(\{\xi_t = 0\} \bigtriangleup \{\tau \leq t\}\right)=0$.
		It follows that the event $\{\tau \leq t\}$ belongs to  $\mathcal{F}_t^{\xi} \vee \mathcal{N}_P,$ for all $t \geq 0$. Hence $\tau$ is a stopping time with respect to $\mathbb{F}^{\xi,c}$ and consequently it is also a stopping time with respect to $\mathbb{F}^{\xi,c}_{+}$.
	\end{proof}

In order to determine the conditional law of the random time $\tau$ given $\xi_t$ we will use the following
	\begin{proposition}\label{propbayesestimate}
		Assume that Assumptions \eqref{hy1} and \eqref{hyindependent} hold. Let $t>0$ such that $F(t)>0$ and $g:\mathbb{R}_{+}\longrightarrow \mathbb{R} $ be a Borel function satisfying
		$\mathbb{E}[|g(\tau)|]<+\infty$. Then, $\mathbb{P}$-a.s.,
		\begin{equation}
		\mathbb{E}[g(\tau)|\xi_t]=\dint_{(0,t]}\frac{g(r)}{F(t)}\mathbb{P}_\tau(dr)\mathbb{I}_{\{\xi_t=0 \}}+\dint_{(t,+\infty)}g(r)\phi_{\xi_{t}^r}(\xi_{t})\,\mathbb{P}_{\tau}(dr)\mathbb{I}_{\{\xi_t\neq0 \}}, \label{equationbeyesestimate}
		\end{equation}
		where the function $\phi_{\xi_{t}^r}$ is defined by:
		\begin{equation}
		\phi_{\xi_{t}^r}(x)=\frac{\varphi_{\xi_{t}^r}(x)}{\dint_{(t,+\infty)}\varphi_{\xi_{t}^s}(x)\mathbb{P}_{\tau}(ds)},\quad  x\in \mathbb{R},~~r\in (t,+\infty). \label{equationphi}
		\end{equation}	
	\end{proposition}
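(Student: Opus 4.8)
The plan is to verify \eqref{equationbeyesestimate} directly from the defining property of conditional expectation. Denote by $G_t$ the random variable on the right-hand side of \eqref{equationbeyesestimate}. Since the map $(r,x)\mapsto\varphi_{\xi_t^r}(x)$ is jointly Borel measurable (it is the explicit Gaussian density \eqref{density} and $R_X$ is continuous), $G_t$ is $\sigma(\xi_t)$-measurable, so it suffices to show that $\mathbb{E}[g(\tau)\,h(\xi_t)]=\mathbb{E}[G_t\,h(\xi_t)]$ for every bounded Borel function $h:\mathbb{R}\to\mathbb{R}$.

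First I would compute the left-hand side by conditioning on $\tau$ and using Assumption \eqref{hyindependent} through \eqref{condlawresptau}: $\mathbb{E}[g(\tau)h(\xi_t)]=\int_{(0,+\infty)}g(r)\,\mathbb{E}[h(\xi_t^r)]\,\mathbb{P}_\tau(dr)$. For $r\in(0,t]$ one has $r\vee t=t$, hence $\xi_t^r\equiv 0$ and $\mathbb{E}[h(\xi_t^r)]=h(0)$, producing the term $h(0)\int_{(0,t]}g(r)\,\mathbb{P}_\tau(dr)$. For $r\in(t,+\infty)$ the variable $\xi_t^r$ is a non-degenerate centred Gaussian with density $\varphi_{\xi_t^r}$ given by \eqref{density}, so $\mathbb{E}[h(\xi_t^r)]=\int_{\mathbb{R}}h(x)\varphi_{\xi_t^r}(x)\,dx$, and Fubini's theorem (applicable since $h$ is bounded and $\varphi_{\xi_t^r}$ is a probability density) gives the term $\int_{\mathbb{R}}h(x)\big(\int_{(t,+\infty)}g(r)\varphi_{\xi_t^r}(x)\,\mathbb{P}_\tau(dr)\big)\,dx$.

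Next I would identify the law of $\xi_t$ under $\mathbb{P}$ by the same conditioning applied with $g\equiv 1$: it is the mixture $\mathbb{P}_{\xi_t}=F(t)\,\delta_0+\nu_t$, where $\nu_t(dx)=\big(\int_{(t,+\infty)}\varphi_{\xi_t^s}(x)\,\mathbb{P}_\tau(ds)\big)\,dx$ is absolutely continuous with a density finite for Lebesgue-a.e.\ $x$ (it integrates to $\mathbb{P}(\tau>t)\le 1$), and the atom at $0$ carries mass $F(t)=\mathbb{P}(\tau\le t)$ because $\{\xi_t=0\}$ and $\{\tau\le t\}$ coincide up to a $\mathbb{P}$-null set by the preceding proposition. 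Using this, I would evaluate $\mathbb{E}[G_t h(\xi_t)]$: the part of $G_t$ carrying $\mathbb{I}_{\{\xi_t=0\}}$ sees only the atom and gives $\big(\int_{(0,t]}\tfrac{g(r)}{F(t)}\mathbb{P}_\tau(dr)\big)h(0)\,\mathbb{P}(\xi_t=0)=h(0)\int_{(0,t]}g(r)\,\mathbb{P}_\tau(dr)$; the part carrying $\mathbb{I}_{\{\xi_t\neq 0\}}$ is integrated against $\nu_t$, and the defining identity \eqref{equationphi} for $\phi_{\xi_t^r}$ makes its denominator cancel exactly the density of $\nu_t$, leaving $\int_{\mathbb{R}}h(x)\big(\int_{(t,+\infty)}g(r)\varphi_{\xi_t^r}(x)\,\mathbb{P}_\tau(dr)\big)\,dx$. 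Comparing the two computations yields $\mathbb{E}[g(\tau)h(\xi_t)]=\mathbb{E}[G_t h(\xi_t)]$, which is the claim.

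The only delicate point is the mixed nature of $\mathbb{P}_{\xi_t}$ — an atom at $0$ inherited from $\{\tau\le t\}$ plus an absolutely continuous part coming from $\{\tau>t\}$ — and the matching bookkeeping of the indicators $\mathbb{I}_{\{\xi_t=0\}}$ and $\mathbb{I}_{\{\xi_t\neq 0\}}$, which guarantees in particular that $\phi_{\xi_t^r}$ is evaluated only where the denominator in \eqref{equationphi} is strictly positive; the remaining measurability and integrability checks are routine given the explicit Gaussian densities and the continuity of $R_X$.
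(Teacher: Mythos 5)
Your proof is correct and is essentially the paper's argument with the black box opened: the paper writes the conditional law of $\xi_t$ given $\tau=r$ as a density $q_t(r,x)=\mathbb{I}_{\{x=0\}}\mathbb{I}_{\{r\leq t\}}+\varphi_{\xi_t^r}(x)\mathbb{I}_{\{x\neq 0\}}\mathbb{I}_{\{t<r\}}$ with respect to the mixed measure $\mu=\delta_0+\lambda$ and then invokes the abstract Bayes formula (Shiryaev), whereas you re-derive that formula in this instance by checking the defining property of conditional expectation against the mixture $\mathbb{P}_{\xi_t}=F(t)\,\delta_0+\nu_t$. The two computations coincide term for term, so this is the same approach up to citing versus reproving the Bayes theorem.
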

	\begin{proof}
		Let us consider the measure $\mu$ defined on $\mathcal{B}(\mathbb{R})$ by
		$$ \mu(dx)=\delta_0(dx)+\lambda (dx),$$ 	
		where $\delta_0(dx)$ and $\lambda (dx)$ are the Dirac measure and the Lebesgue measure on $\mathcal{B}(\mathbb{R})$ respectively. Then for any $B\in \mathcal{B}(\mathbb{R})$ we have   
		$$\mathbb{P}(\xi_{t}\in B|\tau =r)=\mathbb{P}(\xi_{t}^{r}\in B)=\dint_{B}q_{t}(r,x)\mu(dx),$$
		where the function $q_{t}$ is a nonnegative and measurable in the two variables jointly given by $$q_{t}(r,x)=\mathbb{I}_{\{x=0 \}}\mathbb{I}_{\{r\leq t \}}+\varphi_{\xi_{t}^r}(x)\mathbb{I}_{\{x\neq 0 \}}\mathbb{I}_{\{t<r \}}.$$ It follows from Bayes formula (see  \cite{S}  p. 272)
		that $\mathbb{P}$-a.s.:
		\begin{align*}
		\mathbb{E}[g(\tau)|\xi_t]&=\frac{\dint_{(0,+\infty)}g(r)q_{t}(r,\xi_t)\mathbb{P}_{\tau}(dr)}{\dint_{(0,+\infty)}q_{t}(r,\xi_t)\mathbb{P}_{\tau}(dr)}\\
		&=\frac{\dint_{(0,t]}g(r)\mathbb{P}_{\tau}(dr)\mathbb{I}_{\{\xi_t=0 \}}+\dint_{(t,+\infty)}g(r)\varphi_{\xi_{t}^r}(\xi_{t})\mathbb{P}_{\tau}(dr)\mathbb{I}_{\{\xi_t\neq 0 \}}}{F(t)\mathbb{I}_{\{\xi_t=0 \}}+\dint_{(t,+\infty)}\varphi_{\xi_{t}^r}(\xi_{t})\mathbb{P}_{\tau}(dr)\mathbb{I}_{\{\xi_t\neq 0 \}}}\\
		&=\dint_{(0,t]}\frac{g(r)}{F(t)}\mathbb{P}_\tau(dr)\mathbb{I}_{\{\xi_t=0 \}}+\dint_{(t,+\infty)}g(r)\phi_{\xi_{t}^r}(\xi_{t})\mathbb{P}_{\tau}(dr)\mathbb{I}_{\{\xi_t\neq0 \}}.
		\end{align*}
	\end{proof}
\begin{remark}
Since $\mathbb{I}_{\{\xi_t=0\}}=\mathbb{I}_{\{\tau\leq t\}}$ $\mathbb{P}$-a.s we can rewrite formula \eqref{equationbeyesestimate} in the following form
\begin{equation}
		\mathbb{E}[g(\tau)|\xi_t]=\dint_{(0,t]}\frac{g(r)}{F(t)}\mathbb{P}_\tau(dr)\mathbb{I}_{\{\tau\leq t \}}+\dint_{(t,+\infty)}g(r)\phi_{\xi_{t}^r}(\xi_{t})\mathbb{P}_{\tau}(dr)\mathbb{I}_{\{t< \tau \}}. \label{equationbeyesestimaterec}
		\end{equation}

\end{remark}

\begin{corollary}

Assume that Assumptions \eqref{hy1} and \eqref{hyindependent} hold. Then the conditional law of the random time $\tau$ given $\xi_t$ is given by 
\begin{equation}\label{taucondlaw}
\mathbb{P}_{\tau|\xi_t=x}(x,dr)=\frac{1}{F(t)}\mathbb{I}_{(0,t]}(r)\mathbb{P}_\tau(dr)\mathbb{I}_{\{x=0 \}}+\phi_{\xi_{t}^r}(\xi_{t})\mathbb{I}_{(t,+\infty)}(r)\mathbb{P}_{\tau}(dr)\mathbb{I}_{\{x\neq0 \}}.
\end{equation}

\end{corollary}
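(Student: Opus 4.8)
The plan is to obtain the kernel \eqref{taucondlaw} by specialising the Bayes estimate \eqref{equationbeyesestimate} of Proposition \ref{propbayesestimate} to indicator functions and then checking that the resulting kernel is a bona fide regular conditional distribution. Write $K(x,dr)$ for the right-hand side of \eqref{taucondlaw}, viewed as a kernel from $(\mathbb{R},\mathcal{B}(\mathbb{R}))$ to $\big((0,+\infty),\mathcal{B}((0,+\infty))\big)$, and recall that $F(t)>0$ is in force (if $F(t)=0$ then $\mathbb{P}(\xi_t=0)=0$, only the second term survives, and the argument is the same).

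First I would take $g=\mathbb{I}_B$ in \eqref{equationbeyesestimate} for an arbitrary $B\in\mathcal{B}((0,+\infty))$; since $\mathbb{I}_B$ is bounded the integrability hypothesis holds, and the formula reads, $\mathbb{P}$-a.s.,
\begin{align*}
\mathbb{E}\big[\mathbb{I}_B(\tau)\,\big|\,\xi_t\big]&=\frac{1}{F(t)}\int_{(0,t]}\mathbb{I}_B(r)\,\mathbb{P}_\tau(dr)\,\mathbb{I}_{\{\xi_t=0\}}\\
&\quad+\int_{(t,+\infty)}\mathbb{I}_B(r)\,\phi_{\xi_t^r}(\xi_t)\,\mathbb{P}_\tau(dr)\,\mathbb{I}_{\{\xi_t\neq0\}}\;=\;K(\xi_t,B).
\end{align*}
The map $(r,x)\mapsto\varphi_{\xi_t^r}(x)$ is jointly measurable (indeed jointly continuous on $\{r>t\}$ by continuity of $R_X$), hence so is $\phi_{\xi_t^r}(x)$, and Tonelli's theorem makes $x\mapsto K(x,B)$ measurable, while $K(\xi_t,B)$ is $\sigma(\xi_t)$-measurable. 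Thus the displayed identity is precisely $\mathbb{E}[\mathbb{I}_B(\tau)\,h(\xi_t)]=\mathbb{E}[K(\xi_t,B)\,h(\xi_t)]$ for every bounded measurable $h$.

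It then remains to verify that $K(x,\cdot)$ is a probability measure on $(0,+\infty)$ for $\mathbb{P}_{\xi_t}$-almost every $x$; the standard characterisation of regular conditional laws identifies $K$ with $\mathbb{P}_{\tau\mid\xi_t=\cdot}$ (the kernel may be redefined arbitrarily on a $\mathbb{P}_{\xi_t}$-null set). For $x=0$, $K(0,\cdot)$ has total mass $\mathbb{P}_\tau((0,t])/F(t)=1$. For $x\neq0$, $K(x,\cdot)$ has total mass $\int_{(t,+\infty)}\phi_{\xi_t^r}(x)\,\mathbb{P}_\tau(dr)$, which by the definition \eqref{equationphi} of $\phi$ equals $1$ as soon as the normalising integral $\int_{(t,+\infty)}\varphi_{\xi_t^s}(x)\,\mathbb{P}_\tau(ds)$ is finite and strictly positive. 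To see this for $\mathbb{P}_{\xi_t}$-a.e. $x\neq0$ I would make the law of $\xi_t$ explicit: since $\{\xi_t=0\}=\{\tau\le t\}$ $\mathbb{P}$-a.s. and, conditionally on $\{\tau=r>t\}$, $\xi_t$ has density $\varphi_{\xi_t^r}$, one has
\[
\mathbb{P}_{\xi_t}(dx)=F(t)\,\delta_0(dx)+\Big(\int_{(t,+\infty)}\varphi_{\xi_t^r}(x)\,\mathbb{P}_\tau(dr)\Big)\,dx ,
\]
so the normalising integral is finite for Lebesgue-a.e. $x$ (its total integral equals $1-F(t)$) and strictly positive at $\mathbb{P}_{\xi_t}$-a.e. $x\neq0$.

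The only step that is not entirely mechanical is this last verification that the normalisation in \eqref{equationphi} is well defined $\mathbb{P}_{\xi_t}$-almost everywhere, which is why I would pin down the structure (atom at $0$ plus absolutely continuous part) of the law of $\xi_t$; everything else is the routine passage from the Bayes estimate for indicator functions to the associated probability kernel.
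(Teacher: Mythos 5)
Your proposal is correct and follows exactly the route the paper intends: the corollary is stated as an immediate consequence of Proposition \ref{propbayesestimate}, obtained by taking $g=\mathbb{I}_B$ in \eqref{equationbeyesestimate}, and your additional verification that the kernel is a probability measure (via the atom-plus-density structure of $\mathbb{P}_{\xi_t}$ and the normalisation in \eqref{equationphi}) merely makes explicit what the paper leaves implicit in its Bayes-formula argument. No gap.
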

It is now possible to extend the above proposition by assuming 
			
	\begin{hy}\label{hy2}
		 For any $r>0$, the covariance matrix of $(\xi_{t}^{r},\xi_{u}^{r})$ is invertible for all $0<t,u<r$.		
\end{hy} 
We note that the above assumption is required to ensure the existence of  the regular conditional law of
		$\xi_u^r$ given $\xi_t^r$ given by \eqref{conditionalnonmarkov}.
	\begin{proposition}\label{propextensionxiu}
		Let $0<t<u$ such that $F(t)>0$ and $\mathfrak{g}$ be a bounded measurable function defined on
		$(0,+\infty)\times \mathbb{R}$. Assume that Assumptions \eqref{hy1} and \eqref{hyindependent}-\eqref{hy2} are satisfied. Then, $\mathbb{P}$-a.s., we have 
		
	\[		
\begin{array}{lll}
(i)\qquad \mathbb{E}[\mathfrak{g}(\tau,\xi_{t})|\xi_{t}] & = & \dint_{(0,t]}\frac{\mathfrak{g}(r,0)}{F(t)}\mathbb{P}_{\tau}(dr)\mathbb{I}_{\{\xi_{t}=0\}}+\dint_{(t,+\infty)}\mathfrak{g}(r,\xi_{t})\phi_{\xi_{t}^r}(\xi_{t})\mathbb{P}_{\tau}(dr)\mathbb{I}_{\{\xi_{t}\neq0\}},\\
\\
 & = & \dint_{(0,t]}\frac{\mathfrak{g}(r,0)}{F(t)}\mathbb{P}_{\tau}(dr)\mathbb{I}_{\{\tau\leq t\}}+\dint_{(t,+\infty)}\mathfrak{g}(r,\xi_{t})\phi_{\xi_{t}^r}(\xi_{t})\mathbb{P}_{\tau}(dr)\mathbb{I}_{\{t<\tau\}}.
\end{array}
\]

\[
\begin{array}{lll}
(ii)\qquad \mathbb{E}[\mathfrak{g}(\tau,\xi_{u})|\xi_{t}] & = & \dint_{(0,t]}\frac{\mathfrak{g}(r,0)}{F(t)}\mathbb{P}_{\tau}(dr)\mathbb{I}_{\{\xi_{t}=0\}}+\dint_{(t,u]}\mathfrak{g}(r,0)\phi_{\xi_{t}^r}(\xi_{t})\,\mathbb{P}_{\tau}(dr)\mathbb{I}_{\{\xi_{t}\neq0\}}\\ \\
 &  & +\dint_{(u,+\infty)}\mathfrak{G}_{t,u}(r,\xi_{t})\phi_{\xi_{t}^r}(\xi_{t})\,\mathbb{P}_{\tau}(dr)\mathbb{I}_{\{\xi_{t}\neq0\}}.\\ \\
 & = & \dint_{(0,t]}\frac{\mathfrak{g}(r,0)}{F(t)}\mathbb{P}_{\tau}(dr)\mathbb{I}_{\{\tau\leq t\}}+\dint_{(t,u]}\mathfrak{g}(r,0)\phi_{\xi_{t}^r}(\xi_{t})\,\mathbb{P}_{\tau}(dr)\mathbb{I}_{\{t<\tau\}}\\ \\
 &  & +\dint_{(u,+\infty)}\mathfrak{G}_{t,u}(r,\xi_{t})\phi_{\xi_{t}^r}(\xi_{t})\,\mathbb{P}_{\tau}(dr)\mathbb{I}_{\{t<\tau\}}.
\end{array}
\]
where the function $\mathfrak{G}_{t,u}(r,\cdot)$ is defined on $\mathbb{R}$ via \eqref{conditionalnonmarkov} by 
\begin{eqnarray}
			\mathfrak{G}_{t,u}(r,x)&:=& \mathbb{E}[\mathfrak{g}(r,\xi_{u}^{r})|\xi_{t}^{r}=x] \nonumber\\
			&=& \dint_{\mathbb{R}} \mathfrak{g}(r,y)p(\sigma_{t,u}^r,y,\mu_{t,u}^r\,x)\lambda(dy). \label{Gturx}
			\end{eqnarray}

%

	\end{proposition}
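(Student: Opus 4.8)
The plan is to prove both identities by conditioning on the random length $\tau$, which by \eqref{condlawresptau} reduces every computation to the deterministic-length bridges $\xi^{r}$, and then integrating against $\mathbb{P}_{\tau}$ and performing a Bayes-type identification exactly as in the proof of Proposition~\ref{propbayesestimate}. Part (i) is essentially a corollary of Proposition~\ref{propbayesestimate} and does not even use Assumption~\eqref{hy2}: since $\xi_{t}$ is $\sigma(\xi_{t})$-measurable, $\mathbb{E}[\mathfrak{g}(\tau,\xi_{t})\mid\xi_{t}]=H(\xi_{t})$ where, for fixed $x$, $H(x)$ is obtained by applying the Bayes estimate \eqref{equationbeyesestimate}--\eqref{equationphi} to the integrable function $r\mapsto\mathfrak{g}(r,x)$ and then substituting $x=\xi_{t}$; equivalently, one integrates $\mathfrak{g}(r,x)$ against the conditional law $\mathbb{P}_{\tau\mid\xi_{t}=x}(x,dr)$ from \eqref{taucondlaw}. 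The only delicate point there is the measurable substitution step, which is the standard fact that $\mathbb{E}[f(Y,Z)\mid Z]=\int f(y,Z)\,\kappa(Z,dy)$ when $\kappa$ is a regular conditional law of $Y$ given $Z$.

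For part (ii), the substantive one, I fix a bounded measurable $\psi$ on $\mathbb{R}$ and evaluate $\mathbb{E}[\mathfrak{g}(\tau,\xi_{u})\psi(\xi_{t})]$. Conditioning on $\tau$ and using $\mathbb{P}_{\xi\mid\tau=r}=\mathbb{P}_{\xi^{r}}$ yields
\[
\mathbb{E}\big[\mathfrak{g}(\tau,\xi_{u})\psi(\xi_{t})\big]=\int_{(0,+\infty)}\mathbb{E}\big[\mathfrak{g}(r,\xi_{u}^{r})\psi(\xi_{t}^{r})\big]\,\mathbb{P}_{\tau}(dr),
\]
and I split the $r$-integral according to the geometry of the bridge. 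For $r\le t$ one has $\xi_{t}^{r}=\xi_{u}^{r}=0$, so the inner expectation is $\mathfrak{g}(r,0)\psi(0)$. For $t<r\le u$ one has $\xi_{u}^{r}=0$ while $\xi_{t}^{r}$ is a non-degenerate Gaussian with density $\varphi_{\xi_{t}^{r}}$, so the inner expectation is $\mathfrak{g}(r,0)\int_{\mathbb{R}}\psi(x)\varphi_{\xi_{t}^{r}}(x)\,dx$. For $r>u$, Assumption~\eqref{hy2} makes the covariance matrix of $(\xi_{t}^{r},\xi_{u}^{r})$ non-degenerate, so by the regular conditional law \eqref{conditionalnonmarkov} and the definition \eqref{Gturx} of $\mathfrak{G}_{t,u}$ the inner expectation equals $\int_{\mathbb{R}}\mathfrak{G}_{t,u}(r,x)\psi(x)\varphi_{\xi_{t}^{r}}(x)\,dx$. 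An application of Fubini--Tonelli (legitimate since $\mathfrak{g},\psi$ are bounded and the Gaussian densities integrable) then puts $\mathbb{E}[\mathfrak{g}(\tau,\xi_{u})\psi(\xi_{t})]$ in the form $F(t)\,c_{0}\,\psi(0)+\int_{\mathbb{R}}\psi(x)\,h(x)\,dx$ with an explicit constant $c_{0}$ and function $h$.

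Finally I would identify this against the law of $\xi_{t}$, which by \eqref{condlawresptau} is the mixed measure $\mu_{t}(dx)=F(t)\,\delta_{0}(dx)+\big(\int_{(t,+\infty)}\varphi_{\xi_{t}^{s}}(x)\,\mathbb{P}_{\tau}(ds)\big)\,\lambda(dx)$, so that $\mathbb{E}[\Phi(\xi_{t})\psi(\xi_{t})]=F(t)\Phi(0)\psi(0)+\int_{\mathbb{R}}\Phi(x)\psi(x)\big(\int_{(t,+\infty)}\varphi_{\xi_{t}^{s}}(x)\,\mathbb{P}_{\tau}(ds)\big)\,dx$. Matching the atom at $0$ forces $\Phi(0)=\frac{1}{F(t)}\int_{(0,t]}\mathfrak{g}(r,0)\,\mathbb{P}_{\tau}(dr)$, and matching the absolutely continuous parts --- dividing by $\int_{(t,+\infty)}\varphi_{\xi_{t}^{s}}(x)\,\mathbb{P}_{\tau}(ds)$, exactly the normalisation turning $\varphi_{\xi_{t}^{r}}$ into $\phi_{\xi_{t}^{r}}$ via \eqref{equationphi} --- determines $\Phi(x)$ for $x\ne0$; since $\psi$ is arbitrary this gives $\mathbb{E}[\mathfrak{g}(\tau,\xi_{u})\mid\xi_{t}]=\Phi(\xi_{t})$, the first asserted formula, and the second form follows by replacing $\mathbb{I}_{\{\xi_{t}=0\}},\mathbb{I}_{\{\xi_{t}\ne0\}}$ with $\mathbb{I}_{\{\tau\le t\}},\mathbb{I}_{\{t<\tau\}}$ via the a.s. equality of those events established earlier. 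I expect the main obstacle to be bookkeeping --- keeping the three $r$-regimes, their indicators, and the normalisation inside $\phi_{\xi_{t}^{r}}$ on $\{\xi_{t}\ne0\}$ mutually consistent --- while the probabilistic ingredients (conditioning on $\tau$, the Gaussian conditional laws, Assumption~\eqref{hy2}) are all already in place.
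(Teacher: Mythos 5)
Your proposal is correct and follows essentially the same route as the paper: part (i) as a direct consequence of Proposition \ref{propbayesestimate}, and part (ii) by conditioning on $\tau$, splitting into the regimes $r\le t$, $t<r\le u$, $r>u$, using the regular conditional law \eqref{conditionalnonmarkov} to introduce $\mathfrak{G}_{t,u}$, and identifying the result via the Bayes normalisation \eqref{equationphi}. The only (cosmetic) difference is that the paper packages the $r>u$ regime as the identity $\mathbb{E}[\mathfrak{g}(\tau,\xi_{u})\mathbb{I}_{\{u<\tau\}}h(\xi_{t})]=\mathbb{E}[\mathfrak{G}_{t,u}(\tau,\xi_{t})\mathbb{I}_{\{u<\tau\}}h(\xi_{t})]$ and then reapplies part (i), whereas you redo the identification against the mixed law $F(t)\,\delta_{0}+\bigl(\int_{(t,+\infty)}\varphi_{\xi_{t}^{s}}(\cdot)\,\mathbb{P}_{\tau}(ds)\bigr)\lambda$ in one pass.
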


	\begin{proof}
		\begin{enumerate}
			\item[(i)] Using the fact that $\mathbb{P}_{\mathfrak{g}(\tau,\xi_{t})|\xi_t=x}=\mathbb{P}_{\mathfrak{g}(\tau,x)|\xi_t=x}$, the assertion is then  deduced from the Proposition \eqref{propbayesestimate}.
			\item[(ii)] Since $\mathbb{I}_{\{\xi_u=0\}}=\mathbb{I}_{\{\tau\leq u\}}$ $\mathbb{P}$-a.s then we have  $\mathbb{P}$-a.s.

			$$ \mathbb{E}[\mathfrak{g}(\tau,\xi_{u})|\xi_{t}]  = \mathbb{E}[\mathfrak{g}(\tau,0)\mathbb{I}_{\{\tau\leqslant u\}}|\xi_{t}]+\mathbb{E}[\mathfrak{g}(\tau,\xi_{u})\mathbb{I}_{\{u<\tau\}}|\xi_{t}].$$
			From Proposition \eqref{propbayesestimate} we obtain
			$$ \mathbb{E}[\mathfrak{g}(\tau,0)\mathbb{I}_{\{\tau\leq u\}}|\xi_{t} ]= \dint_{(0,t]}\frac{\mathfrak{g}(r,0)}{F(t)}\,\mathbb{P}_\tau(dr)\,\mathbb{I}_{\{\xi_{t}=0\}}+
			\dint_{(t,u]}\mathfrak{g}(r,0)\phi_{\xi_{t}^r}(\xi_{t})\,\mathbb{P}_{\tau}(dr)\,\mathbb{I}_{\{\xi_{t}\neq 0\}}.$$
			Now let us show that		
\begin{equation}\label{secondterm}
\mathbb{E}[\mathfrak{g}(\tau,\xi_{u})\mathbb{I}_{\{u<\tau \}}|\xi_{t}]=\dint_{(u,+\infty)}\mathfrak{G}_{t,u}(r,\xi_{t})\phi_{\xi_{t}^r}(\xi_{t})\,\mathbb{P}_{\tau}(dr)\mathbb{I}_{\{\xi_{t}\neq0\}}.
\end{equation}				
			
			Indeed for any bounded Borel function $h$ we have
			\begin{align*}
			\mathbb{E}[\mathfrak{g}(\tau,\xi_{u})\mathbb{I}_{\{u<\tau \}}h(\xi_{t})]&=\dint_{(u,+\infty)}E[\mathfrak{g}(r,\xi_{u}^{r})h(\xi_{t}^{r})]\mathbb{P}_{\tau}(dr) \\
			&=\dint_{(u,+\infty)}\mathbb{E}[\mathbb{E}[\mathfrak{g}(r,\xi_{u}^{r})|\xi_{t}^{r}]h(\xi_{t}^{r})]\mathbb{P}_{\tau}(dr)  \\
			&=\dint_{(u,+\infty)}\mathbb{E}[\mathfrak{G}_{t,u}(r,\xi_{t}^{r})h(\xi_{t}^{r})]\mathbb{P}_{\tau}(dr).
			\end{align*}
			It follows that 
			\[
\begin{array}{l}
\mathbb{E}[\mathfrak{g}(\tau,\xi_{u})\mathbb{I}_{\{u<\tau\}}h(\xi_{t})] =\mathbb{E}[\mathfrak{G}_{t,u}(\tau,\xi_{t})\mathbb{I}_{\{u<\tau\}}h(\xi_{t})].
\end{array}
\]
		Thus \eqref{secondterm} is obtained from the first case $(i)$.
This ends the proof.
	\end{enumerate}
	
		\end{proof}
		
		\begin{remark}
The results of the Proposition \eqref{propextensionxiu} remain true if we only assume that the function $\mathfrak{g}$ satisfies $\mathbb{E}[|\mathfrak{g}(\tau,\xi_{t})|]<+\infty $ and $\mathbb{E}[|\mathfrak{g}(\tau,\xi_{u})|]<+\infty $.

\end{remark}

	\begin{corollary}\label{corconditionaldistribution}
	Let $0<t<u$ and assume that the assumptions of Proposition \eqref{propextensionxiu} are verified. Then  
\begin{enumerate}
\item[1.] The conditional law of $\xi_u$ given $\xi_t$ is given by:
			$$	\mathbb{P}_{\xi_u|\xi_t=x}(x,dy)=\Bigg[  \mathbb{I}_{\{x=0\}}\mathbb{I}_{\{y=0\}}+\dint_{(t,u]}\phi_{\xi_{t}^r}(x)\mathbb{P}_{\tau}(dr)\mathbb{I}_{\{x\neq0\}}\mathbb{I}_{\{y=0\}}
			$$ $$+\dint_{(u,+\infty)}p(\sigma^r_{t,u},y,\mu_{t,u}^r\,x)\phi_{\xi_{t}^r}(x)\mathbb{P}_{\tau}(dr)\mathbb{I}_{\{x\neq0\}}\mathbb{I}_{\{y\neq0\}}\Bigg]  \mu(dy).$$
\item[2.] For any bounded measurable function $g$ defined on
		$\mathbb{R}$ we have 
	
	\begin{align}
			\mathbb{E}[g(\xi_u)|\xi_{t}]=& g(0) \left( \mathbb{I}_{\{\xi_{t}=0\}} 	+ \dint_{(t,u]}\,\phi_{\xi_{t}^{r}}(\xi_{t})\mathbb{P}_{\tau}(dr)\mathbb{I}_{\{\xi_{t}\neq0\}} \right)\nonumber  \\
	\nonumber \\	& +\dint_{(u,+\infty)}\,K_{t,u}(r,\xi_{t})\phi_{\xi_{t}^{r}}(\xi_{t})\,\mathbb{P}_{\tau}(dr)\,\mathbb{I}_{\{\xi_{t}\neq0\}} .	\label{promarkovonepar}
				\end{align}	
 where the function $K_{t,u}(r,x)$ is defined on $\mathbb{R}$ by 
\begin{eqnarray}
			K_{t,u}(r,x)&:=& \mathbb{E}[g(\xi_{u}^{r})|\xi_{t}^{r}=x] \nonumber\\
			&=& \dint_{\mathbb{R}} g(y)p(\sigma_{t,u}^r,y,\mu_{t,u}^r\,x)\lambda(dy). \label{Hcondlaw}
			\end{eqnarray}	
\end{enumerate}	


	\end{corollary}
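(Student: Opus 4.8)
The plan is to derive both assertions from Proposition \eqref{propextensionxiu}(ii) by specializing to functions that do not depend on the first variable. For assertion 2, given a bounded measurable $g:\mathbb{R}\to\mathbb{R}$, I would set $\mathfrak{g}(r,x):=g(x)$, which is a bounded measurable function on $(0,+\infty)\times\mathbb{R}$, so that Proposition \eqref{propextensionxiu}(ii) applies. With this choice $\mathfrak{g}(r,0)=g(0)$ for every $r$, and comparing \eqref{Gturx} with \eqref{Hcondlaw} gives $\mathfrak{G}_{t,u}(r,x)=K_{t,u}(r,x)$. Substituting these into the second displayed equality of Proposition \eqref{propextensionxiu}(ii) and merging the two integrals over $(0,t]$ and $(t,u]$ — both of which carry the factor $g(0)$, together with $\mathbb{I}_{\{\tau\le t\}}$ and $\mathbb{I}_{\{t<\tau\}}$ respectively, which equal $\mathbb{I}_{\{\xi_t=0\}}$ and $\mathbb{I}_{\{\xi_t\neq 0\}}$ $\mathbb{P}$-a.s. — yields exactly \eqref{promarkovonepar}.

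To read off the kernel in assertion 1, I would rewrite each term of \eqref{promarkovonepar} as an integral over a dummy variable $y$ against $\mu(dy)=\delta_0(dy)+\lambda(dy)$: namely $g(0)=\int_{\mathbb{R}}g(y)\,\delta_0(dy)$ and, by \eqref{Hcondlaw}, $K_{t,u}(r,\xi_t)=\int_{\mathbb{R}}g(y)\,p(\sigma_{t,u}^r,y,\mu_{t,u}^r\,\xi_t)\,\lambda(dy)$. Since $\delta_0$ is carried by $\{0\}$ while $p(\sigma_{t,u}^r,\cdot,\mu_{t,u}^r\,\xi_t)\,\lambda$ gives no mass to $\{0\}$, one may insert the indicators $\mathbb{I}_{\{y=0\}}$ and $\mathbb{I}_{\{y\neq0\}}$ into the respective terms without changing anything; then, $g$ being bounded and the $\mathbb{P}_\tau$-integrals being finite, Tonelli's theorem permits interchanging the $\mathbb{P}_\tau(dr)$ and $\mu(dy)$ integrations. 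Factoring $\mu(dy)$ out of \eqref{promarkovonepar} identifies the bracketed expression as a version of the $\mu$-density of $\mathbb{P}_{\xi_u|\xi_t=x}(x,\cdot)$, which is the stated formula; specializing to $g=\mathbb{I}_B$ confirms that this kernel represents the conditional expectation on every $B\in\mathcal{B}(\mathbb{R})$. Finally I would check that it is a genuine probability kernel: on $\{x=0\}$ it reduces to $\delta_0$, and on $\{x\neq0\}$ its total mass is $\int_{(t,u]}\phi_{\xi_t^r}(x)\,\mathbb{P}_\tau(dr)+\int_{(u,+\infty)}\phi_{\xi_t^r}(x)\,\mathbb{P}_\tau(dr)=\int_{(t,+\infty)}\phi_{\xi_t^r}(x)\,\mathbb{P}_\tau(dr)=1$ by the definition \eqref{equationphi} of $\phi$, together with $\int_{\mathbb{R}}p(\sigma_{t,u}^r,y,\mu_{t,u}^r\,x)\,\lambda(dy)=1$.

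The whole argument is bookkeeping resting on Proposition \eqref{propextensionxiu}; the only delicate point is the Tonelli interchange combined with the consistent insertion of the indicators $\mathbb{I}_{\{y=0\}}$ and $\mathbb{I}_{\{y\neq0\}}$ relative to the decomposition $\mu=\delta_0+\lambda$, so that the resulting kernel faithfully represents the conditional law on all Borel sets and not merely on $\{0\}$ or its complement.
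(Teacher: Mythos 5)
Your proposal is correct and matches the paper's (implicit) derivation: the corollary is stated without proof precisely because it is the specialization of Proposition \eqref{propextensionxiu}(ii) to $\mathfrak{g}(r,y)=g(y)$, which is exactly what you carry out, including the correct identification $\mathfrak{G}_{t,u}=K_{t,u}$ and the reduction of the $(0,t]$ term to $g(0)\mathbb{I}_{\{\xi_t=0\}}$ via $\mathbb{P}_\tau((0,t])/F(t)=1$. Your additional verification that the bracketed expression is a genuine probability kernel with respect to $\mu=\delta_0+\lambda$ (total mass one via $\int_{(t,+\infty)}\phi_{\xi_t^r}(x)\,\mathbb{P}_\tau(dr)=1$) is a useful sanity check the paper omits.
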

	
In the next point we discuss the predictability property of $\tau$. The idea is to see $\tau$ as a hitting time of a
continuous $\mathbb{F}_{+}^{\xi,c}$-adapted process for a closed set. This is obtained under the additional assumption:

\begin{hy}\label{hypred} Let $H\in(0,1)$.  
		 
\begin{enumerate}

\item For any compact interval $I$ in $(0+\infty)$:

\begin{description}

\item[(i)] There exists a positive and finite constant $C_{1}(H,I)$ depending only on $I$ and $H$ such that 
\[
\mathbb{E}[(X_{t}-X_{s})^{2}]\leq C_{1}(H,I)\vert t-s\vert{}^{2H},~\forall s,~t\in I.
\]
\item[(ii)] There exists a constant $C_{2}(H,I)>0$ depending only on $I$ and $H$ such that for all $s,t\in I$,
\[
Var(X_{t}\vert X_{s})\geq C_{2}(H,I)\vert t-s\vert{}^{2H}.
\]
Here $Var(X_{t}\vert X_{s})$ denotes the conditional variance of
$X_{t}$ given $X_{s}$.

\end{description}

\item For any $T>0$, the prediction martingale of $X$ defined by $\hat{X}_{t}^{T}=\mathbb{E}(X_{T}\vert\mathcal{F}_{t^{+}}^{X,c})$,
$t\in[0,T]$, has a strictly increasing bracket $\langle\hat{X}^{T}\rangle$. 

\end{enumerate}
\end{hy} 
Note that the bracket $\langle\hat{X}^{T}\rangle$ is strictly increasing
if and only if the covariance function $R_{\hat{X}^{T}}$ is positive
definite. Indeed, since $\hat{X}^{T}$ is a Gaussian martingale then
we have $R_{\hat{X}^{T}}(t,s)=var(\hat{X}_{t\wedge s}^{T})=\langle\hat{X}^{T}\rangle_{t\wedge s}$.
So, another way of stating Assumption (\ref{hypred}-2) is that the value of $X_{T}$
cannot be predicted for certain by using the information $\mathcal{F}_{t^{+}}^{X,c}$
only.

Let $\rho$ be the metric defined on $\mathbb{R}$ by $\rho(s,t)=\vert t-s\vert^{H}$.
By $B_{\rho}(r)$, we denote an open ball of radius $r$ in the metric
space $\left(\mathbb{R},\rho\right)$. For any $E\subseteq\mathbb{R}$,
the Hausdorff measure, in the metric $\rho$, of $E$ is defined by
\[
\mathcal{H}_{\rho}(E)=\underset{\delta\rightarrow0}{\lim}\inf\left\{ \underset{n\geq1}{\sum}2r_{n}:E\subseteq\underset{n\geq1}{\cup}B_{\rho}(r_{n}),\,r_{n}\leq\delta\right\} .
\]

In order to derive the predictability of $\tau$ the key ingredient
is the following estimate proved in \cite{CX} (Theorem
2.1): Let $X$ satisfying Assumption \eqref{hyindependent} and Assumption (\ref{hypred}-1), $I$ a compact interval in $(0+\infty)$ and $E\subseteq I$ is a Borel set. Then there exists a finite
constant $C(H,I)\geq1$ depending only on $I$ and $H$ such that
\begin{equation}
\mathbb{P}(X(E)\cap\{0\}\neq\varnothing)\leq C(H,I)\mathcal{H}_{\rho}(E),\label{eq:haussd}
\end{equation}
where $X(E)$ denote the range of the set E under $X$.

Let $\Gamma$ denote the support of the law $\mathbb{P}_{\tau}$ .
We now consider the two-dimensional process $Y=(Y_{t},\,t\geq0)$
given by 
\[
Y_{t}=\left[\begin{array}{c}
d(t,\Gamma)\\
\xi_{t}
\end{array}\right]
\]
 where $d(t,\Gamma):=\min\limits _{r\in\Gamma}\vert r-t\vert$. Clearly
the process $Y$ is continuous and $\mathbb{F}_{+}^{\xi}$-adapted,
then its first hitting time of the closed set $\{0\}$, $T_{0}^{Y}:=\inf\left\{ t>0:Y_{t}=0\right\} $
is predictable with respect to $\mathbb{F}_{+}^{\xi}$. On the other
hand, since $Y_{\tau}=0$ , it follows that $T_{0}^{Y}\leq\tau$ $\mathbb{P}$-almost
surely. 
\begin{theorem}
Assume that  $0\notin\Gamma$and $X$ satisfies Assumptions \eqref{hy1}, \eqref{hyindependent} and \eqref{hypred}. If $\mathcal{H}_{\rho}(\Gamma)=0$,
then $\tau$ is a predictable $\mathbb{F}_{+}^{\xi,c}$-stopping time. 
\end{theorem}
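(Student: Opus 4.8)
The plan is to prove that $\tau=T_{0}^{Y}$ $\mathbb{P}$-a.s.; since $T_{0}^{Y}$ is $\mathbb{F}_{+}^{\xi}$-predictable, hence $\mathbb{F}_{+}^{\xi,c}$-predictable, and a stopping time agreeing $\mathbb{P}$-a.s.\ with a predictable one is predictable for the complete filtration $\mathbb{F}_{+}^{\xi,c}$, this yields the theorem. We already know $Y_{\tau}=0$ a.s.\ (because $\tau\in\Gamma$ a.s.\ and $\xi_{\tau}=0$), hence $T_{0}^{Y}\leq\tau$ a.s., so it remains to show $\mathbb{P}(T_{0}^{Y}<\tau)=0$. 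On $\{T_{0}^{Y}<\tau\}$, the continuity of $Y$ and the closedness of $\{0\}$ give $Y_{T_{0}^{Y}}=0$ with $0<T_{0}^{Y}<\tau$ (note $Y_{0}\neq0$ since $0\notin\Gamma$), so there is $t^{*}\in(0,\tau)$ with $d(t^{*},\Gamma)=0$ and $\xi_{t^{*}}=0$, and $t^{*}\in\Gamma$ because $\Gamma$ is closed. Hence it suffices to prove that
\[
\mathbb{P}\big(\exists\, t\in\Gamma\cap(0,\tau):\ \xi_{t}=0\big)=0 .
\]

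I would then condition on $\tau$. For $\tau(\omega)=r$ one has $\xi_{t}(\omega)=\xi_{t}^{r}(\omega)$ for all $t<r$, and by Assumption \eqref{hyindependent} together with \eqref{condlawresptau} the conditional law of $\xi$ given $\tau=r$ is the law of $\xi^{r}$; consequently the probability above equals $\int_{(0,+\infty)}\mathbb{P}\big(\exists\, t\in\Gamma\cap(0,r):\ \xi_{t}^{r}=0\big)\,\mathbb{P}_{\tau}(dr)$, and it is enough to show that $\mathbb{P}\big(\exists\, t\in\Gamma\cap(0,r):\ \xi_{t}^{r}=0\big)=0$ for $\mathbb{P}_{\tau}$-a.e.\ $r$. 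Since $0\notin\Gamma$ and $\Gamma$ is closed, $\Gamma\subseteq[\eps_{0},+\infty)$ for some $\eps_{0}>0$, hence $\Gamma\cap(0,r)=\bigcup_{n}(\Gamma\cap I_{n})$ where $I_{n}:=[\eps_{0},\,r-\tfrac{1}{n}]$ is a compact subinterval of $(0,r)$ for all large $n$; by countable subadditivity it is enough to prove $\mathbb{P}\big(\xi^{r}(\Gamma\cap I_{n})\cap\{0\}\neq\varnothing\big)=0$ for each such $n$. Here I would apply the Chen--Xiao estimate \eqref{eq:haussd}, but to the centred continuous Gaussian process $\xi^{r}$ on the compact interval $I_{n}$: it gives $\mathbb{P}\big(\xi^{r}(\Gamma\cap I_{n})\cap\{0\}\neq\varnothing\big)\leq C(H,I_{n})\,\mathcal{H}_{\rho}(\Gamma\cap I_{n})\leq C(H,I_{n})\,\mathcal{H}_{\rho}(\Gamma)=0$, which closes the argument — provided $\xi^{r}$ is shown to satisfy Assumption \eqref{hypred}-1 on $I_{n}$.

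That verification is the core computation. Part (i) is immediate because $\xi_{t}^{r}=X_{t}-\tfrac{R_{X}(t,r)}{R_{X}(r,r)}X_{r}$ is the residual of the $L^{2}$-regression of $X_{t}$ on $X_{r}$, so $\xi_{t}^{r}-\xi_{s}^{r}$ is the corresponding residual for $X_{t}-X_{s}$ and $\mathbb{E}[(\xi_{t}^{r}-\xi_{s}^{r})^{2}]\leq\mathbb{E}[(X_{t}-X_{s})^{2}]\leq C_{1}(H,I_{n})\vert t-s\vert^{2H}$. For part (ii), since $\sigma(\xi_{s}^{r},X_{r})=\sigma(X_{s},X_{r})$ and $X_{t}-\xi_{t}^{r}$ is $\sigma(X_{r})$-measurable, conditioning on a larger $\sigma$-field gives $\mathrm{Var}(\xi_{t}^{r}\vert\xi_{s}^{r})\geq\mathrm{Var}(\xi_{t}^{r}\vert X_{s},X_{r})=\mathrm{Var}(X_{t}\vert X_{s},X_{r})$, and the chain rule for Gaussian conditional variances yields
\[
\mathrm{Var}(X_{t}\vert X_{s},X_{r})=\mathrm{Var}(X_{t}\vert X_{s})\cdot\frac{\mathrm{Var}(X_{r}\vert X_{s},X_{t})}{\mathrm{Var}(X_{r}\vert X_{s})} .
\]
Now $\mathrm{Var}(X_{t}\vert X_{s})\geq C_{2}(H,I_{n})\vert t-s\vert^{2H}$ by Assumption \eqref{hypred}-1(ii), $\mathrm{Var}(X_{r}\vert X_{s})\leq R_{X}(r,r)$, and, because $s,t\leq r-\tfrac{1}{n}$ so that $\sigma(X_{s},X_{t})\subseteq\mathcal{F}_{(r-1/n)^{+}}^{X,c}$, we have $\mathrm{Var}(X_{r}\vert X_{s},X_{t})\geq\mathrm{Var}(X_{r}\vert\mathcal{F}_{(r-1/n)^{+}}^{X,c})=\langle\hat{X}^{r}\rangle_{r}-\langle\hat{X}^{r}\rangle_{r-1/n}$, which is strictly positive by Assumption \eqref{hypred}-2. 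Combining these bounds shows that $\xi^{r}$ satisfies Assumption \eqref{hypred}-1(ii) on $I_{n}$ with a strictly positive constant, as needed.

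The delicate point, and what I would expect to be the main obstacle, is exactly this last step: the hitting estimate must be invoked for the bridge $\xi^{r}$, not for $X$, and proving strong local nondeterminism of $\xi^{r}$ on compact subintervals of $(0,r)$ is where the full force of Assumption \eqref{hypred} is used — the Hölder-type control of $X$ for the parts of the lower bound, and, crucially, the strict increase of the prediction martingale's bracket to keep $\mathrm{Var}(X_{r}\vert X_{s},X_{t})$ bounded away from $0$ as $s,t$ approach $r$. Granting it, \eqref{eq:haussd} gives $\mathbb{P}(T_{0}^{Y}<\tau)=0$, so $\tau=T_{0}^{Y}$ $\mathbb{P}$-a.s., and therefore $\tau$ is a predictable $\mathbb{F}_{+}^{\xi,c}$-stopping time.
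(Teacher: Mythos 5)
Your proposal is correct, and it follows the paper's overall architecture (reduce to $\mathbb{P}(T_{0}^{Y}<\tau)=0$, condition on $\tau=r$ via \eqref{condlawresptau}, and reduce to a hitting estimate for the deterministic-length bridge $\xi^{r}$ on compact subsets of $(0,r)$), but it diverges at the key technical step. The paper invokes the equivalence of the laws of $\xi^{r}$ and $X$ on $[0,S]$ for $S<r$ (citing \cite{GSV}, which is where Assumption (\ref{hypred}-2) enters for them), and then applies the estimate \eqref{eq:haussd} to $X$ under the equivalent measure, using that null sets are preserved. You instead apply \eqref{eq:haussd} directly to the bridge $\xi^{r}$ on $I_{n}=[\eps_{0},r-\tfrac{1}{n}]$, which requires verifying Assumption (\ref{hypred}-1) for $\xi^{r}$ itself: the upper H\"older bound via the $L^{2}$-projection interpretation of $\xi^{r}_{t}-\xi^{r}_{s}$, and the two-point local nondeterminism via the Gaussian conditional-variance factorization $\mathrm{Var}(X_{t}\vert X_{s},X_{r})=\mathrm{Var}(X_{t}\vert X_{s})\,\mathrm{Var}(X_{r}\vert X_{s},X_{t})/\mathrm{Var}(X_{r}\vert X_{s})$, with Assumption (\ref{hypred}-2) used to keep $\mathrm{Var}(X_{r}\vert X_{s},X_{t})\geq\langle\hat{X}^{r}\rangle_{r}-\langle\hat{X}^{r}\rangle_{r-1/n}>0$. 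Both arguments are sound; the paper's is shorter but leans on an external equivalence-of-laws theorem whose hypotheses it does not spell out, whereas yours is self-contained modulo the Chen--Xiao estimate \eqref{eq:haussd}, makes explicit where each half of Assumption \eqref{hypred} is used, and only costs constants that degrade (harmlessly) as $n\to\infty$ since each term is multiplied by $\mathcal{H}_{\rho}(\Gamma\cap I_{n})=0$.
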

\begin{proof}
It is sufficient to verify that $\mathbb{P}(T_{0}^{Y}<\tau)=0$. First
since $\Gamma$ is a closed subset of $(0,+\infty)$ then we have
$T_{0}^{Y}=\inf\left\{ t\in\Gamma:\xi_{t}=0\right\} $. Now taking
in account that, conditionally on $\tau=r$, the process $\xi$ has
the same law as the Gaussian bridge $\xi^{r}$ we obtain
\[
\mathbb{P}\left(T_{0}^{Y}<r\vert\tau=r\right)=\mathbb{P}\left(T_{\Gamma}^{\xi^{r}}<r\right),
\]
where $T_{\Gamma}^{\xi^{r}}:=\inf\left\{ t\in\Gamma:\xi_{t}^{r}=0\right\} $. Hence 
\begin{align*}
\mathbb{P}\left(T_{0}^{Y}<\tau\right) & =\dint_{(0,+\infty)}\mathbb{P}\left(T_{0}^{Y}<r\vert\tau=r\right)\mathbb{P}_{\tau}(dr)\\
 & =\dint_{(0,+\infty)}\mathbb{P}(T_{\Gamma}^{\xi^{r}}<r)\mathbb{P}_{\tau}(dr).
\end{align*}
To complete the proof it suffices to prove that $\mathbb{P}(T_{\Gamma}^{\xi^{r}}<r)=0$
for any $r\in\Gamma$. 

Let $r\in\Gamma$ and $0<S<r$. Since $\Gamma$ is a closed and $0\notin\Gamma$
then $$\left\{ T_{\Gamma}^{\xi^{r}}\leq S\right\} =\left\{ \xi^{r}\left(\Gamma\cap[0,S]\right)\cap\{0\}\neq\varnothing\right\}. $$
Moreover it is known that under Assumption (\ref{hypred}-2) the laws of the processes
$\xi^{r}$ and $X$ are equivalent on $[0,S]$, see \cite{GSV}.
Then there exists a probability measure $\mathbb{Q}^{S}$ equivalent
to $\mathbb{P}$ such that the law of $\left\{ \xi_{t}^{r},t\in[0,S]\right\} $
under $\mathbb{Q}^{S}$ is the same as the law of $\left\{ X_{t},t\in[0,S]\right\} $
under $\mathbb{P}$. It follows that 
\[
\mathbb{Q}\left\{ T_{\Gamma}^{\xi^{r}}\leq S\right\} =\mathbb{Q}\left\{ \xi^{r}\left(\Gamma\cap[0,S]\right)\cap\{0\}\neq\varnothing\right\} =\mathbb{P}(X(\Gamma\cap[0,S])\cap\{0\}\neq\varnothing)=0,
\]
where the last equality is a consequence of $\mathcal{H}_{\rho}(\Gamma)=0$
and estimation (\ref{eq:haussd}). Since $\mathbb{Q}^{S}$ and $\mathbb{P}$
are equivalent we obtain $\mathbb{P}\left\{ T_{\Gamma}^{\xi^{r}}\leq S\right\} =0$
for all $S<r$. Consequently we obtain $\mathbb{P}(T_{\Gamma}^{\xi^{r}}<r)=0$.
\end{proof}


	\begin{center}
		\section{Markov property and bayes estimate of the default time $\tau$}\label{sectionmarkovproperty}
	\end{center} 


The main goal, in this section, being to prove	the Markov property of the process $\xi$ with
	respect to its natural filtration $\mathbb{F}^{\xi}$. In order to reach this objective, some preliminary results have been established. We begin by proving the  Markov property of the Gaussian bridge $\xi^r$with respect to $\mathbb{F}^{\xi^r}$. Afterwards we derive two formulas giving the Bayes estimate of $\tau $ as well as the prediction of the information process $\xi$ at some time $u$ given  the $n$-coordinate of $\xi$. Once this has been done, the Markov property of $\xi$ follows quite easily. 
	 From now on we assume the following assumption:\\
	\begin{hy}\label{hymarkov}
	The Gaussian process $X=(X_s)_{s\geq0}$ is a Markov process with respect to its natural filtration $\mathbb{F}^{X}$. 
	\end{hy}
	\begin{remark}
It is convenient to recall the well-known fact: for a centred Gaussian process $Y$, the Markov property with respect to the natural filtration $\mathbb{F}^{Y}$ is characterized by 
	\begin{equation}\label{markovcaract}
		 R_Y(s,t)R_Y(t,u)=R_Y(t,t)R_Y(s,u),
	\end{equation}
for every $s<t<u$, 	see \cite{RY} exercise 1.13, p.86. Hence the covariance function $R_X$ satisfies \eqref{markovcaract}. Moreover it is a continuous strictly definite positive function on $\mathbb{R}_{+}^{*}\times \mathbb{R}_{+}^{*}$. Then Lemma 5.1.9, see \cite{MR}; p.201 (see also exercise 1.13 in \cite{RY}), tells us that $R_X$ can be expressed as:
	\begin{equation}\label{covrep}
		R_X(s,t)=\rho(\inf(s,t))\,q(\sup(s,t)),\; \text{for all} \; s,t\in \mathbb{R}_{+}^{*}.
	\end{equation}
	Here $\rho$ and $q$ are continuous strictly positive functions on $\mathbb{R}_{+}^{*}$ such that $\rho/q$ is non decreasing.
	Since $X=(X_t)_{t\geq 0}$ is a countinuous such that $X_0=0$ then we extend the functions $\rho$ and $q$ to
	$\mathbb{R}_{+}$ by setting $\rho(0)=0$ and $q(0)$ is any positive constant. Thus the representation \eqref{covrep} remains true on $\mathbb{R}_{+}\times \mathbb{R}_{+}$.
	\end{remark}
We now come to the one of the main results of this section, namely the Markov property of the Gaussian bridge $\xi^r$ with deterministic length $r$. A convenient way to prove this is to show that its covariance function $R_{\xi^r}$ satisfies \eqref{covrep}. This is the first claim of the following proposition, after which we turn our attention to a few expressions for regular conditional distribution, probability density function and finite dimensional distribution useful in handling Markov property of $\xi$.
	\begin{proposition}\label{propximarkovandcovariance}
Assume that Assumptions \eqref{hy1}, \eqref{hyindependent} and \eqref{hymarkov} hold.  Then, for any $r>0$, we have
		\begin{enumerate}
			\item[(i)] $\xi^r$ is a Markov process with respect to its completed natural filtration with the covariance function $R_{\xi^{r}}$ given by \begin{equation}\label{covrepdetbri}
		R_{\xi^{r}}(s,t)=\rho(\inf(s,t))\,\tilde{q} (\sup(s,t)),\; \text{for all} \; s,t\in [0,r],
	\end{equation}
	where $$\tilde{q} (u)=q(u)-\rho(u)\dfrac{q(r)}{\rho(r)},\, u\in [0,r].$$
			\item[(ii)]  For $t<u\in [0, r)$,  Assumption \eqref{hy2} holds. Moreover, the regular conditional distribution of
			$\xi_u^r$, given $\xi_t^r$ expressed in formula \eqref{conditionalnonmarkov} can be rewritten, for all $x\in \mathbb{R}$, as: 
			
			\begin{equation}
			\mathbb{P}_{u,t}(x,dy):=\mathbb{P}_{\xi_u^r|\xi_t^r=x}(x,dy) =p\left( \dfrac{B_{t,u}B_{u,r}}{B_{t,r}},y,\dfrac{B_{u,r}}{B_{t,r}}x\right) \lambda(dy),\label{conditionalmarkov}
			\end{equation}
			where 
			\begin{equation}\label{Bdef}
				B_{s,t}=\rho(\sup(s,t))\,q(\inf(s,t))-\rho(\inf(s,t))\,q(\sup(s,t)),
			\end{equation}
			 for all $\; s,t\in [0,r]$.
			\item[(iii)]
Since, for $0<t<r$, we have $\dfrac{A_{t,r}}{R_{X}(r,r)}=\dfrac{\rho(t)B_{t,r}}{\rho(r)}$
then the probability density function $\varphi_{\xi_{t}^{r}}$ is
transformed into the following form

\begin{eqnarray}
\varphi_{\xi_{t}^{r}}(x)& =&  p\left(\dfrac{\rho(t)\,B_{t,r}}{\rho(r)},x,0\right) \nonumber\\
\nonumber \\
   & = &\dfrac{1}{\sqrt{2\pi}}\left(\dfrac{\rho(r)}{\rho(t)\,B_{t,r}}\right)^{\frac{1}{2}}\exp\left(-\dfrac{1}{2}\dfrac{\rho(r)}{\rho(t)\,B_{t,r}}\,x^{2}\right),\ x\in\mathbb{R}.\label{phimarkdensity}
\end{eqnarray}

\item[(iv)] Using the fact that 
\[
\dfrac{\rho(r)}{\rho(t)\,B_{t,r}}-\dfrac{\rho(s)}{\rho(t)\,B_{t,s}}=\dfrac{\rho(s)q(r)-\rho(r)q(s)}{B_{t,r}B_{t,s}}
\]
 the function $\phi_{\xi_{t}^{r}}$ has the following expression

\begin{equation}\label{phinewexp}
\phi_{\xi_{t}^{r}}(x)  = \left(\dint_{(t,+\infty)}\left(\dfrac{\rho(s)B_{t,r}}{\rho(r)B_{t,s}}\right)^{\frac{1}{2}}\exp\left[\dfrac{1}{2}\,\left(\dfrac{\rho(s)q(r)-\rho(r)q(s)}{B_{t,r}B_{t,s}}\right)x^{2}\right]\mathbb{P}_{\tau}(ds)\right)^{-1},
\end{equation}
 for $x\in \mathbb{R}$ and $r\in (t,+\infty)$.

		\item[(v)] If  $0< t_1 <\ldots< t_n $ and $B_0,B_1,\ldots,B_n \in \mathcal{B}(\mathbb{R})$ then
		
		\begin{align}\label{bridgemultlaw}
\mathbb{P}\left(\xi_{0}^{r}\in B_{0},\xi_{t_{1}}^{r}\in B_{1},\ldots,\xi_{t_{n}}^{r}\in B_{n}\right) =\hspace{4cm} & \nonumber \\
\dint_{B_{0}}\delta_{0}(dx_{0})\dint_{B_{1}}\mathbb{P}_{0,t_{1}}(x_{0},dx_{1})\ldots\dint_{B_{m}}\mathbb{P}_{t_{m-1},t_{m}}(x_{m-1},dx_{m})\times\overset{n}{\underset{j=m+1}{\prod}}\delta_{0}(B_{j}), &
\end{align}

where $m=\sup\left\{ k\in\left\{ 1,\ldots,n\right\} :t_{k}<r\right\} $.

				\end{enumerate}
	\end{proposition}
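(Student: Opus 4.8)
The overall strategy is to verify the Gaussian--Markov characterization \eqref{markovcaract} directly for $\xi^r$ via the explicit covariance formula, and then to let the representation \eqref{covrep} of $R_X$ propagate through all the subsequent objects (conditional laws, densities, finite-dimensional laws) by straightforward algebraic substitution. Concretely, for part (i), I would start from formula \eqref{bridgecov}, substitute $R_X(s,t)=\rho(\inf(s,t))q(\sup(s,t))$, and for $0\le s\le t<r$ compute
\[
R_{\xi^r}(s,t)=\rho(s)q(t)-\frac{\rho(s)q(r)\cdot\rho(t)q(r)}{\rho(r)q(r)}=\rho(s)\left(q(t)-\rho(t)\frac{q(r)}{\rho(r)}\right)=\rho(s)\,\tilde q(t),
\]
which is exactly the product form \eqref{covrepdetbri} with the stated $\tilde q$. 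To conclude the Markov property I would invoke the characterization recalled in the remark (Gaussian process with product-form covariance $\rho(\inf)\tilde q(\sup)$ is Markov w.r.t.\ its natural, hence its completed natural, filtration), after noting $\tilde q>0$ on $[0,r)$, which follows from Assumption \eqref{hy1}(ii) applied to $X_t,X_r$ (equivalently, from $A_{t,r}>0$, since $A_{t,r}/R_X(r,r)$ is the variance of $\xi^r_t$); the ratio condition $\rho/\tilde q$ nondecreasing is inherited from $\rho/q$ nondecreasing by a short monotonicity computation.

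For part (ii), the first claim---that Assumption \eqref{hy2} holds---amounts to checking that the covariance matrix of $(\xi^r_t,\xi^r_u)$ is nondegenerate for $0<t<u<r$; this is immediate once the covariance has product form $\rho(\inf)\tilde q(\sup)$ with $\rho,\tilde q>0$ and $\rho/\tilde q$ strictly increasing on $(0,r)$ (its determinant is $\rho(t)\tilde q(u)\bigl(\rho(t)\tilde q(t)\tfrac{\tilde q(u)}{\tilde q(t)}-\rho(u)\tilde q(u)\bigr)$-type expression, strictly positive). The rewriting \eqref{conditionalmarkov} is then pure algebra: I would substitute the representation \eqref{covrep} into the formulas for $\mu^r_{t,u}$ and $\sigma^r_{t,u}$ from \eqref{conditionalnonmarkov}, recognize that $R_X(u,t)R_X(r,r)-R_X(u,r)R_X(t,r)$ and $A_{t,r}$, $A_{u,r}$ all collapse into expressions built from the quantities $B_{s,t}$ defined in \eqref{Bdef} (indeed $A_{t,r}=\rho(t)B_{t,r}$ and $R_X(u,t)R_X(r,r)-R_X(u,r)R_X(t,r)=\rho(t)B_{u,r}$), and simplify to get mean $\tfrac{B_{u,r}}{B_{t,r}}x$ and variance $\tfrac{B_{t,u}B_{u,r}}{B_{t,r}}$. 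Parts (iii) and (iv) are then direct consequences: (iii) follows by plugging $A_{t,r}/R_X(r,r)=\rho(t)B_{t,r}/\rho(r)$ into \eqref{density} and writing out the Gaussian density explicitly; (iv) follows from (iii) and \eqref{equationphi} together with the stated identity for $\tfrac{\rho(r)}{\rho(t)B_{t,r}}-\tfrac{\rho(s)}{\rho(t)B_{t,s}}$, which is itself a one-line computation from \eqref{Bdef}.

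For part (v), I would argue as follows. Fix $0<t_1<\dots<t_n$ and let $m=\sup\{k:t_k<r\}$ (with the product over an empty index set equal to $1$ if $m=n$, and the whole first block trivial issues handled separately if $m=0$, i.e.\ all $t_k\ge r$, in which case every $\xi^r_{t_k}=0$ a.s.). By the definition of $\xi^r$ we have $\xi^r_{t_k}=0$ a.s.\ for $k>m$, which accounts for the factors $\delta_0(B_j)$, $j>m$, and $\xi^r_0=0$ a.s.\ by part (i) of the earlier proposition, giving the $\delta_0(dx_0)$ factor. It then remains to show that $(\xi^r_0,\xi^r_{t_1},\dots,\xi^r_{t_m})$ has the finite-dimensional law of a Markov process with transition kernels $\mathbb{P}_{t_{j-1},t_j}$ from \eqref{conditionalmarkov}; this is exactly the content of part (i) (Markov property) combined with part (ii) (identification of the one-step conditional laws), applied inductively via the tower property. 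I expect the main obstacle to be bookkeeping rather than conceptual: carefully handling the degenerate coordinates (the Dirac masses at $0$ for indices $\ge m$ and at time $0$) and the boundary case $t_m$ close to $r$, and making sure the measure-theoretic justification for conditioning on the (possibly degenerate) vector is clean---but since all the nondegeneracy needed for indices $\le m$ is supplied by part (ii), the induction goes through. The only genuinely delicate point in the whole proposition is verifying positivity of $\tilde q$ and strict monotonicity of $\rho/\tilde q$ on $[0,r)$, which is what makes parts (i)--(ii) legitimate; everything downstream is algebraic substitution.
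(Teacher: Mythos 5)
Your proposal is correct and follows essentially the route the paper itself indicates: the paper gives no separate written proof of this proposition beyond the remark that it suffices to show $R_{\xi^r}$ has the product form \eqref{covrep}, and your computation $R_{\xi^r}(s,t)=\rho(s)\,\tilde q(t)$ together with the verification that $\tilde q>0$ on $(0,r)$ (equivalently $A_{t,r}>0$), the nondegeneracy of $(\xi^r_t,\xi^r_u)$ via $B_{t,u}>0$, and the algebraic substitutions yielding (ii)--(v) is exactly that argument. One small slip that does not affect any conclusion: the correct identities are $A_{t,r}=\rho(t)q(r)B_{t,r}$ and $R_X(u,t)R_X(r,r)-R_X(u,r)R_X(t,r)=\rho(t)q(r)B_{u,r}$ (you dropped a factor $q(r)$ in each), but these factors cancel in $\mu^r_{t,u}$, $\sigma^r_{t,u}$ and in $A_{t,r}/R_X(r,r)=\rho(t)B_{t,r}/\rho(r)$, so every displayed formula you derive is right.
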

	
	It should be noted that, for all $n\in \mathbb{N}^*$ and $0< t_1 <\ldots< t_n<r$, the Gaussian vector $\left(\xi_{t_{1}}^{r},\ldots,\xi_{t_{n}}^{r}\right)$  has an absolutely continuous density with respect to the Lebesgue measure on $\mathbb{R}^{n}$. From now on, it will be noted by $\varphi_{\xi_{t_{1}}^{r},...,\xi_{t_{n}}^{r}}$ and his expression will be given in  the following

\begin{lemma}\label{lmtoprovemarkov}
		Let $n$ be an integer greater than $1$  and $0<t_1<t_2<...<t_n<r$. Then
		\begin{align}
			\varphi_{\xi_{t_{1}}^{r},...,\xi_{t_{n}}^{r}}(x_1,x_2,...,x_n)&=(2\pi)^{-\frac{n}{2}}\left( \dfrac{\rho(r)}{\rho(t_1)B_{t_n,r}}\prod\limits_{k=2}^{n}\dfrac{1}{B_{t_{k-1},t_k}}\right)^{\frac{1}{2}} \nonumber\\
		\nonumber \\	& \times \exp \left[ -\frac{1}{2}\,\dfrac{\rho(t_2)}{\rho(t_1)B_{t_1,t_2}}\,x_1^2-\frac{1}{2}\sum\limits_{k=2}^{n-1}\dfrac{B_{t_{k-1},t_{k+1}}}{B_{t_{k-1},t_{k}}B_{t_k,t_{k+1}}} \,x_k^2\right.  \nonumber \\
		\nonumber \\	& \left.+\sum\limits_{k=1}^{n-1}\dfrac{x_kx_{k+1}}{B_{t_k,t_{k+1}}}-\frac{1}{2}\dfrac{B_{t_{n-1},r}}{B_{t_n,r}B_{t_{n-1},t_n}} \, x_n^2\right].\label{equationvarphiforn}
		\end{align}

	\end{lemma}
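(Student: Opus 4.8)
The plan is to prove \eqref{equationvarphiforn} by induction on $n$, using the Markov property of $\xi^r$ established in Proposition \ref{propximarkovandcovariance}(i)--(ii) to factor the joint density through one-step transition kernels. For the base case $n=2$ I would simply multiply the marginal density $\varphi_{\xi_{t_1}^r}$ from \eqref{phimarkdensity} by the transition density $\mathbb{P}_{t_1,t_2}(x_1,dx_2)/\lambda(dx_2)$ from \eqref{conditionalmarkov}; expanding the two Gaussian exponentials and collecting the coefficients of $x_1^2$, $x_2^2$, and $x_1x_2$ should reproduce \eqref{equationvarphiforn} for $n=2$. The key algebraic identity needed here is the one already recorded in part (iv), namely
\[
\frac{\rho(r)}{\rho(t)\,B_{t,r}}-\frac{\rho(s)}{\rho(t)\,B_{t,s}}=\frac{\rho(s)q(r)-\rho(r)q(s)}{B_{t,r}B_{t,s}},
\]
together with analogous manipulations of the quantities $B_{s,t}$ defined in \eqref{Bdef}; these follow from the multiplicative representation \eqref{covrep} of $R_X$.

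For the inductive step, assuming \eqref{equationvarphiforn} holds for $n-1$ coordinates $t_1<\dots<t_{n-1}$, I would write
\[
\varphi_{\xi_{t_1}^r,\dots,\xi_{t_n}^r}(x_1,\dots,x_n)=\varphi_{\xi_{t_1}^r,\dots,\xi_{t_{n-1}}^r}(x_1,\dots,x_{n-1})\cdot \frac{\mathbb{P}_{t_{n-1},t_n}(x_{n-1},dx_n)}{\lambda(dx_n)},
\]
which is justified because $\xi^r$ is Markov (so conditioning on $(\xi_{t_1}^r,\dots,\xi_{t_{n-1}}^r)$ reduces to conditioning on $\xi_{t_{n-1}}^r$) and because Assumption \eqref{hy2} holds on $[0,r)$ by part (ii), guaranteeing all the relevant Gaussian vectors are non-degenerate. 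The transition density contributes the factor $\left(2\pi B_{t_{n-1},t_n}\right)^{-1/2}$ times $\exp\!\left[-\tfrac12 \tfrac{B_{t_{n-1},t_n}B_{t_n,r}}{B_{t_{n-1},r}}\left(x_n-\tfrac{B_{t_n,r}}{B_{t_{n-1},r}}x_{n-1}\right)^2\right]$ after substituting the formulas for $\sigma^r_{t,u}$, $\mu^r_{t,u}$ in the Markovian form; expanding the square produces a new $x_n^2$ term, a new cross term $x_{n-1}x_n$, and a correction to the $x_{n-1}^2$ coefficient.

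The main obstacle is bookkeeping: I must check that the $x_{n-1}^2$ coefficient in the $(n-1)$-density, which by the inductive formula is $\tfrac12 \tfrac{B_{t_{n-2},r}}{B_{t_{n-1},r}B_{t_{n-2},t_{n-1}}}$ (the ``last'' term of the $(n-1)$-sum), combines with the correction $\tfrac12\tfrac{B_{t_{n-1},t_n}B_{t_n,r}}{B_{t_{n-1},r}}\cdot\tfrac{B_{t_n,r}^2}{B_{t_{n-1},r}^2}$-type contribution from the square to yield exactly the generic $x_{n-1}^2$ coefficient $\tfrac12\tfrac{B_{t_{n-2},t_n}}{B_{t_{n-2},t_{n-1}}B_{t_{n-1},t_n}}$ appearing in the $n$-sum. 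This amounts to the identity
\[
\frac{B_{t_{n-2},r}}{B_{t_{n-1},r}B_{t_{n-2},t_{n-1}}}+\frac{B_{t_n,r}}{B_{t_{n-1},r}B_{t_{n-1},t_n}}=\frac{B_{t_{n-2},t_n}}{B_{t_{n-2},t_{n-1}}B_{t_{n-1},t_n}},
\]
which, after clearing denominators, reduces to a three-point relation among the $B$'s that is a direct consequence of \eqref{covrep} and the definition \eqref{Bdef}. Checking similarly that the normalizing constants telescope correctly (the factor $\rho(r)/(\rho(t_1)B_{t_{n-1},r})$ gets replaced by $\rho(r)/(\rho(t_1)B_{t_n,r})$ and one extra $1/B_{t_{n-1},t_n}$ appears) completes the induction. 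I would relegate these elementary but lengthy verifications to the reader or a short appendix, presenting only the structure of the induction and the two key $B$-identities.
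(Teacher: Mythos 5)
Your proposal is correct and is essentially the paper's own argument: the paper likewise factors the joint density as $\varphi_{\xi_{t_1}^r}(x_1)\prod_{k=2}^{n}p\bigl(\tfrac{B_{t_{k-1},t_k}B_{t_k,r}}{B_{t_{k-1},r}},x_k,\tfrac{B_{t_k,r}}{B_{t_{k-1},r}}x_{k-1}\bigr)$ via the Markov property and collects coefficients using exactly your three-point relation $\tfrac{B_{t_{k-1},r}}{B_{t_{k-1},t_k}B_{t_k,r}}+\tfrac{B_{t_{k+1},r}}{B_{t_k,t_{k+1}}B_{t_k,r}}=\tfrac{B_{t_{k-1},t_{k+1}}}{B_{t_{k-1},t_k}B_{t_k,t_{k+1}}}$ together with $\tfrac{\rho(r)}{\rho(t_1)B_{t_1,r}}+\tfrac{B_{t_2,r}}{B_{t_1,t_2}B_{t_1,r}}=\tfrac{\rho(t_2)}{\rho(t_1)B_{t_1,t_2}}$; organizing the expansion as an induction on $n$ rather than expanding the full product at once is only a presentational difference.
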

	\begin{proof}
		Since $\xi^r$ is a Gaussian Markov process, then using \eqref{conditionalmarkov} and \eqref{bridgemultlaw} we obtain
		
		$$\varphi_{\xi_{t_{1}}^{r},...,\xi_{t_{n}}^{r}}(x_1,x_2,...,x_n)=\varphi_{\xi_{t_{1}}^{r}}(x_1)\prod\limits_{k=2}^{n}p\bigg(\dfrac{B_{t_{k-1},t_k}B_{t_k,r}}{B_{t_{k-1},r}},x_k,\dfrac{B_{t_k,r}}{B_{t_{k-1},r}}x_{k-1}\bigg).$$ 
Using the following expression 
\[
\begin{array}{lll}
p\left( \dfrac{B_{t_{k-1},t_{k}}B_{t_{k},r}}{B_{t_{k-1},r}},x_{k},\dfrac{B_{t_{k},r}}{B_{t_{k-1},r}}x_{k-1} \right) & = & (2\pi)^{-\frac{1}{2}}\Bigg(\dfrac{B_{t_{k-1},r}}{B_{t_{k-1}t_{k}}B_{t_{k},r}}\Bigg)^{\frac{1}{2}}\\
\\
 &  & \times\exp\left[-\dfrac{1}{2}\dfrac{B_{t_{k-1},r}}{B_{t_{k-1},t_{k}}B_{t_{k},r}}\, x_{k}^{2}+\dfrac{x_{k-1}x_{k}}{B_{t_{k-1},t_{k}}}-\dfrac{1}{2}\dfrac{B_{t_{k},r}}{B_{t_{k-1},t_{k}}B_{t_{k-1},r}}\, x_{k-1}^{2}\right]
\end{array}
\]
yields
\[
\begin{array}{ll}
\prod\limits _{k=2}^{n}p\left(\dfrac{B_{t_{k-1},t_{k}}B_{t_{k},r}}{B_{t_{k-1},r}},x_{k},\dfrac{B_{t_{k},r}}{B_{t_{k-1},r}}x_{k-1}\right) & =(2\pi)^{-\frac{n-1}{2}}\prod\limits _{k=2}^{n}\left(\dfrac{B_{t_{k-1},r}}{B_{t_{k-1}t_{k}}B_{t_{k},r}}\right)^{\frac{1}{2}}\\
\\
 & \times\exp\left[-\dfrac{1}{2}\,\dfrac{B_{t_{2},r}}{B_{t_{1},t_{2}}B_{t_{1},r}}\, x_{1}^{2}-\dfrac{1}{2}\sum\limits_{k=2}^{n-1}\left(\dfrac{B_{t_{k-1},r}}{B_{t_{k-1},t_{k}}B_{t_{k},r}}+\dfrac{B_{t_{k+1},r}}{B_{t_{k},t_{k+1}}B_{t_{k},r}}\right)\,x_{k}^{2}\right.\\
\\
 & \left.+\sum\limits_{k=2}^{n-1}\,\dfrac{x_{k-1}x_{k}}{B_{t_{k-1},t_{k}}}-\dfrac{1}{2}\,\dfrac{B_{t_{n-1},r}}{B_{t_{n-1},t_{n}}B_{t_{n},r}}\,x_{n}^{2}\right]
\end{array}
\]
Hence
\[
\begin{array}{ll}
\varphi_{\xi_{t_{1}}^{r},...,\xi_{t_{n}}^{r}}(x_{1},x_{2},\ldots,x_{n}) & =(2\pi)^{-\frac{n}{2}}\left(\dfrac{\rho(r)}{\rho(t_1)B_{t_{1},r}}\right)^{\frac{1}{2}}\prod\limits _{k=2}^{n}\left(\dfrac{B_{t_{k-1},r}}{B_{t_{k-1}t_{k}}B_{t_{k},r}}\right)^{\frac{1}{2}}\\
\\
  &\times\exp\left[-\dfrac{1}{2}\,\left(\dfrac{\rho(r)}{\rho(t_{1})B_{t_{1},r}}+\dfrac{B_{t_{2},r}}{B_{t_{1},t_{2}}B_{t_{1},r}}\right)\,x_{1}^{2}\right. 
  \\
  \\ & -\left. \dfrac{1}{2}\sum\limits_{k=2}^{n-1}\left(\dfrac{B_{t_{k-1},r}}{B_{t_{k-1},t_{k}}B_{t_{k},r}}+\dfrac{B_{t_{k+1},r}}{B_{t_{k},t_{k+1}}B_{t_{k},r}}\right)\,x_{k}^{2}\right.\\
\\
 & \left.+\sum\limits_{k=2}^{n-1}\,\dfrac{x_{k-1}x_{k}}{B_{t_{k-1},t_{k}}}-\dfrac{1}{2}\,\dfrac{B_{t_{n-1},r}}{B_{t_{n-1},t_{n}}B_{t_{n},r}} \,x_{n}^{2}\right].
\end{array}
\]
Now using the fact that 
\[
\dfrac{\rho(r)}{\rho(t_{1})B_{t_{1},r}}+\dfrac{B_{t_{2},r}}{B_{t_{1},t_{2}}B_{t_{1},r}}=\dfrac{\rho(t_{2})}{\rho(t_{1})B_{t_{1},t_{2}}}
\]
and 
\[
\dfrac{B_{t_{k-1},r}}{B_{t_{k-1},t_{k}}B_{t_{k},r}}+\dfrac{B_{t_{k+1},r}}{B_{t_{k},t_{k+1}}B_{t_{k},r}}=\dfrac{B_{t_{k-1},t_{k+1}}}{B_{t_{k-1},t_{k}}B_{t_{k},t_{k+1}}},\;\;  k=2,3,\ldots,n-1,
\]
we arrive at \eqref{equationvarphiforn}.
\end{proof}	
In order to be able to state the main result of this section we will need to extend the Propositions \eqref{propbayesestimate} and \eqref{propextensionxiu}. More precisely, for $n\in \mathbb{N}^*$, we give the regular conditional distribution of $\tau$ and $(\tau, \xi_\cdot)$ given  the $n$-coordinate of $\xi$.
	\begin{proposition}\label{propbayesestimatejusquatn}
Assume that Assumptions \eqref{hy1}, \eqref{hyindependent} and \eqref{hymarkov} hold. Let $n\in \mathbb{N}^*$ and $0=t_0<t_1<t_2<...<t_n$ such that $F(t_1)>0$.  Let $g:\mathbb{R}_{+}\longrightarrow \mathbb{R} $ be a Borel function satisfying $\mathbb{E}[|g(\tau)|]<+\infty$. Then, $\mathbb{P}$-a.s., we have

\begin{align}\label{taucondxi}
\mathbb{E}[g(\tau)\vert\xi_{t_{1}},\ldots,\xi_{t_{n}}]= & \dint_{(0,t_{1}]}\frac{g(r)}{F(t_{1})}\mathbb{P}_{\tau}(dr)\;\mathbb{I}_{\{\xi_{t_{1}}=0\}} \nonumber \\ \nonumber
\\
 & +\sum\limits _{k=1}^{n-1}\dint_{(t_{k},t_{k+1}]}g(r)\psi_{k}(r,\xi_{t_{k}}) 
 \mathbb{P}_{\tau}(dr)\; \mathbb{I}_{\{\xi_{t_{k}}\neq0,\xi_{t_{k+1}}=0\}} \nonumber  \\ \nonumber
\\
 & +\dint_{(t_{n},+\infty)}g(r)\phi_{\xi_{t_{n}}^{r}}(\xi_{t_{n}})\mathbb{P}_{\tau}(dr)\; \mathbb{I}_{\{\xi_{t_{n}}\neq0\}}
\end{align}
		where the functions $\psi_{k}(r,x)$ are defined on $\mathbb{R}$ by:
		
\begin{equation}
\psi_{1}(r,x):=\dfrac{\varphi_{\xi_{t_{1}}^{r}}(x)}{\dint_{(t_{1},t_{2}]}\varphi_{\xi_{t_{1}}^{s}}(x)\mathbb{P}_{\tau}(ds)}=\dfrac{\left(\dfrac{\rho(r)}{B_{t_{1},r}}\right)^{\frac{1}{2}}\exp\left(-\dfrac{1}{2}\dfrac{\rho(r)}{\rho(t_{1})\,B_{t_{1},r}}\,x^{2}\right)}{\dint_{(t_{1},t_{2}]}\left(\dfrac{\rho(s)}{B_{t_{1},s}}\right)^{\frac{1}{2}}\exp\left(-\dfrac{1}{2}\dfrac{\rho(s)}{\rho(t_{1})\,B_{t_{1},s}}\,x^{2}\right)\mathbb{P}_{\tau}(ds)}
\end{equation}	
	and 	\begin{equation}
		\psi_{k}(r,x):=\dfrac{\left(\dfrac{\rho(r)}{B_{t_{k},r}}\right)^{\frac{1}{2}}\exp\left(-\dfrac{1}{2}\,\dfrac{B_{t_{k-1},r}}{B_{t_{k},r}B_{t_{k-1},t_{k}}}\, x^{2}\right)}{\dint_{(t_{k},t_{k+1}]}\left(\dfrac{\rho(s)}{B_{t_{k},s}}\right)^{\frac{1}{2}}\exp\left(-\frac{1}{2}\,\dfrac{B_{t_{k-1},s}}{B_{t_{k},s}B_{t_{k-1},t_{k}}}\, x^{2} \right)\mathbb{P}_{\tau}(ds)},\; k=2,\ldots,n-1.
		\end{equation}	
%
%
		
	\end{proposition}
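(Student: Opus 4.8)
The plan is to follow the same pattern as the proof of Proposition \ref{propextensionxiu}: reduce the conditional expectation to a Bayes-type formula by using that, conditionally on $\tau=r$, the process $\xi$ equals in law the Gaussian bridge $\xi^r$. Concretely, I would first note that since $\mathbb{I}_{\{\xi_{t_k}=0\}}=\mathbb{I}_{\{\tau\le t_k\}}$ $\mathbb{P}$-a.s.\ for each $k$, the event $\{\xi_{t_1},\ldots,\xi_{t_n}\}$-measurable partition according to which of the $t_k$ bracket $\tau$ can be written in terms of $\{\tau\le t_1\}$, $\{t_k<\tau\le t_{k+1}\}$ ($k=1,\ldots,n-1$) and $\{t_n<\tau\}$, and correspondingly $\{\xi_{t_1}=0\}$, $\{\xi_{t_k}\ne0,\xi_{t_{k+1}}=0\}$, $\{\xi_{t_n}\ne0\}$. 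So it suffices to compute $\mathbb{E}[g(\tau)h(\xi_{t_1},\ldots,\xi_{t_n})]$ for a bounded Borel test function $h$ on $\mathbb{R}^n$ and match terms.

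Next I would condition on $\tau$ and use \eqref{condlawresptau}, writing
\[
\mathbb{E}[g(\tau)h(\xi_{t_1},\ldots,\xi_{t_n})]=\dint_{(0,+\infty)}g(r)\,\mathbb{E}[h(\xi_{t_1}^r,\ldots,\xi_{t_n}^r)]\,\mathbb{P}_\tau(dr).
\]
For $r\in(t_n,+\infty)$ all coordinates $\xi_{t_j}^r$, $j\le n$, are non-degenerate Gaussian with joint density \eqref{equationvarphiforn} from Lemma \ref{lmtoprovemarkov}; for $r\in(t_k,t_{k+1}]$ the first $k$ coordinates $\xi_{t_1}^r,\ldots,\xi_{t_k}^r$ have the density $\varphi_{\xi_{t_1}^r,\ldots,\xi_{t_k}^r}$ while $\xi_{t_{k+1}}^r=\cdots=\xi_{t_n}^r=0$ a.s.; for $r\le t_1$ all coordinates vanish. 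Plugging these three regimes into the integral and interchanging the $r$-integral with the $x$-integrals (Fubini, justified by $\mathbb{E}[|g(\tau)|]<\infty$ and boundedness of $h$), I obtain a single expression of the form $\int h\,d\nu$ for an explicit measure $\nu$ on $\mathbb{R}^n$ built from the Gaussian densities and $\mathbb{P}_\tau$. Identifying $\nu$ as $\mathbb{E}[\,\cdot\,\mathbb{I}(\text{partition})\,]$ against the joint law of $(\xi_{t_1},\ldots,\xi_{t_n})$ and dividing through — exactly as in the Bayes computation in Proposition \ref{propbayesestimate} — yields \eqref{taucondxi}, with the normalizing denominators producing $\phi_{\xi_{t_n}^r}$ on $\{t_n<\tau\}$ and the functions $\psi_k$ on $\{t_k<\tau\le t_{k+1}\}$.

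The one genuinely computational point — and the step I expect to be the main obstacle — is verifying that the conditional density of $\xi_{t_k}^r$ given $(\xi_{t_1}^r,\ldots,\xi_{t_{k-1}}^r)$, i.e.\ the ratio $\varphi_{\xi_{t_1}^r,\ldots,\xi_{t_k}^r}/\varphi_{\xi_{t_1}^r,\ldots,\xi_{t_{k-1}}^r}$ read off from \eqref{equationvarphiforn}, collapses by the Markov property to something depending only on $x_{k-1}$, and that after normalization the $r$-dependent part reduces to the stated $\psi_k(r,x)$ — that is, to the factor $\bigl(\rho(r)/B_{t_k,r}\bigr)^{1/2}\exp\bigl(-\tfrac12\,\tfrac{B_{t_{k-1},r}}{B_{t_k,r}B_{t_{k-1},t_k}}x^2\bigr)$ up to an $r$-free constant. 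This is where the algebraic identities for $B_{s,t}$ already used in the proof of Lemma \ref{lmtoprovemarkov} (together with the representation \eqref{covrep}) come back into play; one checks that the $x_1,\ldots,x_{k-1}$-dependence and all $r$-free Gaussian prefactors cancel between numerator and denominator in \eqref{equationphi}-type ratios, leaving precisely $\psi_k$. Once this cancellation is in hand the rest is bookkeeping over the partition of $(0,+\infty)$ by the points $t_1<\cdots<t_n$, and the case $k=1$ is literally Proposition \ref{propbayesestimate} applied with $\xi_{t_1}$ in place of $\xi_t$ and $\mathbb{P}_\tau$ restricted to $(t_1,t_2]$.
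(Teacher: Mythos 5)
Your proposal is correct and follows essentially the same route as the paper: condition on $\tau$ via \eqref{condlawresptau}, decompose according to the three regimes $r\le t_1$, $t_k<r\le t_{k+1}$, $t_n<r$, apply the Bayes formula with the mixed Dirac/Lebesgue reference measure and the joint densities of Lemma \ref{lmtoprovemarkov}, and observe that in the ratio defining $\psi_k$ all $r$-free factors (in particular the whole $x_1,\ldots,x_{k-1}$-dependence of \eqref{equationvarphiforn}) cancel, leaving only the $(\rho(r)/B_{t_k,r})^{1/2}$ prefactor and the $x_k^2$ exponent. You also correctly isolate the one genuinely computational step, which is exactly the cancellation the paper carries out.
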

	\begin{proof} It follows from the equality in law \eqref{condlawresptau} that, for $B_1,B_2,\ldots,B_n \in \mathcal{B}(\mathbb{R})$, we have 

\[
	\begin{array}{c}
\mathbb{P}\left((\xi_{t_1},\ldots,\xi_{t_n})\in B_1\times \ldots \times B_n\vert \tau=r\right)=\mathbb{P}\left((\xi_{t_{1}}^{r},\ldots\xi_{t_{n}}^{r})\in B_{1}\times\ldots\times B_{n}\right)= \\ \\\overset{n}{\underset{k=1}{\prod}}\delta_{0}(B_{k})\mathbb{I}_{\{r\leq t_{1}\}}+

\sum\limits _{k=1}^{n-1}\mathbb{P}_{\xi_{t_{1}}^{r},...,\xi_{t_{k}}^{r}}\left(B_{1}\times\ldots\times B_{k}\right)\overset{n}{\underset{j=k+1}{\prod}}\delta_{0}(B_{j})\mathbb{I}_{\{t_{k}<r\leq t_{k+1}\}}

+\mathbb{P}_{\xi_{t_{1}}^{r},\ldots,\xi_{t_{n}}^{r}}\left(B_{1}\times\ldots\times B_{n}\right)\mathbb{I}_{\{t_{n}<r\}}
\\ \\ =\dint_{B_{1}\times\ldots\times B_{n}}q_{t_{1},\ldots,t_{n}}(r,x_{1},\ldots,x_{n})\mu(dx_{1},\ldots,dx_{n}).

\end{array}
\]
Here the function $q_{t_{1},\ldots,t_{n}}$ is a nonnegative and measurable in the $(n+1)$ variables jointly given by 
\[
\begin{array}{ll}
q_{t_{1},\ldots,t_{n}}(r,x_{1},\ldots,x_{n})= & \overset{n}{\underset{k=1}{\prod}}\mathbb{I}_{\{0\}}(x_{k})\; \mathbb{I}_{\{r\leq t_{1}\}}\\
 & +\sum\limits _{k=1}^{n-1}\varphi_{\xi_{t_{1}}^{r},\ldots,\xi_{t_{k}}^{r}}\left(x_{1},\ldots,x_{k}\right)\; \overset{k}{\underset{j=1}{\prod}}\mathbb{I}_{\mathbb{R}^*}(x_{j})\; \overset{n}{\underset{j=k+1}{\prod}}\mathbb{I}_{\{0\}}(x_{j})\; \mathbb{I}_{\{t_{k}<r\leq t_{k+1}\}}\\
 & +\varphi_{\xi_{t_{1}}^{r},\ldots,\xi_{t_{n}}^{r}}\left(x_{1},\ldots,x_{n}\right)\; \overset{n}{\underset{j=1}{\prod}}\mathbb{I}_{\mathbb{R}^*}(x_{j})\mathbb{I}_{\{t_{n}<r\}}
\end{array}
\]
and $\mu(dx_{1},dx_{2},\ldots,dx_{n})$ is a $\sigma$-finite measure on $\mathbb{R}^n$ given by
\[
\begin{array}{ll}
\mu(dx_{1},dx_{2},\ldots,dx_{n})= &  \overset{n}{\underset{k=1}{\bigotimes}}\delta_{0}(dx_{k})\\
 & +\sum\limits _{k=1}^{n-1}\lambda\left(dx_{1},\ldots,dx_{k}\right)\bigotimes\overset{n}{\underset{j=k+1}{\bigotimes}}\delta_{0}(B_{j})\\
 & +\lambda\left(dx_{1},\ldots,dx_{k}\right).
\end{array}
\]
We get from Bayes formula  
\[
\mathbb{E}[g(\tau)\vert\xi_{t_{1}},\ldots,\xi_{t_{n}}]=\dfrac{\dint_{(0,+\infty )}g(r)q_{t_{1},\ldots,t_{n}}(r,\xi_{t_{1}},\ldots,\xi_{t_{n}})\mathbb{P}_{\tau}(dr)}{\dint_{(0,+\infty )}q_{t_{1},\ldots,t_{n}}(r,\xi_{t_{1}},\ldots,\xi_{t_{n}})\mathbb{P}_{\tau}(dr)}.
\]
By a simple integration we find
\[
\begin{array}{l}
\dint_{(0,+\infty)}g(r)q_{t_{1},\ldots,t_{n}}(r,\xi_{t_{1}},\ldots,\xi_{t_{n}})\mathbb{P}_{\tau}(dr)  =\dint_{(0,t_{1}]}g(r)\mathbb{P}_{\tau}(dr)\mathbb{I}_{\{\xi_{t_{1}}=0,\ldots,\xi_{t_{n}}=0\}}\\
\\
  +\sum\limits _{k=1}^{n-1}\dint_{(t_{k},t_{k+1}]}g(r)\varphi_{\xi_{t_{1}}^{r},\ldots,\xi_{t_{k}}^{r}}\left(\xi_{t_{1}},\ldots,\xi_{t_{k}}\right)\mathbb{P}_{\tau}(dr)\mathbb{I}_{\{\xi_{t_{1}}\neq0,\ldots,\xi_{t_{k}}\neq0,\xi_{t_{k+1}}=0,\ldots,\xi_{t_{n}}=0\}}\\
\\
  +\dint_{(t_{n},+\infty)}g(r)\varphi_{\xi_{t_{1}}^{r},\ldots,\xi_{t_{n}}^{r}}\left(\xi_{t_{1}},\ldots,\xi_{t_{n}}\right)\mathbb{P}_{\tau}(dr)\mathbb{I}_{\{\xi_{t_{1}}\neq0,\ldots,\xi_{t_{n}}\neq0\}},
\end{array}
\]
and 
\[
\begin{array}{l}
\dint_{(0,+\infty)}q(r,\xi_{t_{1}},\ldots,\xi_{t_{n}})\mathbb{P}_{\tau}(dr)  =F(t_{1})\mathbb{I}_{\{\xi_{t_{1}}=0,\ldots,\xi_{t_{n}}=0\}}\\
\\
  +\sum\limits _{k=1}^{n-1}\dint_{(t_{k},t_{k+1}]}\varphi_{\xi_{t_{1}}^{r},\ldots,\xi_{t_{k}}^{r}}\left(\xi_{t_{1}},\ldots,\xi_{t_{k}}\right)\mathbb{P}_{\tau}(dr)\mathbb{I}_{\{\xi_{t_{1}}\neq0,\ldots,\xi_{t_{k}}\neq0,\xi_{t_{k+1}}=0,\ldots,\xi_{t_{n}}=0\}}\\
\\
  +\dint_{(t_{n},+\infty)}\varphi_{\xi_{t_{1}}^{r},\ldots,\xi_{t_{n}}^{r}}\left(\xi_{t_{1}},\ldots,\xi_{t_{n}}\right)\mathbb{P}_{\tau}(dr)\mathbb{I}_{\{\xi_{t_{1}}\neq0,\ldots,\xi_{t_{n}}\neq0\}}.
\end{array}
\]
Since $\{\xi_{t_{i}}=0\}\subset\{\xi_{t_{j}}=0\}$ if $j\geq i$
and therefore $\{\xi_{t_{j}}\neq0\}\subset\{\xi_{t_{i}}\neq0\}$ we
obtain
\[
\begin{array}{ll}
\mathbb{E}[g(\tau)\vert\xi_{t_{1}},\ldots,\xi_{t_{n}}]= & \dint_{(0,t_{1}]}\frac{g(r)}{F(t_{1})}\mathbb{P}_{\tau}(dr)\mathbb{I}_{\{\xi_{t_{1}}=0\}}\\
\\
 & +\sum\limits _{k=1}^{n-1}\dint_{(t_{k},t_{k+1}]}g(r)\dfrac{\varphi_{\xi_{t_{1}}^{r},\ldots,\xi_{t_{k}}^{r}}\left(\xi_{t_{1}},\ldots,\xi_{t_{k}}\right)}{\dint_{(t_{k},t_{k+1}]}\varphi_{\xi_{t_{1}}^{s},...,\xi_{t_{k}}^{s}}\left(\xi_{t_{1}},\ldots,\xi_{t_{k}}\right)\mathbb{P}_{\tau}(ds)}\mathbb{P}_{\tau}(dr)\mathbb{I}_{\{\xi_{t_{k}}\neq0,\xi_{t_{k+1}}=0\}}\\
\\
 & +\dint_{(t_{n},+\infty)}g(r)\dfrac{\varphi_{\xi_{t_{1}}^{r},\ldots,\xi_{t_{n}}^{r}}\left(\xi_{t_{1}},\ldots,\xi_{t_{n}}\right)}{\dint_{(t_{n},+\infty)}\varphi_{\xi_{t_{1}}^{s},\ldots,\xi_{t_{n}}^{s}}\left(\xi_{t_{1}},\ldots,\xi_{t_{n}}\right)\mathbb{P}_{\tau}(ds)}\mathbb{I}_{\{\xi_{t_{n}}\neq0\}}.
\end{array}
\]
Now for $k=1$ we use \eqref{phimarkdensity} to see that
\[
\dfrac{\varphi_{\xi_{t_{1}}^{r}}(x)}{\dint_{(t_{1},t_{2}]}\varphi_{\xi_{t_{1}}^{s}}(x)\mathbb{P}_{\tau}(ds)}=\dfrac{\left(\dfrac{\rho(r)}{B_{t_{1},r}}\right)^{\frac{1}{2}}\exp\left(-\dfrac{1}{2}\dfrac{\rho(r)}{\rho(t_{1})\,B_{t_{1},r}}\,x^{2}\right)}{\dint_{(t_{1},t_{2}]}\left(\dfrac{\rho(s)}{B_{t_{1},s}}\right)^{\frac{1}{2}}\exp\left(-\dfrac{1}{2}\dfrac{\rho(s)}{\rho(t_{1})\,B_{t_{1},s}}\,x^{2}\right)\mathbb{P}_{\tau}(ds)}.
\]
On the oder hand for $k=2,\ldots,n-1$, using \eqref{equationvarphiforn} it follows that
\[
\begin{array}{l}
\dfrac{\varphi_{\xi_{t_{1}}^{r},\ldots,\xi_{t_{k}}^{r}}\left(x_{1},\ldots,x_{k}\right)}{\dint_{(t_{k},t_{k+1}]}\varphi_{\xi_{t_{1}}^{s},\ldots,\xi_{t_{k}}^{s}}\left(x_{1},\ldots,x_{k}\right)\mathbb{P}_{\tau}(ds)}=  \dfrac{\left(\dfrac{\rho(r)}{B_{t_{k},r}}\right)^{\frac{1}{2}}\exp\left(-\dfrac{1}{2}x_{k}^{2}\dfrac{B_{t_{k-1},r}}{B_{t_{k},r}B_{t_{k-1},t_{k}}}\right)}{\dint_{(t_{k},t_{k+1}]}\left(\dfrac{\rho(s)}{B_{t_{k},s}}\right)^{\frac{1}{2}}\exp\left(-\frac{1}{2}x_{k}^{2}\dfrac{B_{t_{k-1},s}}{B_{t_{k},s}B_{t_{k-1},t_{k}}}\right)\mathbb{P}_{\tau}(ds)}
\end{array}
\]
Finally we obtain from \eqref{phinewexp} that
\[
\begin{array}{l}
\dfrac{\varphi_{\xi_{t_{1}}^{r},\ldots,\xi_{t_{n}}^{r}}\left(x_{1},\ldots,x_{n}\right)}{\dint_{(t_{n},+\infty)}\varphi_{\xi_{t_{1}}^{s},\ldots,\xi_{t_{n}}^{s}}\left(x_{1},\ldots,x_{n}\right)\mathbb{P}_{\tau}(ds)} \\
\\ =\dfrac{\left(\dfrac{\rho(r)}{B_{t_{n},r}}\right)^{\frac{1}{2}}\exp\left(-\dfrac{1}{2}\,\dfrac{B_{t_{n-1},r}}{B_{t_{n},r}B_{t_{n-1},t_{n}}}\,x_{n}^{2}\right)}{\dint_{(t_{n},+\infty)}\left(\dfrac{\rho(s)}{B_{t_{n},s}}\right)^{\frac{1}{2}}\exp\left(-\frac{1}{2}\,\dfrac{B_{t_{n-1},s}}{B_{t_{n},s}B_{t_{n-1},t_{n}}}\,x_{n}^{2}\right)\mathbb{P}_{\tau}(ds)}\\
\\
  =\left(\dint_{(t_{n},+\infty)}\left(\dfrac{\rho(s)B_{t_{n},r}}{\rho(r)B_{t_{n},s}}\right)^{\frac{1}{2}}\exp\left[\dfrac{1}{2}\,\left(\dfrac{\rho(s)q(r)-\rho(r)q(s)}{B_{t_{n},r}B_{t_{n},s}}\right)x_{n}^{2}\right]\mathbb{P}_{\tau}(ds)\right)^{-1}\\
\\
  =\phi_{\xi_{t_{n}}^{r}}(x_{n}).
\end{array}
\]
Combining all this leads to the formula \eqref{taucondxi}.
\begin{corollary} Assume that Assumptions \eqref{hy1}, \eqref{hyindependent} and \eqref{hymarkov} hold. Then
\begin{itemize}
\item[1.] The conditional law of the random time $\tau$ given $\xi_{t_{1}},\ldots,\xi_{t_{n}}$ is given by

\begin{align}\label{taucondlawmulti}
\mathbb{P}_{\tau\vert\xi_{t_{1}}=x_{1},\ldots,\xi_{t_{n}}=x_{n}}\left(x_{1},\ldots,x_{n},dr\right)= & \dfrac{1}{F(t_{1})}\,\mathbb{I}_{\{x_{1}=0\}}\,\mathbb{I}_{(0,t_{1}]}(r)\,\mathbb{P}_{\tau}(dr) \nonumber\\
\nonumber \\
 & +\sum\limits _{k=1}^{n-1}\psi_{k}(r,x_{k})\,\mathbb{I}_{\{x_{k}\neq0,x_{k+1}=0\}}\,\mathbb{I}_{(t_{k},t_{k+1}]}(r)\,\mathbb{P}_{\tau}(dr) \nonumber\\
\nonumber \\
 & +\phi_{\xi_{t_{n}}^{r}}(x_{n})\,\mathbb{I}_{\{x_{n}\neq0\}}\,\mathbb{I}_{(t_{n},+\infty)}(r)\,\mathbb{P}_{\tau}(dr)
\end{align}
\item[2.]  For any bounded measurable function $\mathfrak{g}$  defined on
		$(0,+\infty)\times \mathbb{R}$, we have

\begin{align}\label{tauximultcond}
\mathbb{E}[\mathfrak{g}(\tau,\xi_{t_{n}})\vert\xi_{t_{1}},\ldots,\xi_{t_{n}}]=& \dint_{(0,t_{1}]}\,  \dfrac{\mathfrak{g}(r,0)}{F(t_{1})}\,\mathbb{P}_{\tau}(dr)\,\mathbb{I}_{\{\xi_{t_{1}}=0\}} \nonumber\\
\nonumber \\
 & +\sum\limits _{k=1}^{n-1}\,\dint_{(t_{k},t_{k+1}]}\,\mathfrak{g}(r,0)\,\psi_{k}(r,\xi_{t_{k}})\,\mathbb{P}_{\tau}(dr)\,\mathbb{I}_{\{\xi_{t_{k}}\neq0,\xi_{t_{k+1}}=0\}} \nonumber \\
\nonumber \\
 & +\dint_{(t_{n},+\infty)}\,\mathfrak{g}(r,\xi_{t_{n}})\phi_{\xi_{t_{n}}^{r}}(\xi_{t_{n}})\,\mathbb{P}_{\tau}(dr)\,\mathbb{I}_{\{\xi_{t_{n}}\neq0\}}
\end{align}

		\end{itemize}
\end{corollary}
	\end{proof}
	
	\begin{proposition}\label{ncordbayesest}
Assume that Assumptions \eqref{hy1}, \eqref{hyindependent} and \eqref{hymarkov} hold. Let $n\in \mathbb{N}^*$ and $0=t_0<t_1<t_2<...<t_n<u$ such that $F(t_1)>0$.  Let $\mathfrak{g}$ be a bounded measurable function defined on
		$(0,+\infty)\times \mathbb{R}$. Then, $\mathbb{P}$-a.s., we have	
		
			\begin{align}
			\mathbb{E}[\mathfrak{g}(\tau,\xi_u)|\xi_{t_1},\xi_{t_2},...,\xi_{t_n}]=& \dint_{(0,t_1]}\frac{\mathfrak{g}(r,0)}{F(t_1)}\mathbb{P}_\tau(dr)\mathbb{I}_{\{\xi_{t_{1}}=0\}} \nonumber \\
		\nonumber \\	& +\sum\limits _{k=1}^{n-1}\dint_{(t_{k},t_{k+1}]}\mathfrak{g}(r,0)\psi_{k}(r,\xi_{t_{k}})\mathbb{P}_{\tau}(dr)\mathbb{I}_{\{\xi_{t_{k}}\neq0,\xi_{t_{k+1}}=0\}} \nonumber  \\
	\nonumber \\		& 	+ \dint_{(t_{n},u]}\mathfrak{g}(r,0)\phi_{\xi_{t_{n}}^{r}}(\xi_{t_{n}})\mathbb{P}_{\tau}(dr)\mathbb{I}_{\{\xi_{t_{n}}\neq0\}} \nonumber  \\
	\nonumber \\	& +\dint_{(u,+\infty)}\,\mathfrak{G}_{t_{n},u}(r,\xi_{t_{n}})\phi_{\xi_{t_{n}}^{r}}(\xi_{t_{n}})\,\mathbb{P}_{\tau}(dr)\,\mathbb{I}_{\{\xi_{t_{n}}\neq0\}} .	\label{equationbeyesextensionxi_u,t_n<u}
				\end{align}	
	\end{proposition}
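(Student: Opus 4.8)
The plan is to follow closely the proof of Proposition~\ref{propextensionxiu}(ii); the one genuinely new ingredient is the Markov property of the deterministic-length bridge $\xi^r$ established in Proposition~\ref{propximarkovandcovariance}(i), which lets us collapse the coordinates $\xi_{t_1},\ldots,\xi_{t_{n-1}}$ onto $\xi_{t_n}$ when predicting $\xi_u$ for $u>t_n$. Since $\mathbb{I}_{\{\xi_u=0\}}=\mathbb{I}_{\{\tau\le u\}}$ $\mathbb{P}$-a.s. and $\xi_u=0$ on $\{\tau\le u\}$, one first writes
\[
\mathbb{E}[\mathfrak{g}(\tau,\xi_u)\vert\xi_{t_1},\ldots,\xi_{t_n}]=\mathbb{E}[\mathfrak{g}(\tau,0)\mathbb{I}_{(0,u]}(\tau)\vert\xi_{t_1},\ldots,\xi_{t_n}]+\mathbb{E}[\mathfrak{g}(\tau,\xi_u)\mathbb{I}_{\{u<\tau\}}\vert\xi_{t_1},\ldots,\xi_{t_n}].
\]
For the first summand I would apply the multi-coordinate Bayes formula \eqref{tauximultcond} to the bounded function $(r,x)\mapsto\mathfrak{g}(r,0)\mathbb{I}_{(0,u]}(r)$, which is independent of its second argument; since $t_1<\cdots<t_n<u$, the factor $\mathbb{I}_{(0,u]}(r)$ equals $1$ on $(0,t_1]$ and on each $(t_k,t_{k+1}]$ with $k\le n-1$ and cuts the last integral down to $(t_n,u]$, which reproduces the first three terms on the right-hand side of \eqref{equationbeyesextensionxi_u,t_n<u}.

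For the second summand the heart of the matter is the identity
\[
\mathbb{E}[\mathfrak{g}(\tau,\xi_u)\mathbb{I}_{\{u<\tau\}}\vert\xi_{t_1},\ldots,\xi_{t_n}]=\mathbb{E}[\mathfrak{G}_{t_n,u}(\tau,\xi_{t_n})\mathbb{I}_{\{u<\tau\}}\vert\xi_{t_1},\ldots,\xi_{t_n}],
\]
with $\mathfrak{G}_{t_n,u}$ as in \eqref{Gturx}. To establish it I would test against an arbitrary bounded Borel $h:\mathbb{R}^n\to\mathbb{R}$: conditioning on $\tau=r$ and using the equality in law \eqref{condlawresptau},
\[
\mathbb{E}[\mathfrak{g}(\tau,\xi_u)\mathbb{I}_{\{u<\tau\}}h(\xi_{t_1},\ldots,\xi_{t_n})]=\dint_{(u,+\infty)}\mathbb{E}\big[\mathfrak{g}(r,\xi_u^r)\,h(\xi_{t_1}^r,\ldots,\xi_{t_n}^r)\big]\,\mathbb{P}_\tau(dr);
\]
then, for each fixed $r>u$, since $\xi^r$ is a Markov process and $t_n<u<r$ one has $\mathbb{E}[\mathfrak{g}(r,\xi_u^r)\vert\xi_{t_1}^r,\ldots,\xi_{t_n}^r]=\mathbb{E}[\mathfrak{g}(r,\xi_u^r)\vert\xi_{t_n}^r]=\mathfrak{G}_{t_n,u}(r,\xi_{t_n}^r)$, so that inserting this, using the tower property and \eqref{condlawresptau} backwards, $\mathbb{E}[\mathfrak{g}(\tau,\xi_u)\mathbb{I}_{\{u<\tau\}}h(\xi_{t_1},\ldots,\xi_{t_n})]=\mathbb{E}[\mathfrak{G}_{t_n,u}(\tau,\xi_{t_n})\mathbb{I}_{\{u<\tau\}}h(\xi_{t_1},\ldots,\xi_{t_n})]$, and the identity follows since $h$ is arbitrary. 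Applying \eqref{tauximultcond} a second time, now to the bounded function $(r,x)\mapsto\mathfrak{G}_{t_n,u}(r,x)\mathbb{I}_{(u,+\infty)}(r)$ and noting that $\mathbb{I}_{(u,+\infty)}(r)$ vanishes on $(0,t_1]$ and on each $(t_k,t_{k+1}]$ with $k\le n-1$ while restricting the last integral to $(u,+\infty)$, produces precisely the fourth term of \eqref{equationbeyesextensionxi_u,t_n<u}; adding the two contributions gives the claim.

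The step I expect to be the main obstacle is the reduction just described: one must disintegrate over $\tau$ \emph{before} invoking the Markov property, so that on $\{\tau=r\}$ one genuinely deals with the process $\xi^r$ and not with $\xi$, and one must keep $r>u$ (hence $t_n<u<r$) so that $(\xi_{t_1}^r,\ldots,\xi_{t_n}^r,\xi_u^r)$ lies in the non-degenerate Gaussian regime of Proposition~\ref{propximarkovandcovariance}(ii) in which the conditional law \eqref{conditionalmarkov} is meaningful. Once the past coordinates have been collapsed onto $\xi_{t_n}$, everything reduces to the already-established single-time formula \eqref{tauximultcond} and the remainder is bookkeeping with indicator functions.
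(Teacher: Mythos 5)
Your proposal is correct and follows essentially the same route as the paper: split off the event $\{u<\tau\}$, handle the complementary piece by the multi-coordinate Bayes formula, and reduce the remaining term to $\mathbb{E}[\mathfrak{G}_{t_n,u}(\tau,\xi_{t_n})\mathbb{I}_{\{u<\tau\}}\vert\xi_{t_1},\ldots,\xi_{t_n}]$ by disintegrating over $\tau$ and invoking the Markov property of $\xi^r$ for fixed $r>u$. The only (cosmetic) difference is that the paper splits the non-Markov part further into $\{\tau\le t_n\}$ and $\{t_n<\tau\le u\}$ and applies Proposition \ref{propbayesestimatejusquatn} twice, whereas you absorb both into a single application of \eqref{tauximultcond} with the function $(r,x)\mapsto\mathfrak{g}(r,0)\mathbb{I}_{(0,u]}(r)$.
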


		\begin{proof} First we split $\mathbb{E}[\mathfrak{g}(\tau,\xi_{u})\vert\xi_{t_{1}},\ldots,\xi_{t_{n}}]$ as follows
	\[
\begin{array}{ll}
\mathbb{E}[\mathfrak{g}(\tau,\xi_{u})\vert\xi_{t_{1}},\ldots,\xi_{t_{n}}]= & \mathbb{E}[\mathfrak{g}(\tau,0)\mathbb{I}_{\{\tau\leq t_{n}\}}\vert\xi_{t_{1}},\ldots,\xi_{t_{n}}]\\
\\
 &+ \mathbb{E}[\mathfrak{g}(\tau,0)\mathbb{I}_{\{t_{n}<\tau\leq u\}}|\xi_{t_{1}},\ldots,\xi_{t_{n}}]\\
\\
 & +\mathbb{E}[\mathfrak{g}(\tau,\xi_{u})\mathbb{I}_{\{u<\tau\}}|\xi_{t_{1}},\ldots,\xi_{t_{n}}]
\end{array}
\]
We obtain from Proposition \eqref{propbayesestimatejusquatn} that
\[
\begin{array}{ll}
\mathbb{E}[\mathfrak{g}(\tau,0)\mathbb{I}_{\{\tau\leq t_{n}\}}\vert\xi_{t_{1}},\ldots,\xi_{t_{n}}]= & \dint_{(0,t_{1}]}\frac{\mathfrak{g}(r,0)}{F(t_{1})}\mathbb{P}_{\tau}(dr)\mathbb{I}_{\{\xi_{t_{1}}=0\}}\\
\\
 & +\sum\limits _{k=1}^{n-1}\dint_{(t_{k},t_{k+1}]}\mathfrak{g}(r,0)\psi_{k}(r,\xi_{t_{k}})\mathbb{P}_{\tau}(dr)\mathbb{I}_{\{\xi_{t_{k}}\neq0,\xi_{t_{k+1}}=0\}},
\end{array}
\]
and
\[
\begin{array}{ll}
\mathbb{E}[\mathfrak{g}(\tau,0)\mathbb{I}_{\{t_{n}<\tau\leq u\}}\vert\xi_{t_{1}},\ldots,\xi_{t_{n}}]= & \dint_{(t_{n},u]}\mathfrak{g}(r,0)\phi_{\xi_{t_{n}}^{r}}(\xi_{t_{n}})\mathbb{P}_{\tau}(dr)\mathbb{I}_{\{\xi_{t_{n}}\neq0\}}.\end{array}
\]
Next we show that 
\begin{equation}
\mathbb{E}[\mathfrak{g}(\tau,\xi_{u})\mathbb{I}_{\{u<\tau\}}|\xi_{t_{1}},\ldots,\xi_{t_{n}}]=\dint_{(u,+\infty)}\,\mathfrak{G}_{t_{n},u}(r,\xi_{t_{n}})\phi_{\xi_{t_{n}}^{r}}(\xi_{t_{n}})\,\mathbb{P}_{\tau}(dr)\,\mathbb{I}_{\{\xi_{t_{n}}\neq0\}} .	\label{condeqtauxi}
\end{equation}
Indeed, since $\xi^{r}$  is a Markov process then for a bounded Borel function $h$ we have  
\begin{align*}
\mathbb{E}[\mathfrak{g}(\tau,\xi_{u})\mathbb{I}_{\left\{ u<\tau\right\} }h\left(\xi_{t_{1}},\ldots,\xi_{t_{n}}\right)] & =\dint_{(u,+\infty)}E[\mathfrak{g}(r,\xi_{u}^{r})h(\xi_{t_{1}}^{r},...,\xi_{t_{n}}^{r})]\mathbb{P}_{\tau}(dr)\\
 & =\dint_{(u,+\infty)}\mathbb{E}[\mathbb{E}[\mathfrak{g}(r,\xi_{u}^{r})h(\xi_{t_{1}}^{r},...,\xi_{t_{n}}^{r})|\xi_{t_{1}}^{r},...,\xi_{t_{n}}^{r}]]\mathbb{P}_{\tau}(dr)\\
 & =\dint_{(u,+\infty)}\mathbb{E}[\mathbb{E}[\mathfrak{g}(r,\xi_{u}^{r})|\xi_{t_{n}}^{r}]h(\xi_{t_{1}}^{r},...,\xi_{t_{n}}^{r})]\mathbb{P}_{\tau}(dr).
\end{align*}
 Using \eqref{Gturx}, for $t_{n}<u<r$, we get 
\[
\begin{array}{lll}
\mathbb{E}[\mathfrak{g}(\tau,\xi_{u})\mathbb{I}_{\{u<\tau\}}h(\xi_{t_{1}},\ldots,\xi_{t_{n}})] & = & \dint_{(u,+\infty)}\mathbb{E}[\mathfrak{G}_{t_{n},u}(r,\xi_{t_{n}}^{r})h(\xi_{t_{1}}^{r},...,\xi_{t_{n}}^{r})]\mathbb{P}_{\tau}(dr)\\
\\
 & = & \mathbb{E}[\mathfrak{G}_{t_{n},u}(\tau,\xi_{t_{n}})\mathbb{I}_{\{u<\tau\}}h(\xi_{t_{1}},\ldots,\xi_{t_{n}})].
\end{array}
\]
It follows from \eqref{tauximultcond}, that $\mathbb{P}$-a.s.

\begin{align}\mathbb{E}[\mathfrak{G}_{t_{n},u}(\tau,\xi_{t_{n}})\mathbb{I}_{\{u<\tau\}}\vert\xi_{t_{1}},\ldots,\xi_{t_{n}}]= & \dint_{(u,+\infty)}\,\mathfrak{G}_{t_{n},u}(r,\xi_{t_{n}})\,\phi_{\xi_{t_{n}}^{r}}(\xi_{t_{n}})\,\mathbb{P}_{\tau}(dr)\,\mathbb{I}_{\{\xi_{t_{n}}\neq0\}}.\end{align}
This induces that
\[
\begin{array}{l}
\mathbb{E}[\mathfrak{g}(\tau,\xi_{u})\mathbb{I}_{\{u<\tau\}}h(\xi_{t_{1}},\ldots,\xi_{t_{n}})]=\\
\\
\mathbb{E}\left(\dint_{(u,+\infty)}\,\mathfrak{G}_{t_{n},u}(r,\xi_{t_{n}})\phi_{\xi_{t_{n}}^{r}}(\xi_{t_{n}})\,\mathbb{P}_{\tau}(dr)\,\mathbb{I}_{\{\xi_{t_{n}}\neq0\}}h(\xi_{t_{1}},\ldots,\xi_{t_{n}})\right).
\end{array}
\]
Hence the formula \eqref{condeqtauxi} is proved and then the proof of the proposition is completed.
		\end{proof}
By combining the fact that $\{\xi_{t_{i}}=0\}\subset\{\xi_{t_{j}}=0\}$ for $j\geq i$, $\dint_{(t_{k},t_{k+1}]}\psi_{k}(r,\xi_{t_{k}})\mathbb{P}_{\tau}(dr)=1$ for $k=1,\ldots,n-1$ and Corollary \eqref{corconditionaldistribution} we obtain 
	\begin{corollary}\label{Markovxi}
Assume that Assumptions \eqref{hy1}, \eqref{hyindependent} and \eqref{hymarkov} hold. Let $n\in \mathbb{N}^*$ and $0=t_0<t_1<t_2<...<t_n<u$.  Let $g$ be a bounded measurable function defined on
		$\mathbb{R}$. Then, $\mathbb{P}$-a.s., we have	
		
			\begin{align}
			\mathbb{E}[g(\xi_u)|\xi_{t_1},\xi_{t_2},...,\xi_{t_n}]=& \,g(0) \left( \mathbb{I}_{\{\xi_{t_{n}}=0\}} 	+ \dint_{(t_{n},u]}\,\phi_{\xi_{t_{n}}^{r}}(\xi_{t_{n}})\mathbb{P}_{\tau}(dr)\mathbb{I}_{\{\xi_{t_{n}}\neq0\}} \right)\nonumber  \\
	\nonumber \\	& +\dint_{(u,+\infty)}\,K_{t_{n},u}(r,\xi_{t_{n}})\phi_{\xi_{t_{n}}^{r}}(\xi_{t_{n}})\,\mathbb{P}_{\tau}(dr)\,\mathbb{I}_{\{\xi_{t_{n}}\neq0\}}	\nonumber  \\
	\nonumber \\ = &\,\mathbb{E}[g(\xi_u)|\xi_{t_n}]. \nonumber
				\end{align}	
	\end{corollary}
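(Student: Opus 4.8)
The statement is a direct consequence of Proposition~\eqref{ncordbayesest} once the three terms carrying the factor $g(0)$ have been collapsed, followed by an identification with Corollary~\eqref{corconditionaldistribution}. The first step is to invoke Proposition~\eqref{ncordbayesest} with the bounded measurable function $\mathfrak{g}(r,x):=g(x)$, which does not depend on $r$. For this choice the defining formulas \eqref{Gturx} and \eqref{Hcondlaw} give $\mathfrak{G}_{t_{n},u}(r,x)=\mathbb{E}[g(\xi_{u}^{r})\mid\xi_{t_{n}}^{r}=x]=K_{t_{n},u}(r,x)$, so the last integral in \eqref{equationbeyesextensionxi_u,t_n<u} is already in the form claimed in the corollary. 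It then remains to rewrite the three terms of \eqref{equationbeyesextensionxi_u,t_n<u} carrying the factor $g(0)$ as the single expression $g(0)\big(\mathbb{I}_{\{\xi_{t_{n}}=0\}}+\int_{(t_{n},u]}\phi_{\xi_{t_{n}}^{r}}(\xi_{t_{n}})\,\mathbb{P}_{\tau}(dr)\,\mathbb{I}_{\{\xi_{t_{n}}\neq0\}}\big)$.

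To do this I would use three elementary observations. First, $\int_{(0,t_{1}]}\mathbb{P}_{\tau}(dr)=F(t_{1})$, so the first term of \eqref{equationbeyesextensionxi_u,t_n<u} equals $g(0)\,\mathbb{I}_{\{\xi_{t_{1}}=0\}}$. Second, for each $k=1,\dots,n-1$ the function $\psi_{k}$ of Proposition~\eqref{propbayesestimatejusquatn} is, by its very definition, a normalized density, so $\int_{(t_{k},t_{k+1}]}\psi_{k}(r,\xi_{t_{k}})\,\mathbb{P}_{\tau}(dr)=1$ on $\{\xi_{t_{k}}\neq0\}$ whenever $\mathbb{P}_{\tau}\big((t_{k},t_{k+1}]\big)>0$ (the denominator being a strictly positive Gaussian density evaluated at the point $\xi_{t_{k}}\neq0$); if instead $\mathbb{P}_{\tau}\big((t_{k},t_{k+1}]\big)=0$, then $\{\xi_{t_{k}}\neq0,\xi_{t_{k+1}}=0\}=\{t_{k}<\tau\leq t_{k+1}\}$ is $\mathbb{P}$-null and the corresponding summand vanishes; in either case the $k$-th summand of the second term of \eqref{equationbeyesextensionxi_u,t_n<u} equals $g(0)\,\mathbb{I}_{\{\xi_{t_{k}}\neq0,\xi_{t_{k+1}}=0\}}$. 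Third, since $\{\xi_{t}=0\}=\{\tau\leq t\}$ up to a $\mathbb{P}$-null set, the events $\{\xi_{t_{i}}=0\}$ increase with $i$, so $\{\xi_{t_{1}}=0\}$ together with the events $\{\xi_{t_{k}}\neq0,\xi_{t_{k+1}}=0\}$, $k=1,\dots,n-1$, are pairwise disjoint and partition $\{\xi_{t_{n}}=0\}$; hence the sum of their indicators is $\mathbb{I}_{\{\xi_{t_{n}}=0\}}$. Combining these observations with the remaining $g(0)$-term of \eqref{equationbeyesextensionxi_u,t_n<u} produces exactly the right-hand side displayed in the corollary.

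For the concluding equality $\mathbb{E}[g(\xi_{u})\mid\xi_{t_{1}},\dots,\xi_{t_{n}}]=\mathbb{E}[g(\xi_{u})\mid\xi_{t_{n}}]$ it then suffices to remark that the expression just obtained is nothing but formula \eqref{promarkovonepar} of Corollary~\eqref{corconditionaldistribution} evaluated at $t=t_{n}$, the hypothesis \eqref{hy2} required there being provided by Proposition~\eqref{propximarkovandcovariance}(ii) under Assumption~\eqref{hymarkov}. I expect the only slightly delicate points to be the normalization of $\psi_{k}$ and the separate handling of the degenerate case $\mathbb{P}_{\tau}((t_{k},t_{k+1}])=0$ (together with, if the tacit hypothesis $F(t_{1})>0$ inherited from Proposition~\eqref{ncordbayesest} is not assumed, a preliminary reduction to the first $t_{i}$ at which $F$ is strictly positive); everything else is bookkeeping. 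Once this finite-dimensional identity is in hand, the Markov property of $\xi$ relative to $\mathbb{F}^{\xi}$ follows by a routine monotone-class argument, since the cylinder events $\{\xi_{t_{1}}\in B_{1},\dots,\xi_{t_{n}}\in B_{n}\}$ generate $\mathcal{F}^{\xi}_{t_{n}}$.
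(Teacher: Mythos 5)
Your proposal is correct and follows essentially the same route as the paper: the paper obtains the corollary by specializing Proposition \ref{ncordbayesest} to $\mathfrak{g}(r,x)=g(x)$, using the normalization $\int_{(t_{k},t_{k+1}]}\psi_{k}(r,\xi_{t_{k}})\,\mathbb{P}_{\tau}(dr)=1$ together with the nesting $\{\xi_{t_{i}}=0\}\subset\{\xi_{t_{j}}=0\}$ for $j\geq i$ to collapse the $g(0)$ terms, and then identifying the result with formula \eqref{promarkovonepar} of Corollary \ref{corconditionaldistribution}. Your added care about the degenerate case $\mathbb{P}_{\tau}((t_{k},t_{k+1}])=0$ and the tacit hypothesis $F(t_{1})>0$ only makes explicit what the paper leaves implicit.
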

	Summarizing the above discussion we are able now to state the claim about the markov property of $\xi$ with respect to its natural filtration. 
	\begin{theorem}\label{thmxitaumarkov}
Under  Assumptions \eqref{hy1}, \eqref{hyindependent} and \eqref{hymarkov}, the Gaussian bridge $\xi$ of random length $\tau$  is an $\mathbb{F}^{\xi}$-Markov process.
	\end{theorem}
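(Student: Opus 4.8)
The plan is to bootstrap the finite-dimensional identity of Corollary \eqref{Markovxi} up to the filtration $\mathbb{F}^{\xi}$ by a monotone-class (Dynkin) argument. By definition it suffices to establish the simple Markov property: for every $t>0$, every $u>t$, and every bounded measurable $g:\mathbb{R}\to\mathbb{R}$,
\begin{equation*}
\mathbb{E}[g(\xi_{u})\mid\mathcal{F}_{t}^{\xi}]=\mathbb{E}[g(\xi_{u})\mid\xi_{t}]\qquad\mathbb{P}\text{-a.s.}
\end{equation*}
(the case $t=0$ being trivial, since $\xi_{0}=0$ a.s. makes $\mathcal{F}_{0}^{\xi}$ trivial modulo null sets). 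From this, the conditional-independence version, $\mathbb{E}\bigl[\prod_{i=1}^{k}g_{i}(\xi_{u_{i}})\mid\mathcal{F}_{t}^{\xi}\bigr]=\mathbb{E}\bigl[\prod_{i=1}^{k}g_{i}(\xi_{u_{i}})\mid\xi_{t}\bigr]$ for $t<u_{1}<\cdots<u_{k}$, follows by the usual backward induction on $k$: condition on $\mathcal{F}_{u_{k-1}}^{\xi}$, use the simple property to write $\mathbb{E}[g_{k}(\xi_{u_{k}})\mid\mathcal{F}_{u_{k-1}}^{\xi}]$ as a function of $\xi_{u_{k-1}}$, absorb it into $g_{k-1}$, and iterate.

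To prove the displayed identity, fix $t>0$, $u>t$ and a bounded measurable $g$. Since $\mathbb{E}[g(\xi_{u})\mid\xi_{t}]$ is $\sigma(\xi_{t})$-measurable, hence $\mathcal{F}_{t}^{\xi}$-measurable, it is enough to verify
\begin{equation*}
\mathbb{E}\bigl[g(\xi_{u})\,\mathbb{I}_{A}\bigr]=\mathbb{E}\bigl[\mathbb{E}[g(\xi_{u})\mid\xi_{t}]\,\mathbb{I}_{A}\bigr]
\end{equation*}
for $A$ in a $\pi$-system that generates $\mathcal{F}_{t}^{\xi}$. I would take the $\pi$-system of cylinder events $A=\{\xi_{s_{1}}\in B_{1},\ldots,\xi_{s_{m}}\in B_{m}\}$, $m\ge 1$, $0\le s_{1}<\cdots<s_{m}\le t$, $B_{i}\in\mathcal{B}(\mathbb{R})$, which generates $\mathcal{F}_{t}^{\xi}$ by definition of the natural filtration. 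Discarding the coordinate at $0$ if present (on which $\xi_{0}=0$ a.s., so $A$ coincides up to a null set with a cylinder based on strictly positive times) and, when $t\notin\{s_{1},\ldots,s_{m}\}$, adjoining the coordinate at $t$ with Borel set $\mathbb{R}$, one may assume $0<s_{1}<\cdots<s_{m}=t<u$. Corollary \eqref{Markovxi}, applied with the $n=m$ times $s_{1}<\cdots<s_{m}$ and with $u$, then gives $\mathbb{E}[g(\xi_{u})\mid\xi_{s_{1}},\ldots,\xi_{s_{m}}]=\mathbb{E}[g(\xi_{u})\mid\xi_{s_{m}}]=\mathbb{E}[g(\xi_{u})\mid\xi_{t}]$ $\mathbb{P}$-a.s.; multiplying by the $\sigma(\xi_{s_{1}},\ldots,\xi_{s_{m}})$-measurable indicator $\mathbb{I}_{A}$ and taking expectations yields the displayed identity on $A$. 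The collection of $A\in\mathcal{F}_{t}^{\xi}$ for which it holds is a $\lambda$-system — stable under complementation, stable under increasing limits by dominated convergence, and containing $\Omega$ — so Dynkin's theorem upgrades it to all of $\mathcal{F}_{t}^{\xi}$.

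All the analytic substance has already been spent upstream, in Corollary \eqref{Markovxi}: the point is that, although $\xi$ is \emph{not} a Gaussian process (the random length $\tau$ breaks Gaussianity), conditioning $g(\xi_{u})$ on $\xi_{t_{1}},\ldots,\xi_{t_{n}}$ nonetheless collapses to conditioning on $\xi_{t_{n}}$ alone. That in turn rests on the Gaussian-Markov property of the deterministic-length bridge $\xi^{r}$ (Proposition \eqref{propximarkovandcovariance}, where $R_{\xi^{r}}$ is shown to have the product form characterizing Gaussian-Markov processes) together with the Bayes-type formulas of Propositions \eqref{propbayesestimatejusquatn} and \eqref{ncordbayesest}. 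Consequently the only step here that needs any care is the monotone-class passage — essentially the bookkeeping that the cylinder $\pi$-system generates $\mathcal{F}_{t}^{\xi}$ and that one may harmlessly normalise the conditioning times so that $t$ is the last of them — and I expect no genuine obstacle beyond that.
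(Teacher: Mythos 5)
Your proposal is correct and follows essentially the same route as the paper: the paper likewise reduces the theorem to the finite-dimensional identity of Corollary \eqref{Markovxi} (together with the observation that $\xi_{0}=0$ a.s.\ handles $t=0$), and simply cites Theorem 1.3 of Blumenthal--Getoor for the passage from finite-dimensional conditioning to the natural filtration, which is exactly the $\pi$-$\lambda$/monotone-class argument you write out explicitly.
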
 
	\begin{proof}
	First, we would like to mention that since $\xi_{0}=0$ almost surely then it is easy to see that 
$$\mathbb{E}[g(\xi_u)\vert \mathcal{F}^{\xi}_{0} ]=\mathbb{E}[g(\xi_u)\vert \xi_{0}].$$	
	The markov property of $\xi$ is then a consequence of Corollary \eqref{Markovxi} and Theorem 1.3 in Blumenthal and Getoor \cite{BG}.
	\end{proof}
	\begin{remark}
		It is not hard to see that the Markov property can be extended to the completed filtration  $\mathbb{F}^{\xi,c}$.
	\end{remark}
We close this section with some extensions of Propositions \eqref{propbayesestimatejusquatn} and \eqref{ncordbayesest} based on the observation of $\xi$ up to time $t$. We denote by $\mathcal{F}^{\xi}_{t,+\infty} := \sigma(\xi_s, t \leq s \leq +\infty) \vee \mathcal{N}_P$ the $\sigma$-algebra generated by the future of $\xi$ at time $t$. We have the following result.
	\begin{proposition}\label{thmbayesestimatemarkovtau}
		Assume that Assumptions \eqref{hy1}, \eqref{hyindependent} and \eqref{hymarkov} hold. Let $t>0$ and $g:\mathbb{R}_{+}\longrightarrow \mathbb{R} $ be a Borel function such that
		$\mathbb{E}[|g(\tau)|]<+\infty$. Then 
		$$ \mathbb{E}[g(\tau)\vert  \mathcal{F}_{t}^{\xi,c}]=g(\tau\wedge t)\mathbb{I}_{\{\xi_t=0\}}+\dint_{(t,+\infty)}g(r)\phi_{\xi_{t}^r}(\xi_{t})\,\mathbb{P}_{\tau}(dr)\,\mathbb{I}_{\{\xi_t\neq0 \}},~~\mathbb{P}-a.s. $$
	\end{proposition}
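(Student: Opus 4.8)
The plan is to write $g(\tau)=g(\tau)\,\mathbb{I}_{\{\tau\le t\}}+g(\tau)\,\mathbb{I}_{\{\tau> t\}}$, compute the conditional expectation of each piece with respect to $\mathcal{F}_t^{\xi,c}$, and check that the sum is the announced right-hand side $Z_t:=g(\tau\wedge t)\,\mathbb{I}_{\{\xi_t=0\}}+\int_{(t,+\infty)}g(r)\,\phi_{\xi_{t}^r}(\xi_{t})\,\mathbb{P}_{\tau}(dr)\,\mathbb{I}_{\{\xi_t\neq0\}}$. For the first piece, recall that $\tau$ is an $\mathbb{F}^{\xi,c}$-stopping time, so $\tau\wedge t$ is $\mathcal{F}_t^{\xi,c}$-measurable and hence so is $g(\tau\wedge t)$; since moreover $\{\tau\le t\}\in\mathcal{F}_t^{\xi,c}$ and $\tau\wedge t=\tau$ on that event, the random variable $g(\tau)\,\mathbb{I}_{\{\tau\le t\}}=g(\tau\wedge t)\,\mathbb{I}_{\{\tau\le t\}}$ is already $\mathcal{F}_t^{\xi,c}$-measurable and integrable. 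Therefore $\mathbb{E}[g(\tau)\,\mathbb{I}_{\{\tau\le t\}}\mid\mathcal{F}_t^{\xi,c}]=g(\tau\wedge t)\,\mathbb{I}_{\{\tau\le t\}}$, and using $\mathbb{P}(\{\xi_t=0\}\bigtriangleup\{\tau\le t\})=0$ this equals $g(\tau\wedge t)\,\mathbb{I}_{\{\xi_t=0\}}$ a.s., the first term of $Z_t$. This step uses only the stopping-time property proved earlier.

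For the second piece I would run a monotone-class argument on $\mathcal{F}_t^{\xi}$. Let $\mathcal{P}$ be the $\pi$-system of events $\{(\xi_{t_1},\dots,\xi_{t_{n-1}},\xi_t)\in B_1\times\cdots\times B_n\}$ with $n\ge1$, $0<t_1<\cdots<t_{n-1}<t$ and $B_i\in\mathcal{B}(\mathbb{R})$; this is indeed a $\pi$-system (merging time-grids preserves the feature that $t$ is the largest coordinate), it contains $\Omega$, and it generates $\mathcal{F}_t^{\xi}$. Fix $C\in\mathcal{P}$ and put $t_n:=t$. Since $\mathbb{I}_C$ is $\sigma(\xi_{t_1},\dots,\xi_{t_n})$-measurable, $\mathbb{E}[g(\tau)\,\mathbb{I}_{\{\tau>t\}}\,\mathbb{I}_C]=\mathbb{E}\big[\mathbb{E}[(g\,\mathbb{I}_{(t,+\infty)})(\tau)\mid\xi_{t_1},\dots,\xi_{t_n}]\,\mathbb{I}_C\big]$. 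Applying Proposition \ref{propbayesestimatejusquatn} with the integrable Borel function $r\mapsto g(r)\,\mathbb{I}_{(t,+\infty)}(r)$ in place of $g$ in \eqref{taucondxi}, the first term and the sum terms vanish (their integration ranges $(0,t_1]$ and $(t_k,t_{k+1}]$ lie in $(0,t]$, disjoint from the support $(t,+\infty)$ of the new integrand), and only the last term remains, equal to $\int_{(t,+\infty)}g(r)\,\phi_{\xi_{t}^{r}}(\xi_{t})\,\mathbb{P}_{\tau}(dr)\,\mathbb{I}_{\{\xi_t\neq0\}}$ because $t_n=t$. Call this $V$; it is $\sigma(\xi_t)$-measurable (the map $(r,\omega)\mapsto\phi_{\xi_t^r}(\xi_t(\omega))$ is jointly measurable by \eqref{equationphi}), hence $\mathcal{F}_t^{\xi}$-measurable, and integrable since $\mathbb{E}|V|=\mathbb{E}[|g(\tau)|\,\mathbb{I}_{\{\tau>t\}}]<\infty$. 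Thus $\mathbb{E}[g(\tau)\,\mathbb{I}_{\{\tau>t\}}\,\mathbb{I}_C]=\mathbb{E}[V\,\mathbb{I}_C]$ for every $C\in\mathcal{P}$, so Dynkin's lemma gives $\mathbb{E}[g(\tau)\,\mathbb{I}_{\{\tau>t\}}\mid\mathcal{F}_t^{\xi}]=V$ a.s., and, conditional expectations of integrable variables being unchanged under augmentation by $\mathcal{N}_P$, also $\mathbb{E}[g(\tau)\,\mathbb{I}_{\{\tau>t\}}\mid\mathcal{F}_t^{\xi,c}]=V$. Adding the two pieces yields $\mathbb{E}[g(\tau)\mid\mathcal{F}_t^{\xi,c}]=Z_t$.

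The delicate points, which I expect to be the only non-routine ones, are: (a) one must pick the generating $\pi$-system so that $t$ is always present as the largest coordinate, so that \eqref{taucondxi} collapses to its last term; and (b) the applicability of Proposition \ref{propbayesestimatejusquatn} when $F(t_1)=0$ for small $t_1$. Point (b) is harmless: then $\mathbb{P}(\xi_{t_1}=0)=\mathbb{P}(\tau\le t_1)=0$, so the term $F(t_1)^{-1}\mathbb{I}_{\{\xi_{t_1}=0\}}$ is $\mathbb{P}$-a.s. zero and the Bayes computation underlying \eqref{taucondxi} is unaffected for the other terms — and in any case here only the last term survives. Finally, I would mention a shorter alternative for the $\{\tau>t\}$ piece: because Assumption \ref{hy1}(ii) makes $\xi^r_u$ a non-degenerate Gaussian for $0<u<r$, the zero set of $\xi$ on $(t,\tau)$ has zero Lebesgue measure $\mathbb{P}$-a.s., so on $\{\tau>t\}$ one has $\tau=\sup\{u>t:\xi_u\neq0\}$ a.s., and hence $g(\tau)\,\mathbb{I}_{\{\tau>t\}}$ is, modulo $\mathcal{N}_P$, $\sigma(\xi_u:u>t)$-measurable; the Markov property of $\xi$ (Theorem \ref{thmxitaumarkov} and the remark following it) then gives $\mathbb{E}[g(\tau)\,\mathbb{I}_{\{\tau>t\}}\mid\mathcal{F}_t^{\xi,c}]=\mathbb{E}[g(\tau)\,\mathbb{I}_{\{\tau>t\}}\mid\xi_t]=V$, the last equality by Proposition \ref{propbayesestimate} applied to $g\,\mathbb{I}_{(t,+\infty)}$. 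In that route the one genuinely non-routine step is the verification that $\tau$ is recoverable from the post-$t$ path.
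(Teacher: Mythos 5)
Your proof is correct, and its first half (the $\{\tau\le t\}$ piece, handled through the stopping-time property of $\tau$ and the a.s.\ identity $\{\xi_t=0\}=\{\tau\le t\}$) is exactly the paper's. For the $\{\tau>t\}$ piece the paper takes the route you only sketch as an alternative at the end: it writes $g(\tau)\mathbb{I}_{\{\tau>t\}}=g(\tau\vee t)\mathbb{I}_{\{t<\tau\}}$, asserts that this variable is measurable with respect to the future $\sigma$-algebra $\mathcal{F}^{\xi}_{t,+\infty}$, invokes the $\mathbb{F}^{\xi,c}$-Markov property of $\xi$ (Theorem \ref{thmxitaumarkov}) to replace $\mathcal{F}_t^{\xi,c}$ by $\sigma(\xi_t)$, and concludes with Proposition \ref{propbayesestimate}. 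The paper does not actually justify the future-measurability of $\tau\vee t$; your observation that $\tau=\sup\{u>t:\xi_u\neq 0\}$ a.s.\ on $\{\tau>t\}$ (via the non-degeneracy of $\xi^r_u$ for $u<r$, Fubini, and path continuity) is precisely the missing justification, so your ``shorter alternative'' is in effect the paper's proof made complete. Your primary route --- a monotone-class argument over cylinder events whose largest time coordinate is $t$, combined with the $n$-point Bayes formula \eqref{taucondxi} applied to $g\,\mathbb{I}_{(t,+\infty)}$, for which only the last term survives --- is genuinely different: it bypasses both the Markov property of $\xi$ and the future-measurability issue, at the price of routine $\pi$--$\lambda$ bookkeeping, and is the more self-contained of the two since Proposition \ref{propbayesestimatejusquatn} is established before the Markov property in any case. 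Your treatment of the $F(t_1)=0$ degeneracy is also correct. The only microscopic point to tidy in the main route is that your $\pi$-system omits the time-$0$ coordinate, so it generates $\mathcal{F}_t^{\xi}$ only up to $\mathbb{P}$-null sets; since $\xi_0=0$ a.s.\ and the target $\sigma$-algebra is the completed $\mathcal{F}_t^{\xi,c}$, this costs nothing.
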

	\begin{proof} 
		Obviously, we have
		$$\mathbb{E}[g(\tau)| \mathcal{F}_{t}^{\xi,c}]=\mathbb{E}[g(\tau\wedge t)\mathbb{I}_{\{\tau\leqslant t\}}|\mathcal{F}_{t}^{\xi,c}]+\mathbb{E}[g(\tau\vee t)\mathbb{I}_{\{ t<\tau \}}|\mathcal{F}_{t}^{\xi,c}].$$
Now since  $g(\tau\wedge t)\mathbb{I}_{\{\tau\leqslant t\}}$ is $ \mathcal{F}^{\xi,c}_{t}$-measurable then, $\mathbb{P}$-a.s, one has
\[
\begin{array}{lll}
\mathbb{E}[g(\tau\wedge t)\mathbb{I}_{\{\tau\leqslant t\}}\vert\mathcal{F}_{t}^{\xi,c}] & = & g(\tau\wedge t)\mathbb{I}_{\{\tau\leqslant t\}}\\
\\
 & = & g(\tau\wedge t)\mathbb{I}_{\{\xi_{t}=0\}}.
\end{array}
\]
On the other hand due to the fact that $g(\tau\vee t)\mathbb{I}_{\{t< \tau \}}$ is  $ \mathcal{F}^{\xi}_{t,+\infty}$-measurable and $\xi$ is a Markov process with respect to its completed natural filtration we obtain
		$$\mathbb{E}[g(\tau\vee t)\mathbb{I}_{\{ t<\tau \}}\vert \mathcal{F}_{t}^{\xi,c}]=\mathbb{E}[g(\tau\vee t)\mathbb{I}_{\{ t<\tau \}}|\xi_{t}],~~\mathbb{P}-a.s.$$
The result is deduced from \eqref{equationbeyesestimate}. 
	\end{proof}
		Here it is possible to see how the Bayesian estimate given above provides
	a better knowledge on the stopping time $\tau$ through the observation of $\xi$ at time $t$.
	Next we give an extension of the Bayes estimates of $\tau$ which is a consequence of the Markov property of $\xi$ and Proposition \eqref{propextensionxiu}.
	\begin{proposition}\label{thmextensionxiumarkov}
		Assume that Assumptions \eqref{hy1}, \eqref{hyindependent} and \eqref{hymarkov} hold. Let $0 < t< u$ and $g$ be a bounded measurable function defined on
		$(0,+\infty)\times \mathbb{R}$. Then, $\mathbb{P}$-a.s, we have
		\[
		\begin{array}{lll}
		(i) \qquad  \mathbb{E}[g(\tau,\xi_{t})| \mathcal{F}_{t}^{\xi,c}]& = &g(\tau\wedge t,0)\mathbb{I}_{\{\xi_t=0\}}+\dint_{(t,+\infty)}g(r,\xi_{t})\phi_{t}(r,\xi_{t})\mathbb{P}_{\tau}(dr)\mathbb{I}_{\{\xi_t \neq 0\}}. 
				\\ 
				\\
				\\
		(ii) \qquad \mathbb{E}[g(\tau,\xi_{u})|\mathcal{F}_{t}^{\xi,c}] &= &g(\tau\wedge t,0)\mathbb{I}_{\{\xi_t=0\}}+\dint_{(t,u]}g(r,0)\phi_{t}(r,\xi_{t})\mathbb{P}_{\tau}(dr)\mathbb{I}_{\{\xi_t \neq 0\}}\\ \\
			& & +\dint_{(u,+\infty)}\int_{\mathbb{R}} g(r,y)p\left( \dfrac{B_{u,r}B_{t,u}}{B_{t,r}},y,\frac{B_{u,r}}{B_{t,r}}\xi_t\right) dy \phi_{t}(r,\xi_{t})\mathbb{P}_{\tau}(dr)\mathbb{I}_{\{\xi_t \neq 0\}}. 
			\end{array}
		\]
	\end{proposition}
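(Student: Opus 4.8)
The plan is to reduce the statement to combinations of results already proved, by exploiting the Markov property of $\xi$ with respect to $\mathbb{F}^{\xi,c}$ (established in Theorem \ref{thmxitaumarkov} and the remark following it) together with the measurability structure of the events $\{\xi_t = 0\} = \{\tau \leq t\}$ modulo $\mathbb{P}$-null sets. For part $(i)$, I would first split $g(\tau,\xi_t) = g(\tau\wedge t,\xi_t)\mathbb{I}_{\{\tau \leq t\}} + g(\tau\vee t,\xi_t)\mathbb{I}_{\{t<\tau\}}$. On $\{\tau\leq t\}$ we have $\xi_t = 0$ a.s., so the first piece equals $g(\tau\wedge t,0)\mathbb{I}_{\{\xi_t=0\}}$, which is $\mathcal{F}^{\xi,c}_t$-measurable and hence its own conditional expectation. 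For the second piece, $g(\tau\vee t,\xi_t)\mathbb{I}_{\{t<\tau\}}$ is $\mathcal{F}^{\xi}_{t,+\infty}$-measurable (recalling $\{t<\tau\}=\{\xi_t\neq 0\}$ a.s.), so by the Markov property of $\xi$ with respect to $\mathbb{F}^{\xi,c}$ its conditional expectation given $\mathcal{F}^{\xi,c}_t$ reduces to the conditional expectation given $\xi_t$; then Proposition \ref{propextensionxiu}$(i)$ (or equivalently Proposition \ref{propbayesestimate}) gives the stated integral, with $\phi_t(r,\cdot) = \phi_{\xi^r_t}(\cdot)$ in the notation used there.

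For part $(ii)$ I would proceed in the same spirit, splitting according to the position of $\tau$ relative to the three regions $(0,t]$, $(t,u]$, $(u,+\infty)$:
\[
g(\tau,\xi_u) = g(\tau\wedge t,\xi_u)\mathbb{I}_{\{\tau\leq t\}} + g(\tau,\xi_u)\mathbb{I}_{\{t<\tau\leq u\}} + g(\tau,\xi_u)\mathbb{I}_{\{u<\tau\}}.
\]
On $\{\tau\leq t\}$ one has $\xi_u = 0$ a.s. and the term is $\mathcal{F}^{\xi,c}_t$-measurable, giving $g(\tau\wedge t,0)\mathbb{I}_{\{\xi_t=0\}}$. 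On $\{t<\tau\leq u\}$ we again have $\xi_u = 0$ a.s., so that term is $g(\tau,0)\mathbb{I}_{\{t<\tau\leq u\}}$, which is $\mathcal{F}^{\xi}_{t,+\infty}$-measurable; the Markov property reduces its conditional expectation to conditioning on $\xi_t$, and then Proposition \ref{propextensionxiu}$(ii)$ (the $(t,u]$ integral there) yields $\int_{(t,u]} g(r,0)\phi_t(r,\xi_t)\,\mathbb{P}_\tau(dr)\,\mathbb{I}_{\{\xi_t\neq 0\}}$. For the last term, $g(\tau,\xi_u)\mathbb{I}_{\{u<\tau\}}$ is $\mathcal{F}^{\xi}_{t,+\infty}$-measurable, so the Markov property reduces to $\mathbb{E}[\,\cdot\,|\xi_t]$, and Proposition \ref{propextensionxiu}$(ii)$ applies with $\mathfrak{G}_{t,u}(r,\xi_t)$; using the Markov form of the regular conditional law of $\xi^r_u$ given $\xi^r_t$ from Proposition \ref{propximarkovandcovariance}$(ii)$, equation \eqref{conditionalmarkov}, the kernel $\mathfrak{G}_{t,u}(r,x)$ becomes $\int_{\mathbb{R}} g(r,y)\,p\!\left(\frac{B_{u,r}B_{t,u}}{B_{t,r}},y,\frac{B_{u,r}}{B_{t,r}}x\right)dy$, which is exactly the claimed expression.

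The only genuinely nontrivial point is the justification of the Markov reduction $\mathbb{E}[Z \mid \mathcal{F}^{\xi,c}_t] = \mathbb{E}[Z \mid \xi_t]$ for an $\mathcal{F}^{\xi}_{t,+\infty}$-measurable integrable $Z$; this is the standard restatement of the Markov property — that past and future are conditionally independent given the present — and follows from Theorem \ref{thmxitaumarkov} by a monotone-class argument, first for $Z$ of the form $h(\xi_{t_1},\dots,\xi_{t_n})$ with $t \leq t_1 < \cdots < t_n$ via Corollary \ref{Markovxi}, then extended to all bounded $\mathcal{F}^{\xi}_{t,+\infty}$-measurable $Z$, and finally to the integrable case by truncation. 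I expect this to be the main obstacle only in the sense of requiring care about which functionals are $\mathcal{F}^{\xi}_{t,+\infty}$-measurable after using $\{\xi_t = 0\} = \{\tau \leq t\}$ a.s.; once that bookkeeping is in place, everything else is a direct appeal to Propositions \ref{propextensionxiu} and \ref{propximarkovandcovariance}.
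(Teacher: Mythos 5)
Your proposal is correct and is essentially the argument the paper intends: the paper states this proposition without a written proof, noting only that it is ``a consequence of the Markov property of $\xi$ and Proposition \eqref{propextensionxiu}'', and your decomposition into a $\mathcal{F}^{\xi,c}_{t}$-measurable part on $\{\tau\leq t\}$ and a $\mathcal{F}^{\xi}_{t,+\infty}$-measurable part on $\{t<\tau\}$, followed by the Markov reduction to conditioning on $\xi_t$ and an appeal to Proposition \eqref{propextensionxiu} together with the Markov form \eqref{conditionalmarkov} of the kernel, is exactly the pattern of the paper's proof of the preceding Proposition \eqref{thmbayesestimatemarkovtau}. Your identification of $\phi_t(r,\cdot)$ with $\phi_{\xi^r_t}(\cdot)$ and your remark on the monotone-class justification of $\mathbb{E}[Z\mid\mathcal{F}^{\xi,c}_t]=\mathbb{E}[Z\mid\xi_t]$ for future-measurable $Z$ are both consistent with what the paper leaves implicit.
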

	\begin{remark} 
		The process
		$\xi$ cannot be an homogeneous $\mathbb{F}^{\xi}$-Markov process. Indeed, Proposition \eqref{thmextensionxiumarkov} enables us to see that, for $\Gamma \in \mathcal{B}(\mathbb{R})$ and $t<u$, we have $\mathbb{P}$-a.s.,
		\begin{multline}
		 \mathbb{P}(\xi_{u}\in \Gamma| \mathcal{F}_{t}^{\xi})=\mathbb{I}_{\{0 \in \Gamma \}}\mathbb{I}_{\{\xi_t=0\}}+\mathbb{I}_{\{0 \in \Gamma \}}\int_{(t,u]}\phi_{t}(r,\xi_{t})\mathbb{P}_{\tau}(dr)\mathbb{I}_{\{\xi_t \neq 0\}}\\
		 +\int_{(u,+\infty)}\int_{\Gamma}p\left( \frac{B_{u,r}B_{t,u}}{B_{t,r}},y,\frac{B_{u,r}}{B_{t,r}}\xi_t\right) dy \phi_{t}(r,\xi_{t})\mathbb{P}_{\tau}(dr)\mathbb{I}_{\{\xi_t\neq 0\}} ,\label{equationnonhomogenemarkov}	
		\end{multline}
	 which is clear that it doesn't depend only on $u - t$.
	\end{remark}

\begin{center}
	\section{Markov property with respect to $\mathbb{F}^{\xi,c}_+$}	\label{sectionmarkovpropertyfiltrationcontinue}
\end{center}

We have established, in the previous section, the Markov property of $\xi$ with respect to its completed natural filtration $\mathbb{F}^{\xi,c}$. In this section we are interested in the the Markov property  of $\xi$ with respect to $\mathbb{F}^{\xi,c}_{+}$. It has an interesting consequence which is none other than the filtration $\mathbb{F}^{\xi,c}$ satisfies the usual conditions of completeness and right-continuity.\\
Assume that the function $q$ defined in \eqref{covrep} satisfies the following condition:
\begin{hy}\label{hypconv}
	\begin{equation}
	\underset{t>0}{\inf}\,\,q(t)=\alpha>0.\label{equationhymarkovxi+}
	\end{equation}
\end{hy}
\begin{remark}\label{roqconv}
As a consequence of the above hypothesis, for any decreasing sequence of strictly positive real numbers $(t_{n})_{n\in \mathbb{N}}$ converging to $0$,  we have
\[
\underset{n\rightarrow+\infty}{\lim}\,\dfrac{\rho(t_{n})}{q(t_{n})}=0.
\]
\end{remark}
\begin{theorem}\label{thmmarkovpropertyfiltrationcontinue} 
Assume that Assumptions \eqref{hy1}, \eqref{hyindependent}, \eqref{hymarkov} and \eqref{roqconv} hold. Then the  process $\xi$ is a Markov process with respect to $\mathbb{F}^{\xi,c}_{+}$.
	\end{theorem}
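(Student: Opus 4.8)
The plan is to deduce the statement from the $\mathbb{F}^{\xi,c}$-Markov property of $\xi$ established above (Theorem~\eqref{thmxitaumarkov} together with the remark following it) by a reverse-martingale passage to the limit $s\downarrow t$, the only genuinely new ingredient being Assumption~\eqref{hypconv}, used through Remark~\eqref{roqconv}, which is needed to control the conditioning on $\xi_{s}$ as $s\downarrow 0$. By the characterisation of the Markov property (Theorem~1.3 in Blumenthal and Getoor~\cite{BG}) and the fact that $\xi$ is already $\mathbb{F}^{\xi,c}$-Markov, it suffices to prove that for all $0\le t<u$ and every bounded Borel $g:\mathbb{R}\to\mathbb{R}$ one has $\mathbb{E}[g(\xi_{u})\mid\mathcal{F}^{\xi,c}_{t^{+}}]=\mathbb{E}[g(\xi_{u})\mid\xi_{t}]$, $\mathbb{P}$-a.s. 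Fix $s_{n}\downarrow t$ with $t<s_{n}<u$: the reverse martingale $\big(\mathbb{E}[g(\xi_{u})\mid\mathcal{F}^{\xi,c}_{s_{n}}]\big)_{n}$ converges $\mathbb{P}$-a.s. and in $L^{1}$ to $\mathbb{E}[g(\xi_{u})\mid\bigcap_{s>t}\mathcal{F}^{\xi,c}_{s}]=\mathbb{E}[g(\xi_{u})\mid\mathcal{F}^{\xi,c}_{t^{+}}]$, while by the $\mathbb{F}^{\xi,c}$-Markov property and Corollary~\eqref{corconditionaldistribution} (formula~\eqref{promarkovonepar}),
\begin{multline*}
\mathbb{E}[g(\xi_{u})\mid\mathcal{F}^{\xi,c}_{s_{n}}]=\mathbb{E}[g(\xi_{u})\mid\xi_{s_{n}}]=g(0)\Big(\mathbb{I}_{\{\xi_{s_{n}}=0\}}+\mathbb{I}_{\{\xi_{s_{n}}\ne0\}}\dint_{(s_{n},u]}\phi_{\xi_{s_{n}}^{r}}(\xi_{s_{n}})\,\mathbb{P}_{\tau}(dr)\Big)\\
+\mathbb{I}_{\{\xi_{s_{n}}\ne0\}}\dint_{(u,+\infty)}K_{s_{n},u}(r,\xi_{s_{n}})\,\phi_{\xi_{s_{n}}^{r}}(\xi_{s_{n}})\,\mathbb{P}_{\tau}(dr),
\end{multline*}
with $\phi$ and $K$ given explicitly by \eqref{phinewexp} and \eqref{Hcondlaw}. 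Thus everything reduces to computing the $\mathbb{P}$-a.s. limit of the right-hand side and recognising it as a $\sigma(\xi_{t})\vee\mathcal{N}_{P}$-measurable version of $\mathbb{E}[g(\xi_{u})\mid\xi_{t}]$, using that $s\mapsto\xi_{s}(\omega)$ is continuous (hence $\xi_{s_{n}}\to\xi_{t}$) and that $\{\xi_{s}=0\}=\{\tau\le s\}$ up to a $\mathbb{P}$-null set.

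For $t>0$ I would split according to $\{\xi_{t}=0\}$ and $\{\xi_{t}\ne0\}$. On $\{\xi_{t}=0\}=\{\tau\le t\}$ we have $\xi_{s_{n}}=0$ and $\xi_{u}=0$, so each term is $g(0)$, which is the value of $\mathbb{E}[g(\xi_{u})\mid\xi_{t}]$ there. On $\{\xi_{t}\ne0\}=\{\tau>t\}$ one checks, using that $\mathbb{P}(\xi_{s}=0,\tau>s)=0$ for each rational $s$, that $\xi_{s_{n}}\ne0$ for $n$ large; then $\phi_{\xi_{s_{n}}^{r}}(\xi_{s_{n}})\to\phi_{\xi_{t}^{r}}(\xi_{t})$ and $K_{s_{n},u}(r,\xi_{s_{n}})\to K_{t,u}(r,\xi_{t})$ by joint continuity in $(s,x)$, for $x\ne0$, of the explicit Gaussian formulae \eqref{conditionalmarkov}, \eqref{phimarkdensity} and \eqref{phinewexp}, and the interchange with $\mathbb{P}_{\tau}(dr)$ follows from Scheff\'e's lemma, since the maps $r\mapsto\phi_{\xi_{s_{n}}^{r}}(\xi_{s_{n}})\,\mathbb{I}_{(s_{n},+\infty)}(r)$ are nonnegative $\mathbb{P}_{\tau}$-densities of total mass $1$ converging pointwise, together with $|K_{s_{n},u}|\le\sup|g|$. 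The limit is exactly the expression of Corollary~\eqref{corconditionaldistribution} evaluated at $(t,\xi_{t})$, i.e. $\mathbb{E}[g(\xi_{u})\mid\xi_{t}]$, which is $\sigma(\xi_{t})$-measurable.

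For $t=0$ one has $\xi_{0}=0$, so $\sigma(\xi_{0})\vee\mathcal{N}_{P}$ is $\mathbb{P}$-trivial and the limit must be identified with the constant $\mathbb{E}[g(\xi_{u})]$; this is where Assumption~\eqref{hypconv} is indispensable. From \eqref{Bdef} and Remark~\eqref{roqconv} we get $\rho(s_{n})\to0$ and $\rho(s_{n})/q(s_{n})\to0$, hence $B_{s_{n},r}\sim\rho(r)q(s_{n})$ and $B_{s_{n},s}\sim\rho(s)q(s_{n})$; since in addition $q\ge\alpha>0$ while $\xi_{s_{n}}\to0$, the Gaussian exponents appearing in \eqref{phinewexp} and in $K_{s_{n},u}$ tend to $0$ (for instance $\xi_{s_{n}}^{2}/(\rho(s)q(s_{n}))\le\xi_{s_{n}}^{2}/(\alpha\rho(s))\to0$), so that $\phi_{\xi_{s_{n}}^{r}}(\xi_{s_{n}})\to1$ and $K_{s_{n},u}(r,\xi_{s_{n}})\to\int_{\mathbb{R}}g(y)\,p\big(\tfrac{\rho(u)B_{u,r}}{\rho(r)},y,0\big)dy=\mathbb{E}[g(\xi_{u}^{r})]$. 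Passing to the limit (Scheff\'e for the first integral, dominated convergence for the second) yields $g(0)\,\mathbb{P}(\tau\le u)+\dint_{(u,+\infty)}\mathbb{E}[g(\xi_{u}^{r})]\,\mathbb{P}_{\tau}(dr)=\mathbb{E}[g(\xi_{u})]$, using $\xi_{u}=0$ on $\{\tau\le u\}$ and \eqref{condlawresptau} on $\{\tau>u\}$; being constant, this is the desired $\sigma(\xi_{0})\vee\mathcal{N}_{P}$-measurable version of $\mathbb{E}[g(\xi_{u})\mid\xi_{0}]$.

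Combining the three cases gives $\mathbb{E}[g(\xi_{u})\mid\mathcal{F}^{\xi,c}_{t^{+}}]=\mathbb{E}[g(\xi_{u})\mid\xi_{t}]$ for all $0\le t<u$ and all bounded Borel $g$, which by the reduction above yields the $\mathbb{F}^{\xi,c}_{+}$-Markov property of $\xi$. I expect the main obstacle to be the case $t=0$: one must justify, \emph{simultaneously}, the vanishing of the Gaussian exponents and the exchanges of limit and $\mathbb{P}_{\tau}$-integration as $s_{n}\downarrow0$ and $\xi_{s_{n}}\to0$, and this is precisely where the hypothesis $\inf_{t>0}q(t)>0$ (equivalently $\rho(s_{n})/q(s_{n})\to0$) is forced; the potential tail contributions as $r,s\to+\infty$ are absorbed by the fact that the $\phi$'s are genuine probability densities, whereas the case $t>0$ is a comparatively routine continuity-plus-Scheff\'e argument needing no extra assumption.
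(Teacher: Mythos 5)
Your overall strategy coincides with the paper's: reduce to $\mathbb{E}[g(\xi_u)\mid\mathcal{F}^{\xi,c}_{t^+}]=\mathbb{E}[g(\xi_u)\mid\xi_t]$, use the reverse-martingale limit along $s_n\downarrow t$ together with the already established $\mathbb{F}^{\xi,c}$-Markov property, and then compute $\lim_n\mathbb{E}[g(\xi_u)\mid\xi_{s_n}]$ from the explicit formula \eqref{promarkovonepar}, treating $t>0$ and $t=0$ separately. Your $t>0$ argument is essentially the paper's (Scheff\'e in place of the paper's dominated convergence for the outer integral is a legitimate variant, though note that the pointwise convergence $\phi_{\xi_{s_n}^{r}}(\xi_{s_n})\to\phi_{\xi_t^{r}}(\xi_t)$ itself already requires passing to the limit inside the denominator integral $\int_{(s_n,+\infty)}\varphi_{\xi_{s_n}^{s}}(\xi_{s_n})\,\mathbb{P}_\tau(ds)$, for which the paper establishes the uniform bound \eqref{varphiconvergence} by a compactness argument; you should not present this as following from continuity alone).

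The genuine gap is in the case $t=0$. Your direct computation of the limit (the asymptotics $B_{s_n,s}\sim\rho(s)q(s_n)$, the vanishing of the Gaussian exponents, and the Scheff\'e/dominated-convergence exchanges) is only valid when $\tau$ is bounded away from $0$, i.e.\ $\mathbb{P}(\tau>\varepsilon)=1$ for some $\varepsilon>0$. Indeed, the key cancellation
\[
\dfrac{\rho(r)}{\rho(s_{n})\,B_{s_{n},r}}-\dfrac{\rho(s)}{\rho(s_{n})\,B_{s_{n},s}}=
\dfrac{\rho(s)/q(s)-\rho(r)/q(r)}{q^{2}(s_{n})\bigl(\rho(r)/q(r)-\rho(s_{n})/q(s_{n})\bigr)\bigl(\rho(s)/q(s)-\rho(s_{n})/q(s_{n})\bigr)}
\]
is controlled uniformly only because $\rho(s)/q(s)-\rho(s_n)/q(s_n)\geq\rho(\varepsilon)/q(\varepsilon)-\rho(s_n)/q(s_n)$ stays bounded away from $0$ for $s$ in the support of $\mathbb{P}_\tau$; if that support accumulates at $0$, the integration variable $s$ can be arbitrarily close to $s_n$, the factors degenerate, and neither $\phi_{\xi_{s_n}^{r}}(\xi_{s_n})\to1$ nor the interchange of limit and $\mathbb{P}_\tau$-integration is justified by your bounds. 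You identify the danger at $r,s\to+\infty$ but miss the real one at $s\downarrow s_n$. The paper closes this by a separate argument you omit entirely: it first proves the result under $\mathbb{P}(\tau>\varepsilon)=1$, then, for general $\tau>0$, introduces the truncated time $\tau_\varepsilon=\tau\vee\varepsilon$ and the process $\xi^{\tau_\varepsilon}$, which coincides with $\xi$ on $\{\tau>\varepsilon\}$, deduces from the restricted case that $\mathcal{F}^{\xi^{\tau_\varepsilon}}_{0^+}$ is $\mathbb{P}$-trivial, and lets $\varepsilon\downarrow0$ to conclude that $\mathcal{F}^{\xi}_{0^+}$ itself is $\mathbb{P}$-trivial, which is what makes $\mathbb{E}[g(\xi_u)\mid\mathcal{F}^{\xi,c}_{0^+}]$ equal to the constant $\mathbb{E}[g(\xi_u)]$. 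Without this (or an equivalent) step your proof does not cover the general hypothesis $\mathbb{P}(\tau>0)=1$.
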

\begin{proof}
It is sufficient to prove that for any $0\leq t<u$ and any function bounded continuous $g$ we have
\begin{equation}
	\mathbb{E}[g(\xi_{u})|\mathcal{F}^{\xi,c}_{t+}]=\mathbb{E}[g(\xi_{u})|\xi_{t}],~~ \mathbb{P}-a.s.\label{equationmarkovrct>0}
	\end{equation}
	  Let $(t_{n})_{n\in \mathbb{N}}$
	be a decreasing sequence of strictly positive real numbers converging to $t$: that is $0 \leq t <...< t_{n+1} < t_{n} < u $, $t_{n} \searrow t$ as $n \longrightarrow +\infty $. Since $g$ is bounded and $\mathcal{F}_{t+}^{\xi,c}=\underset{n}{\cap}\mathcal{F}_{t_{n}}^{\xi,c}$ then,  $\mathbb{P}$-a.s., we have
	\begin{equation}
	\mathbb{E}[g(\xi_{u})|\mathcal{F}^{\xi,c}_{t+}]=\lim\limits_{n\longmapsto +\infty}\mathbb{E}[g(\xi_{u})|\mathcal{F}^{\xi,c}_{t_n}].
	\end{equation}
	It follows from the Markov property of $\xi$ with respect to $\mathbb{F}^{\xi,c}$ that 	
 \begin{equation}
	\mathbb{E}[g(\xi_{u})|\mathcal{F}^{\xi,c}_{t+}]=\lim\limits_{n\longmapsto +\infty}\mathbb{E}[g(\xi_{u})|{\xi}_{t_n}],~~\mathbb{P}-a.s.
	\end{equation}
It remains to prove that
	\begin{equation}
	\lim\limits_{n \longrightarrow +\infty} \mathbb{E}[g(\xi_{u})|\xi_{t_{n}}]=\mathbb{E}[g(\xi_{u})|\xi_{t}],~~\mathbb{P}-a.s.\label{equationcontdemarkovrc}
	\end{equation}
The proof is splitted into two parts. In the first one we show the statment \eqref{equationmarkovrct>0}  for $t > 0$, while in the second part we consider the case $t = 0$.

Let $t>0$. We begin by noticing that from \eqref{promarkovonepar}, $\mathbb{P}$-a.s., we have
	
\begin{align}
			\mathbb{E}[g(\xi_u)|\xi_{t_{n}}]=& g(0) \left( \mathbb{I}_{\{\xi_{t_{n}}=0\}} 	+ \dint_{(t_{n},u]}\,\phi_{\xi_{t_{n}}^{r}}(\xi_{t_{n}})\mathbb{P}_{\tau}(dr)\mathbb{I}_{\{\xi_{t_{n}}\neq0\}} \right)\nonumber  \\
	\nonumber \\	& +\dint_{(u,+\infty)}\,K_{t_{n},u}(r,\xi_{t_{n}})\phi_{\xi_{t_{n}}^{r}}(\xi_{t_{n}})\,\mathbb{P}_{\tau}(dr)\,\mathbb{I}_{\{\xi_{t_{n}}\neq0\}} \nonumber \\ 
	\nonumber \\ & = g(0)\left( \mathbb{I}_{\{\tau \leq  t_{n}\}}+\dint_{(t_{n},u]}\phi_{\xi_{t_{n}}^{r}}(\xi_{t_{n}})\mathbb{P}_{\tau}(dr)\mathbb{I}_{\{t_{n}<\tau \}} \right) \nonumber \\ 
	\nonumber \\ & +\dint_{(u,+\infty)}\,K_{t_{n},u}(r,\xi_{t_{n}})\phi_{\xi_{t_{n}}^{r}}(\xi_{t_{n}})\,\mathbb{P}_{\tau}(dr)\,\mathbb{I}_{\{t_{n}<\tau\}}. \nonumber
	\end{align}
Since $\lim\limits_{n \longrightarrow +\infty}\, \mathbb{I}_{\{t_{n}<\tau \}}=\mathbb{I}_{\{t<\tau \}} $ then assertion \eqref{equationcontdemarkovrc} will be established if we show, $\mathbb{P}$-a.s on $\left\{ t<\tau\right\} $, that
	\begin{equation}
	\lim\limits_{n \longrightarrow +\infty}\dint_{(t_{n},u]}\,\phi_{\xi_{t_{n}}^{r}}(\xi_{t_{n}})\mathbb{P}_{\tau}(dr)=\dint_{(t,u]}\,\phi_{\xi_{t}^{r}}(\xi_{t})\mathbb{P}_{\tau}(dr),\label{equationlimitphitnxi-phitxi}
	\end{equation}
	and
	\begin{equation}
	\lim\limits_{n \longrightarrow +\infty}\dint_{(u,+\infty)}\,K_{t_{n},u}(r,\xi_{t_{n}})\phi_{\xi_{t_{n}}^{r}}(\xi_{t_{n}})\,\mathbb{P}_{\tau}(dr) =\dint_{(u,+\infty)}\,K_{t,u}(r,\xi_{t})\phi_{\xi_{t}^{r}}(\xi_{t})\,\mathbb{P}_{\tau}(dr). \label{equationlimitphitnxiG-phitxiG}
	\end{equation}
We start by proving	 assertion \eqref{equationlimitphitnxi-phitxi}. The integral on the left-hand side of \eqref{equationlimitphitnxi-phitxi} can be rewritten as
	$$ \dint_{(t_{n},u]}\phi_{\xi_{t_{n}}^{r}}(\xi_{t_{n}})\,\mathbb{P}_{\tau}(dr)=\frac{\dint_{(t_{n},u]}\varphi_{\xi_{t_{n}}^{r}}(\xi_{t_{n}})\,\mathbb{P}_{\tau}(dr)}{\dint_{(t_{n},+\infty)}\varphi_{\xi_{t_{n}}^{s}}(\xi_{t_{n}})\mathbb{P}_{\tau}(ds)}. $$
	First let us remark that the function 
\[
(t,r,x)\longrightarrow\varphi_{\xi_{t}^{r}}(x)=p\left(\dfrac{\rho(t)\,B_{t,r}}{\rho(r)},x,0\right)\mathbb{I}_{\left\{ t<r\right\} }
\]
defined on $\ensuremath{(0,+\infty)\times(0,+\infty)\times\mathbb{R}\backslash\{0\}}$
is continuous. Hence, $\mathbb{P}$-a.s on $\left\{ t<\tau\right\} $,
we have 
\begin{equation}
\underset{n\rightarrow+\infty}{\lim}\,\varphi_{\xi_{t_{n}}^{r}}(\xi_{t_{n}})=\varphi_{\xi_{t}^{r}}(\xi_{t}).\label{phixiconv}
\end{equation}
 Now since
\[
\dfrac{\rho(t)\,B_{t,r}}{\rho(r)}=\rho(t)\,q(t)\left[1-\dfrac{\rho(t)}{q(t)}\,\dfrac{q(r)}{\rho(r)}\right],
\]
 and the $\rho$ and $q$ are continuous such that $\ensuremath{q/\rho}$ is decreasing on $\ensuremath{\mathbb{R}_{+}^{*}}$
then for any compact subset $\mathcal{K}$ of $(0,+\infty)\times\mathbb{R}\backslash\{0\}$
it yields 
\[
\underset{(t,x)\in\mathcal{K},r>0}{\sup}\,\varphi_{\xi_{t}^{r}}(x)<+\infty.
\]
 It results, $\mathbb{P}$-a.s on $\left\{ t<\tau\right\} $, that
\begin{equation}
\underset{n\in\mathbb{N},r>t}{\sup}\,\varphi_{\xi_{t_{n}}^{r}}(\xi_{t_{n}})<+\infty.\label{varphiconvergence}
\end{equation}
	We conclude assertion \eqref{equationlimitphitnxi-phitxi} from the Lebesgue dominated convergence theorem.\\
	Now let us prove \eqref{equationlimitphitnxiG-phitxiG}. Recall that the
	function $K_{t_{n},u}(r,\xi_{t_{n}})$ is given by 
	$$K_{t_{n},u}(r,\xi_{t_{n}})=\dint_{\mathbb{R}} g(y)p(\sigma_{t_{n},u}^r,y,\mu_{t_{n},u}^r\,\xi_{t_{n}})\lambda(dy).$$
	We write it with the help of \eqref{conditionalmarkov} as
\begin{equation}
K_{t_{n},u}(r,\xi_{t_{n}})=\dint_{\mathbb{R}} g(y) p\left(\dfrac{B_{t_{n},u}B_{u,r}}{B_{t_{n},r}},y,\dfrac{B_{u,r}}{B_{t_{n},r}}\xi_{t_{n}}\right) \lambda(dy). \label{newK}
\end{equation}
Since $g$ is bounded we deduce that $K_{t_{n},u}(r,\xi_{t_{n}})$ is bounded. Moreover we obtain from the weak convergence of Gaussian measures that  
\[
\underset{n\rightarrow+\infty}{\lim}\, K_{t_{n},u}(r,\xi_{t_{n}})=K_{t,u}(r,\xi_{t}),
\] 
$\mathbb{P}$-a.s on $\left\{ t<\tau\right\} $.
Combining the fact that $K_{t_{n},u}(r,\xi_{t_{n}})$ is bounded, \eqref{phixiconv} and \eqref{varphiconvergence} assertion \eqref{equationlimitphitnxiG-phitxiG} is then derived from the Lebesgue dominated convergence theorem.
	 
	 Next, we investigate the second part of the proof, that is the case $t = 0$. It will be carried out in two steps. In the first one we assume that there exists $\varepsilon > 0$ such that 
	\begin{equation}\label{tauminor}
	\mathbb{P}(\tau > \varepsilon)=1.
	\end{equation} 
 As in the first part, it is sufficient to verify that
	\begin{equation}\label{equationlimitmarkovstep2t=0}
	\lim\limits_{n \longrightarrow +\infty} \mathbb{E}[g(\xi_{u})\vert {\xi}_{t_{n}}]=\mathbb{E}[g(\xi_{u})\vert \xi_{0}],~~\mathbb{P}-a.s.
	\end{equation}
	 Without loss of generality we assume
	$t_{n}< \varepsilon $ for all $n\in \mathbb{N}$. It is easy to see that under condition \eqref{tauminor}, 	$\mathbb{E}[g(\xi_{u})\vert\xi_{t_{n}}]$ takes the form
	\[
	\mathbb{E}[g(\xi_{u})\vert\xi_{t_{n}}]=g(0)\dint_{(\varepsilon,u]}\phi_{\xi_{t_{n}}^{r}}(\xi_{t_{n}})\mathbb{P}_{\tau}(dr)+\dint_{(u,+\infty)}\,K_{t_{n},u}(r,\xi_{t_{n}})\phi_{\xi_{t_{n}}^{r}}(\xi_{t_{n}})\,\mathbb{P}_{\tau}(dr).
	\]
On the other hand we have
\[
\mathbb{E}[g(\xi_{u})\vert\xi_{0}]=\mathbb{E}[g(\xi_{u})]=g(0)F(u)+\int_{(u,+\infty)}\int_{\mathbb{R}}g(x)p\left(\dfrac{\rho(u)\,B_{u,r}}{\rho(r)},x,0\right)\lambda(dx)\mathbb{P}_{\tau}(dr).
\]
Then in order to show \eqref{equationlimitmarkovstep2t=0} it is sufficient to prove, $\mathbb{P}$-a.s, the following
	\begin{equation}
		\lim\limits_{n\longrightarrow +\infty}\;\dint_{(\varepsilon,u]}\phi_{\xi_{t_{n}}^{r}}(\xi_{t_{n}})\mathbb{P}_{\tau}(dr)=F(u),
		\label{conas1}
	\end{equation}
	and 
	\begin{equation}
\lim\limits _{n\longrightarrow+\infty}\,\dint\limits _{(u,+\infty)}\,K_{t_{n},u}(r,\xi_{t_{n}})\phi_{\xi_{t_{n}}^{r}}(\xi_{t_{n}})\,\mathbb{P}_{\tau}(dr)=\dint\limits _{(u,+\infty)}\dint_{\mathbb{R}}\,g(y)\,p\left(\dfrac{\rho(u)\,B_{t,r}}{\rho(r)},x,0\right)\,\lambda(dx)\,\mathbb{P}_{\tau}(dr).	\label{conas2}
	\end{equation}
First let us prove that $\mathbb{P}$-a.s, for all $r>0$,
		$ \lim\limits_{n \longrightarrow +\infty} \phi_{\xi_{t_{n}}^{r}}(\xi_{t_{n}}) = 1$. Using \eqref{Bdef}, \eqref{phimarkdensity} and \eqref{tauminor} the function $\phi_{\xi_{t_{n}}^{r}}$ defined in \eqref{equationphi} can be rewritten as follows:

\[
\begin{array}{lll}
\phi_{\xi_{t_{n}}^{r}}(x)&=&\dfrac{\left(\dfrac{\rho(r)}{\rho(t_{n})\,B_{t_{n},r}}\right)^{\frac{1}{2}} \exp \left(-\dfrac{1}{2}\dfrac{\rho(r)}{\rho(t_{n})\,B_{t_{n},r}}\,x^{2}\right)}{\dint_{(\varepsilon,+\infty)}\left(\dfrac{\rho(s)}{\rho(t_{n})\,B_{t,s}}\right)^{\frac{1}{2}}\exp\left(-\dfrac{1}{2}\dfrac{\rho(s)}{\rho(t_{n})\,B_{t_{n},s}}\,x^{2}\right)\mathbb{P}_{\tau}(ds)},\\ \\
& = & 
\dfrac{\left(1-\dfrac{\rho(t_{n})/q(t_{n})}{\rho(r)/q(r)}\right)^{-\frac{1}{2}}}{\dint_{(\varepsilon,+\infty)}\left(1-\dfrac{\rho(t_{n})/q(t_{n})}{\rho(s)/q(s)}\right)^{-\frac{1}{2}}\exp\left(\dfrac{1}{2}\left(\dfrac{\rho(r)}{\rho(t_{n})\,B_{t_{n},r}}-\dfrac{\rho(s)}{\rho(t_{n})\,B_{t_{n},s}}\right)\,x^{2}\right)\mathbb{P}_{\tau}(ds)},\end{array}
\]
where $x\in \mathbb{R}$ and $r\in (\varepsilon,+\infty)$.
Since $s,r\geq t_{n}$ it follows from \eqref{Bdef} that	
\[
\dfrac{\rho(r)}{\rho(t_{n})\,B_{t_{n},r}}-\dfrac{\rho(s)}{\rho(t_{n})\,B_{t_{n},s}}=
\dfrac{\bigg(\rho(s)/q(s)-\rho(r)/q(r)\bigg)}{q^{2}(t_{n})\bigg(\rho(r)/q(r)-\rho(t_{n})/q(t_{n})\bigg)\bigg(\rho(s)/q(s)-\rho(t_{n})/q(t_{n})\bigg)}.
\]
By a monotony argument it is simple to see that  
\begin{equation}
1\leq \dfrac{1}{\left(1-\dfrac{\rho(t_{n})/q(t_{n})}{\rho(s)/q(s)}\right)}\leq\dfrac{1}{\left(1-\dfrac{\rho(t_{n})/q(t_{n})}{\rho(\varepsilon)/q(\varepsilon)}\right)}\leq\dfrac{1}{\left(1-\dfrac{\rho(t_{1})/q(t_{1})}{\rho(\varepsilon)/q(\varepsilon)}\right)},\,\,
\text{for}\,\, s\in (\varepsilon,+\infty),\label{numest}
\end{equation}
and 
\[
\dfrac{\bigg(\rho(s)/q(s)-\rho(r)/q(r)\bigg)}{\bigg(\rho(s)/q(s)-\rho(t_{n})/q(t_{n})\bigg)}\leq\left\lbrace\begin{array}{lll}
0 &\text{for}  & s\in\left(\varepsilon,r\right)\\
\\
1 & \text{for} & s\in\left(r,+\infty\right).
\end{array}\right.
\]
So for any $s\in (\varepsilon,+\infty)$ we have 
\[
\begin{array}{c}
\left(1-\dfrac{\rho(t_{n})/q(t_{n})}{\rho(s)/q(s)}\right)^{-\frac{1}{2}}\exp\left(\dfrac{1}{2}\left(\dfrac{\rho(r)}{\rho(t_{n})\,B_{t_{n},r}}-\dfrac{\rho(s)}{\rho(t_{n})\,B_{t_{n},s}}\right)\,x^{2}\right)\leq\\
\\
\left(1-\dfrac{\rho(t_{1})/q(t_{1})}{\rho(\varepsilon)/q(\varepsilon)}\right)^{-\frac{1}{2}}\exp\left(\dfrac{1}{2}\,\dfrac{x^{2}}{q^{2}(t_{n})\bigg(\rho(r)/q(r)-\rho(t_{n})/q(t_{n})\bigg)}\,\right).
\end{array}
\]
 Moreover it follows from \eqref{hypconv} and Remark \eqref{roqconv} that
\begin{equation}
\underset{n\in\mathbb{N}}{\sup}\;\;\dfrac{1}{q^{2}(t_{n})\bigg(\rho(r)/q(r)-\rho(t_{n})/q(t_{n})\bigg)}<+\infty.
\end{equation}
Now since $\xi_{t_{n}}$ converges almost surely to $0$ then almost surely, for all $r>0$, the Lebesgue dominated convergence theorem leads to 

\[
\underset{n\rightarrow+\infty}{\lim}\,\dint_{(\varepsilon,+\infty)}\left(1-\dfrac{\rho(t_{n})/q(t_{n})}{\rho(s)/q(s)}\right)^{-\frac{1}{2}}\exp\left(\dfrac{1}{2}\left(\dfrac{\rho(r)}{\rho(t_{n})\,B_{t_{n},r}}-\dfrac{\rho(s)}{\rho(t_{n})\,B_{t_{n},s}}\right)\,\xi_{t_{n}}^{2}\right)\mathbb{P}_{\tau}(ds) =1.
\]
As a consequence we obtain almost surely, for all $r>0$,  $ \lim\limits_{n \longrightarrow +\infty} \phi_{\xi_{t_{n}}^{r}}(\xi_{t_{n}}) = 1$.

Secondly let us prove the uniform boundedness of the sequence $\phi_{\xi_{t_{n}}^{r}}(\xi_{t_{n}})$. That is $\mathbb{P}$-a.s, the function $r\longrightarrow \phi_{\xi_{t_{n}}^{r}}(\xi_{t_{n}})$ is $\mathbb{P}_{\tau}$-a.s. uniformly bounded. Let $s\in (\varepsilon,+\infty)$. First it follows from \eqref{numest} that 

\[
\left(1-\dfrac{\rho(t_{n})/q(t_{n})}{\rho(s)/q(s)}\right)^{-\frac{1}{2}}\geq 1.
\]
 Moreover using  \eqref{hypconv} and a monotony argument we obtain 
\[
\dfrac{\bigg(\rho(s)/q(s)-\rho(r)/q(r)\bigg)}{q^{2}(t_{n})\bigg(\rho(r)/q(r)-\rho(t_{n})/q(t_{n})\bigg)\bigg(\rho(s)/q(s)-\rho(t_{n})/q(t_{n})\bigg)}\geq-\dfrac{1}{\alpha^{2}\bigg(\rho(\varepsilon)/q(\varepsilon)-\rho(t_{1})/q(t_{1})\bigg)}\mathbb{I}_{\left(\varepsilon,r\right)}(s)
\]
By combining this we arrive at
\[
\begin{array}{c}
\left(1-\dfrac{\rho(t_{n})/q(t_{n})}{\rho(r)/q(r)}\right)^{-\frac{1}{2}}\exp\left(\dfrac{1}{2}\left(\dfrac{\rho(r)}{\rho(t_{n})\,B_{t_{n},r}}-\dfrac{\rho(s)}{\rho(t_{n})\,B_{t_{n},s}}\right)\,x^{2}\right)\geq\\
\\
\exp\left(-\dfrac{x^{2}}{\alpha^{2}\bigg(\rho(\varepsilon)/q(\varepsilon)-\rho(t_{1})/q(t_{1})\bigg)}\right)\mathbb{I}_{\left(\varepsilon,r\right)}(s)
+\mathbb{I}_{\left(r,+\infty\right)}(s)\geq \exp\left(-\dfrac{x^{2}}{\alpha^{2}\bigg(\rho(\varepsilon)/q(\varepsilon)-\rho(t_{1})/q(t_{1})\bigg)}\right).
\end{array}
\]
Hence 
\[
\begin{array}{c}
\dint_{(\varepsilon,+\infty)}\left(1-\dfrac{\rho(t_{n})/q(t_{n})}{\rho(s)/q(s)}\right)^{-\frac{1}{2}}\exp\left(\dfrac{1}{2}\left(\dfrac{\rho(r)}{\rho(t_{n})\,B_{t_{n},r}}-\dfrac{\rho(s)}{\rho(t_{n})\,B_{t_{n},s}}\right)\,\xi_{t_{n}}^{2}\right)\mathbb{P}_{\tau}(ds)\geq\\
\\
\exp\left(-\dfrac{\underset{n\in\mathbb{N}}{\sup}\,\,\xi_{t_{n}}^{2}}{\alpha^{2}\bigg(\rho(\varepsilon)/q(\varepsilon)-\rho(t_{1})/q(t_{1})\bigg)}\right).
\end{array}
\]
Now using again \eqref{numest} for $r\in (\varepsilon,+\infty)$ it is easy to conclude that  		
		
\[
\underset{r\in(\varepsilon,+\infty)}{\sup}\underset{n\in\mathbb{N}}{\sup}\,\,\phi_{\xi_{t_{n}}^{r}}(\xi_{t_{n}})\leq \left(1-\dfrac{\rho(t_{1})/q(t_{1})}{\rho(\varepsilon)/q(\varepsilon)}\right)^{-\frac{1}{2}}\exp\left(\dfrac{\underset{n\in\mathbb{N}}{\sup}\,\,\xi_{t_{n}}^{2}}{\alpha^{2}\bigg(\rho(\varepsilon)/q(\varepsilon)-\rho(t_{1})/q(t_{1})\bigg)}\right)<+\infty,
\]		
$\mathbb{P}$ almost surely. As $ \mathbb{P}(\tau > \varepsilon)=1$ we conclude that $\mathbb{P}$-a.s, the function $r\longrightarrow \phi_{\xi_{t_{n}}^{r}}(\xi_{t_{n}})$ is $\mathbb{P}_{\tau}$-a.s uniformly bounded.
Consequently \eqref{conas1} follows from a simple application of the Lebesgue dominated convergence theorem.
In the same way as in the first case ($t>0$) we obtain from the weak convergence of Gaussian measures that  
\[
\begin{array}{ll}
\underset{n\rightarrow+\infty}{\lim}\,K_{t_{n},u}(r,\xi_{t_{n}}) & =\underset{n\rightarrow+\infty}{\lim}\,\dint_{\mathbb{R}}g(x)p\left(\dfrac{B_{t_{n},u}B_{u,r}}{B_{t_{n},r}},x,\dfrac{B_{u,r}}{B_{t_{n},r}}\xi_{t_{n}}\right)\lambda(dx)\\
\\
 & =\dint_{\mathbb{R}}\,g(x)\,p\left(\dfrac{\rho(u)\,B_{t,r}}{\rho(r)},x,0\right)\,\lambda(dx)
\end{array}
\] 
$\mathbb{P}$ almost surely. Indeed we have 
\[
\dfrac{B_{t_{n},u}}{B_{t_{n},r}}=\dfrac{\rho(u)-\left(\rho(t_{n})/q(t_{n})\right)q(u)}{\rho(r)-\left(\rho(t_{n})/q(t_{n})\right)q(r)},
\]
and 
\[
0\leq \dfrac{1}{B_{t_{n},r}}\leq\dfrac{1}{\alpha^{2}\bigg(\rho(r)/q(r)-\rho(t_{n})/q(t_{n})\bigg)}.
\]
Then it follows from Remark \eqref{roqconv} that  
\[
\underset{n\rightarrow+\infty}{\lim}\,\dfrac{B_{t_{n},u}}{B_{t_{n},r}}=\dfrac{\rho(u)}{\rho(r)},
\]
and 
\[
0\leq \underset{n\in \mathbb{N}}{\sup}\, \dfrac{1}{B_{t_{n},r}}<+\infty.
\]
Therefore we obtain $\underset{n\rightarrow+\infty}{\lim}\,\dfrac{B_{u,r}}{B_{t_{n},r}}\xi_{t_{n}}=0$ almost surely and $\underset{n\rightarrow+\infty}{\lim}\,\dfrac{B_{t_{n},u}B_{u,r}}{B_{t_{n},r}}=\dfrac{\rho(u)B_{u,r}}{\rho(r)}$. To get \eqref{conas2} it suffices to note that  
$$\underset{r\in(\varepsilon,+\infty)}{\sup}\underset{n\in\mathbb{N}}{\sup}\,\,K_{t_{n},u}(r,\xi_{t_{n}})\phi_{\xi_{t_{n}}^{r}}(\xi_{t_{n}})<+\infty,$$ 
and moreover apply the Lebesgue dominated convergence theorem.

Finally, we have to consider the general case, that is $\mathbb{P}(\tau >0)=1$. In order to prove the Markov property of $\xi$ with respect to $\mathbb{F}^{\xi,c}_{+}$ at $t = 0$ it is sufficient to show that $\mathcal{F}_{0+}^{\xi,c}$ is $\mathbb{P}$-trivial. This amounts to prove that $\mathcal{F}_{0+}^{\xi}$ is $\mathbb{P}$-trivial since $\mathcal{F}_{0+}^{\xi,c}=\mathcal{F}_{0+}^{\xi} \vee \mathcal{N}_{P}$. For so doing let $\varepsilon > 0$ be fixed and consider the stopping time $\tau_{\varepsilon}=\tau \vee \varepsilon$. We define the process $\xi_{t}^{\tau_{\varepsilon}}$ by $$\left\lbrace\xi_{t}^{\tau_{\varepsilon}};\,t\geq 0\right\rbrace:=\left\lbrace\xi_{t}^{r} \vert_{r=\tau \vee \varepsilon};\,t\geq 0\right\rbrace.$$  
The first remark is that the sets $(\tau_{\varepsilon}>\varepsilon)=(\tau>\varepsilon)$ are equal  and therefore the following equality of processes holds $$\xi_{\cdot}^{\tau_{\varepsilon}}\mathbb{I}_{(\tau>\varepsilon)}=\xi_{\cdot}\;\mathbb{I}_{(\tau>\varepsilon)}.$$
Then for each $A\in \mathcal{F}_{0+}^{\xi}$ there exists $B\in \mathcal{F}_{0+}^{\xi^{\tau_{\varepsilon}}}$ such that  
$$A\cap(\tau>\varepsilon)=B\cap (\tau>\varepsilon).$$
As $\mathbb{P}(\tau_{\varepsilon}>\varepsilon/2)=1$ then  according to the previous case we have that $\mathcal{F}_{0+}^{\xi^{\tau_{\varepsilon}}}$ is $\mathbb{P}$-trivial. That is $\mathbb{P}(B)=0$ or $1$. Consequently we obtain 
$$\mathbb{P}(A\cap(\tau>\varepsilon))=0\text{\,\,or\,\,}\mathbb{P}(A\cap(\tau>\varepsilon))=\mathbb{P}(\tau>\varepsilon).$$
Now if $\mathbb{P}(A)>0$, then there exists $\varepsilon > 0$ such that $\mathbb{P}(A\cap \{\tau> \varepsilon\})>0$. Therefore for all $0<\varepsilon'\leq \varepsilon$ we have 
$$\mathbb{P}(A\cap(\tau>\varepsilon'))=\mathbb{P}(\tau>\varepsilon').$$ 
Passing to the limit as $\varepsilon'$ goes to $ 0$ yields $\mathbb{P}(A\cap(\tau>0))=\mathbb{P}(\tau>0)=1$. It follows that $\mathbb{P}(A)=1$, which ends the proof.
\end{proof}	
\begin{corollary}\label{corFpsatisfiesusualcondition}
Under the Assumptions \eqref{hy1}, \eqref{hyindependent}, \eqref{hymarkov} and \eqref{roqconv}, the  filtration $\mathbb{F}^{\xi,c}$ satisfies the usual conditions of rightcontinuity and completeness.
\end{corollary}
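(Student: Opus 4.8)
The plan is to read the corollary off Theorem \ref{thmmarkovpropertyfiltrationcontinue}. Completeness of $\mathbb{F}^{\xi,c}$ holds by construction, since $\mathcal{N}_P\subseteq\mathcal{F}_t^{\xi,c}=\mathcal{F}_t^{\xi}\vee\mathcal{N}_P$ for every $t\geq 0$. Hence the only point to prove is right-continuity, i.e. $\mathcal{F}_{t+}^{\xi,c}=\mathcal{F}_t^{\xi,c}$ for all $t\geq 0$, and since trivially $\mathcal{F}_t^{\xi,c}\subseteq\mathcal{F}_{t+}^{\xi,c}$ it suffices to show $\mathcal{F}_{t+}^{\xi,c}\subseteq\mathcal{F}_t^{\xi,c}$. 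The reduction I would use is the classical one: it is enough to check that for every bounded $\mathcal{F}_\infty^{\xi,c}$-measurable random variable $Y$ the conditional expectation $\mathbb{E}[Y\mid\mathcal{F}_{t+}^{\xi,c}]$ admits an $\mathcal{F}_t^{\xi,c}$-measurable version. Indeed, granting this, any $A\in\mathcal{F}_{t+}^{\xi,c}$ satisfies $\mathbb{I}_A=\mathbb{E}[\mathbb{I}_A\mid\mathcal{F}_{t+}^{\xi,c}]$ $\mathbb{P}$-a.s., so $\mathbb{I}_A$ agrees $\mathbb{P}$-a.s. with an $\mathcal{F}_t^{\xi,c}$-measurable function, and completeness of $\mathcal{F}_t^{\xi,c}$ then forces $A\in\mathcal{F}_t^{\xi,c}$.

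To establish this measurability statement I would argue by a monotone class (functional form) argument. The family of bounded $Y$ for which $\mathbb{E}[Y\mid\mathcal{F}_{t+}^{\xi,c}]$ has an $\mathcal{F}_t^{\xi,c}$-measurable version is a vector space, stable under bounded monotone limits, containing the constants; it suffices to see it contains every product $Y=\prod_{i=1}^{n}g_i(\xi_{t_i})$ with $g_i$ bounded continuous and $0\leq t_1<\dots<t_n$, since such products together with $\mathcal{N}_P$ generate $\mathcal{F}_\infty^{\xi,c}$. For such a $Y$, write $Y=Y_{\leq t}\,Y_{>t}$ with $Y_{\leq t}=\prod_{t_i\leq t}g_i(\xi_{t_i})$, which is already $\mathcal{F}_t^{\xi,c}$-measurable, and $Y_{>t}=\prod_{t_i>t}g_i(\xi_{t_i})$; then $\mathbb{E}[Y\mid\mathcal{F}_{t+}^{\xi,c}]=Y_{\leq t}\,\mathbb{E}[Y_{>t}\mid\mathcal{F}_{t+}^{\xi,c}]$. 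Applying the $\mathbb{F}_{+}^{\xi,c}$-Markov property of $\xi$ from Theorem \ref{thmmarkovpropertyfiltrationcontinue} repeatedly — conditioning first on $\mathcal{F}_{s+}^{\xi,c}$ at the second-largest coordinate and using the tower property, then moving down the coordinates — collapses $\mathbb{E}[Y_{>t}\mid\mathcal{F}_{t+}^{\xi,c}]$ successively to $\mathbb{E}[h(\xi_{t'})\mid\mathcal{F}_{t+}^{\xi,c}]=\mathbb{E}[h(\xi_{t'})\mid\xi_t]$ for the smallest coordinate $t'>t$ and a suitable bounded Borel $h$, hence to a Borel function of $\xi_t$, which is $\mathcal{F}_t^{\xi,c}$-measurable. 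Thus $\mathbb{E}[Y\mid\mathcal{F}_{t+}^{\xi,c}]$ is $\mathcal{F}_t^{\xi,c}$-measurable, as required.

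Combining the two steps gives $\mathcal{F}_{t+}^{\xi,c}\subseteq\mathcal{F}_t^{\xi,c}$ for every $t\geq 0$, hence $\mathbb{F}^{\xi,c}$ is right-continuous, and together with the built-in completeness it satisfies the usual conditions, which is the assertion. The entire content is carried by Theorem \ref{thmmarkovpropertyfiltrationcontinue}: once the conditional law of the future of $\xi$ given $\mathcal{F}_{t+}^{\xi,c}$ coincides with that given $\xi_t$, right-continuity is automatic. I do not expect a genuine obstacle; the only points to watch are the case $t=0$, where $\sigma(\xi_0)$ is $\mathbb{P}$-trivial (as $\xi_0=0$ a.s.) so that the argument shows $\mathcal{F}_{0+}^{\xi,c}$ is $\mathbb{P}$-trivial, in agreement with the last step of the proof of Theorem \ref{thmmarkovpropertyfiltrationcontinue}, and the bookkeeping in the monotone class passage, which has to be performed for the joint generating class $\{\prod g_i(\xi_{t_i})\}$ rather than a single coordinate.
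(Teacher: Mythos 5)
Your argument is correct. The paper itself gives no proof at all here: it simply cites Blumenthal--Getoor, Ch.~I, Theorem (8.12), which is precisely the general statement that the augmented natural filtration of a process enjoying the Markov property with respect to $\mathbb{F}^{\xi,c}_{+}$ is right-continuous. What you have written is, in effect, the standard proof of that cited theorem, specialised to $\xi$: reduce right-continuity to the existence of $\mathcal{F}_t^{\xi,c}$-measurable versions of $\mathbb{E}[Y\mid\mathcal{F}_{t+}^{\xi,c}]$ for bounded $\mathcal{F}_\infty^{\xi,c}$-measurable $Y$, run a monotone class argument over finite products $\prod_i g_i(\xi_{t_i})$, and collapse the conditioning by iterating the Markov property of Theorem \ref{thmmarkovpropertyfiltrationcontinue} from the largest time index down to $t$. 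The only points worth flagging are bookkeeping ones you already anticipate: Theorem \ref{thmmarkovpropertyfiltrationcontinue} is proved for bounded continuous $g$, so you need the routine extension to bounded Borel $g$ before iterating; and in the downward induction the intermediate conditionings at times $s>t$ only require the $\mathbb{F}^{\xi,c}$-Markov property of Section \ref{sectionmarkovproperty}, the $\mathbb{F}^{\xi,c}_{+}$-version being needed only at the final step at time $t$. Your self-contained derivation buys transparency over the paper's bare citation, at the cost of length; mathematically the two routes coincide.
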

\begin{proof}
	See, e.g., [\cite{BG}, Ch. I, Theorem (8.12)]
\end{proof}

\end{document}